\documentclass[a4paper]{amsart}

\usepackage{amsmath,amsthm,amssymb,enumerate}
\usepackage{epsfig}
\usepackage{amssymb}
\usepackage{amsmath}
\usepackage{amssymb}
\usepackage{amsmath,amsthm}
\usepackage[latin1]{inputenc}
\usepackage[T1]{fontenc}
\usepackage{path}
\usepackage{ae,aecompl}
\usepackage{amsfonts}
\usepackage{amsxtra}
\usepackage{euscript,mathrsfs}
\usepackage{color}
\usepackage[left=3.4cm,right=3.4cm,top=4cm,bottom=4cm]{geometry}
\usepackage[citecolor=blue,colorlinks=true]{hyperref}
\allowdisplaybreaks

\usepackage{enumitem}
\setenumerate{label={\rm (\alph{*})}}

\usepackage{amsfonts}
\usepackage{amsxtra}

\numberwithin{equation}{section}

\newtheorem{theorem}{Theorem}[section]

\newtheorem{lemma}{Lemma}[section]

\newtheorem{corollary}{Corollary}[section]

\newtheorem{remark}{Remark}[section]

\newtheorem{definition}{Definition}[section]

\newcommand{\bfu}{u}

\newcommand{\N}{\mathbb N}

\newcommand{\E}{\mathbb E}
\newcommand{\p}{\mathbb P}

\newcommand{\F}{\mathfrak F}

\newcommand{\dd}{\mathrm d}
\newcommand{\dx}{\, \mathrm{d}x}

\newcommand{\ds}{\, \mathrm{d}s}
\newcommand{\dt}{\, \mathrm{d}t}
\newcommand{\dxt}{\,\mathrm{d}x\, \mathrm{d}t}

\newcommand{\dif}{\mathrm{d}}

\newcommand{\mf}{\mathfrak{F}}
\newcommand{\mr}{\mathbb{R}}
\newcommand{\prst}{\mathbb{P}}

\newcommand{\mt}{\mathbb{T}^2}

\DeclareMathOperator{\diver}{div}

\usepackage{amsmath,amssymb,amsfonts,amsthm,mathtools}
\usepackage{bm}
\usepackage{enumitem}
\usepackage{hyperref}
\hypersetup{colorlinks=true,linkcolor=blue,citecolor=blue,urlcolor=blue}
\usepackage{xcolor}
\allowdisplaybreaks
\usepackage{float}          
\usepackage{algorithm}      
\usepackage{algpseudocode}  
\usepackage{xcolor}
\usepackage{mdframed}
\usepackage[most]{tcolorbox}

\newmdenv[
  linewidth=0.8pt,
  linecolor=black,
  backgroundcolor=gray!5,
  roundcorner=4pt,
  skipabove=10pt,
  skipbelow=10pt
]{scheme}

\tcbset{
  authorcomment/.style={
    enhanced,
    colback=blue!3,
    colframe=blue!50!black,
    coltitle=blue!20!black,
    boxrule=0.8pt,
    arc=4pt,
    left=6pt,
    right=6pt,
    top=6pt,
    bottom=6pt,
    fonttitle=\bfseries,
    title={Author's Comment},
  }
}

\newcommand{\Vspace}{\mathbb V}
\newcommand{\Qspace}{Q}

\begin{document}

\title[Higher Order Discretization for the Stochastic Navier--Stokes Equations]{A Higher Order Discretization for the Stochastic Navier--Stokes equations with additive Noise}

\author{\v{L}ubom\'{i}r Ba\v{n}as}
\address{Department of Mathematics, Bielefeld University, 33501 Bielefeld, Germany}
\email{banas@math.uni-bielefeld.de}

\author{Dominic Breit}
\address{Faculty of Mathematics, University of Duisburg-Essen, Thea-Leymann-Str. 9, 45127 Essen, Germany}
\email{dominic.breit@uni-due.de}

\author{Abhishek Chaudhary}
\address{Mathematisches Institut der Universit\"at T\"ubingen,
                Auf der Morgenstelle 10,
                D-72076 T\"ubingen, Germany.}
\email{chaudhary@na.uni-tuebingen.de}

\author{Andreas Prohl}
\address{Mathematisches Institut der Universit\"at T\"ubingen,
                Auf der Morgenstelle 10,
                D-72076 T\"ubingen, Germany.}
\email{prohl@na.uni-tuebingen.de}

%
%

\begin{abstract}
We propose a new higher-order time discretization scheme for the stochastic Navier--Stokes equations with additive noise, where its velocity and pressure approximates
converge at strong rate $1.5$ in probability. The construction rests on its reformulation as a random PDE for the transform $y = u-
\Phi W$, and different higher order numerical quadrature rules for the diffusion and the drift part.
The theoretical findings are supported by numerical simulations.
\end{abstract}

\keywords{Stochastic Navier--Stokes equations \and higher order approximation \and time discretisation  \and convergence rates}
\subjclass[2010]{65M15, 65C30, 60H15, 60H35}

\date{\today}

\maketitle

%
%
%
%
%
%
%
%
%
%

\section{Introduction}
Let $\mt \subset {\mathbb R}^2$  be the two-dimensional torus, in which we consider the stochastic Navier--Stokes equations driven by additive noise. Its solution consists of the velocity field ${\bf u}$ and the pressure function $p$, which are both defined on a filtered probability space $(\Omega, {\mathfrak F},
({\mathfrak F})_t), {\mathbb P})$, solving
\begin{eqnarray}\nonumber 
{\rm d}{\bf u}(t) 
+ \big[({\bf u}(t)\cdot\nabla){\bf u}(t)
- \nu\,\Delta {\bf u}(t)
+ \nabla p(t)\big]\,{\rm d}t = \Phi\,{\rm d}W(t), \quad \mbox{in } {\mathcal Q}_T\,, \\ \label{eq:stochastic-NS}
{\rm div}\, {\bf u} = 0\quad \mbox{in } {\mathcal Q}_T\,, \\ \nonumber
{\bf u}(0)={\bf u}_0 \quad \mbox{in } {\mathcal Q}_T\,, 
\end{eqnarray}
${\mathbb P}$-a.s. in ${\mathcal Q}_T := (0,T) \times {\mt}$, with terminal time $T>0$, the viscosity $\nu>0$, and ${\bf u}_0$ a given initial datum. The process
$W$ is a cylindrical Wiener
defined on a filtered probability space
$(\Omega, {\mathfrak F},
({\mathfrak F})_t), {\mathbb P})$, and the additive
noise enters the model through the mapping
\[
\Phi W(t,x) := \sum_{k\geq 1} W_k(t)\,\boldsymbol{\phi}_k(x),\qquad
\boldsymbol{\phi}_k\in \mathbb H^{2}(\mt)^2
\ \text{deterministic and divergence-free;}
\]
see Section \ref{theo-1} for precise details.
Different stochastic forcing terms are used in engineering sciences to drive the Navier-Stokes equations, to {\em e.g.}~study the (in-)stability of fluid flow patterns from the deterministic equation; see {\em e.g.}~\cite{BF1,BPW} for recent surveys. These include general multiplicative noise, transport-type noise, or additive noise ---, and each related model requires different numerical strategies to guarantee its accurate numerical resolution as well. The main focus in this work is to construct a higher order discretisation for (\ref{eq:stochastic-NS}) where the noise is additive. This then allows a reliable, efficient
simulation  of  (\ref{eq:stochastic-NS}) to {\em e.g.}~quantitatively understand the effect of increased driving noise  onto 
deterministic fluid flow patterns; see Figure \ref{fig_lid_expval_nu40} for the lid-driven cavity problem.

\begin{figure}
\includegraphics[width=0.32\textwidth]{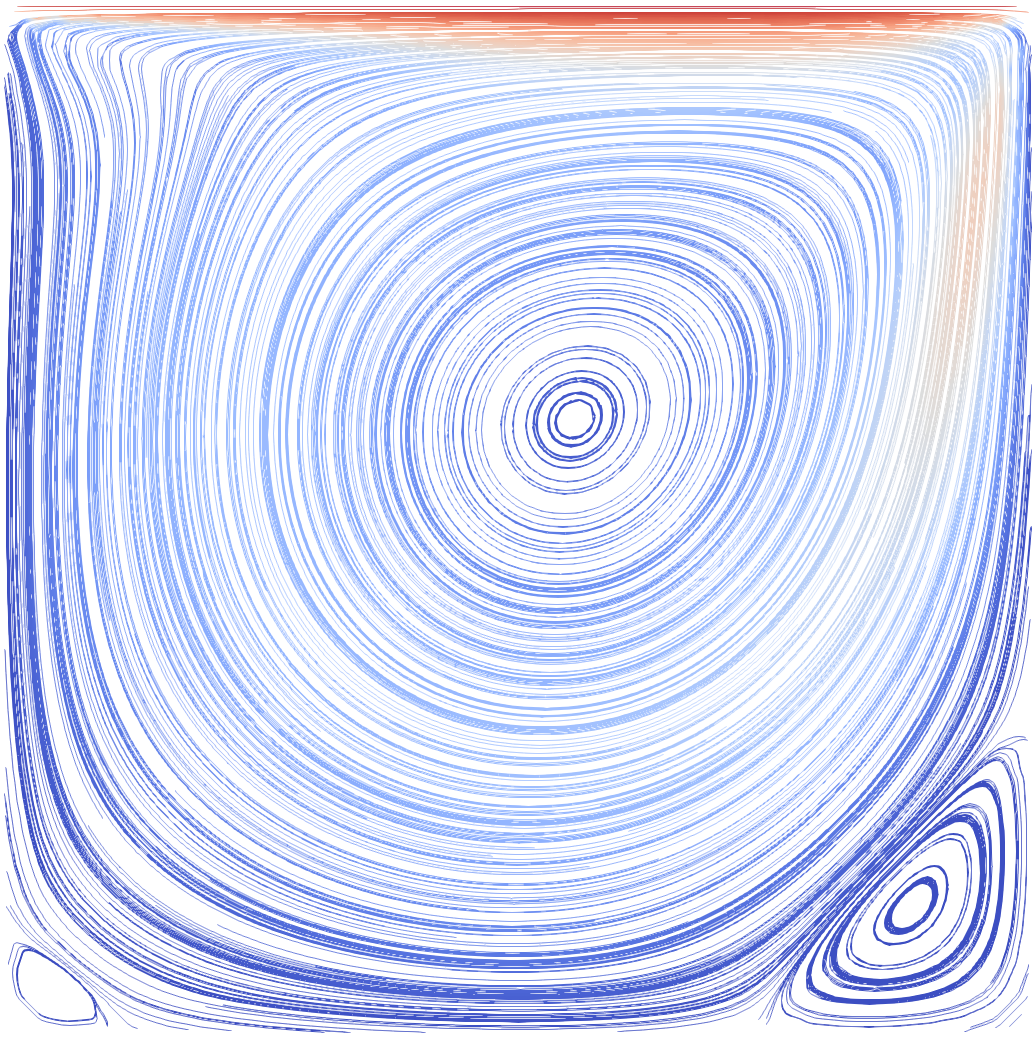}
\includegraphics[width=0.32\textwidth]{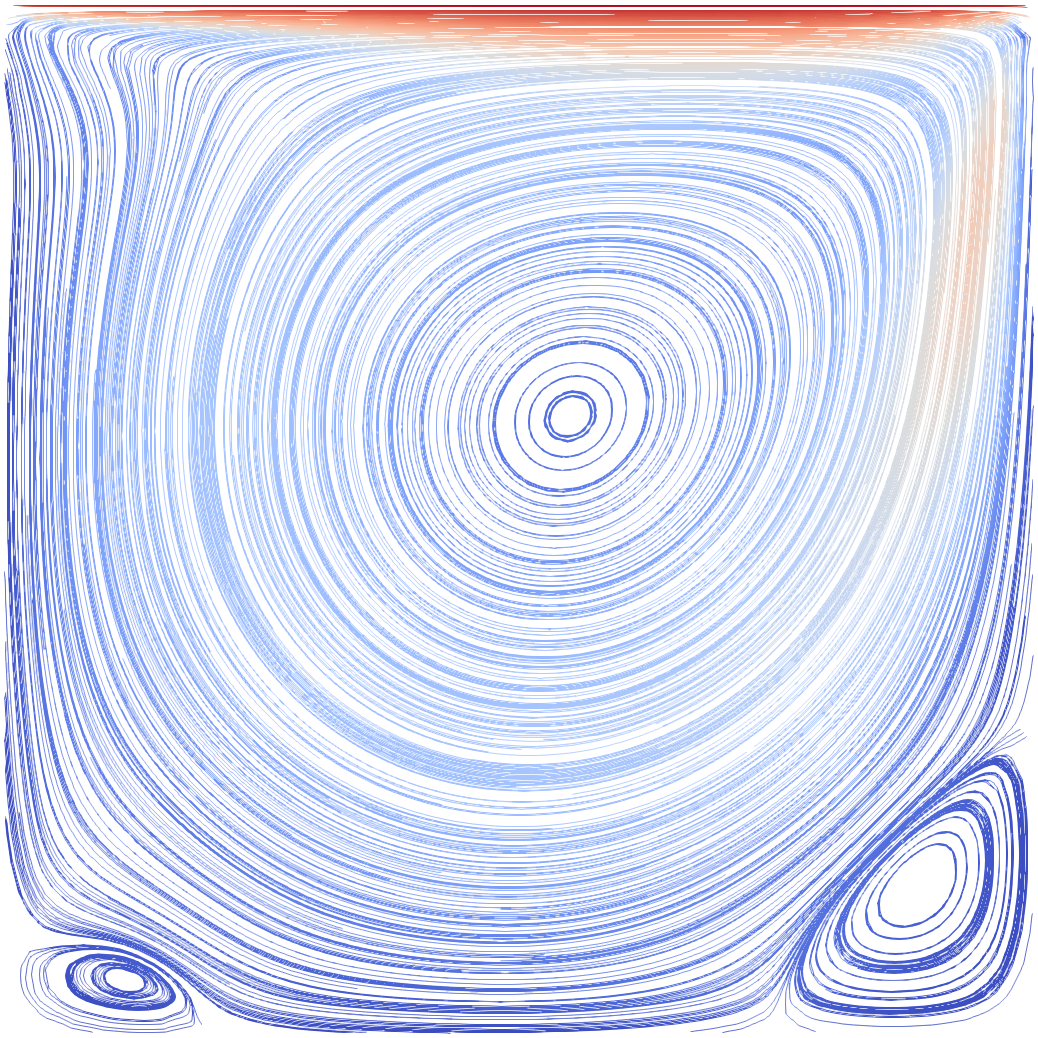}
\includegraphics[width=0.32\textwidth]{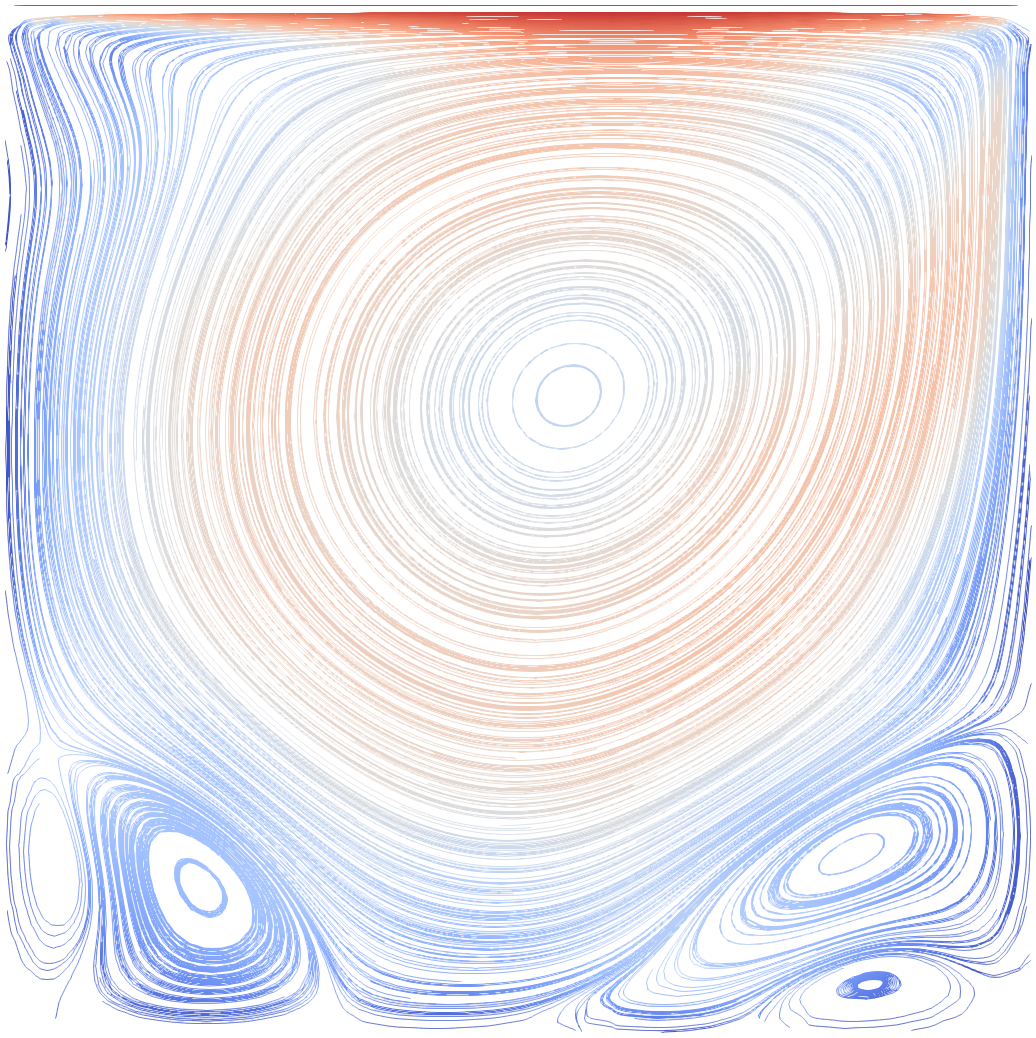}
\caption{Lid-driven cavity (see Section \ref{example_lid_driven}): Streamlines of the deterministic solution ($\mu = 0$; left),
streamlines of the expected value of the solution with  smaller  ($\mu=10$; middle), and  larger noise ($\mu=40$; right) at $T=20$.}
\label{fig_lid_expval_nu40}
\end{figure}

A main impedience towards faster simulation of (\ref{eq:stochastic-NS}) is the appearance of the Wiener process $W$, which is only H\"older continuous in time. As a result, a direct discretisation of it involves the increments of $W$ -- whose limited temporal regularity eventually leads to an order ${\mathcal O}(\sqrt{\tau})$ for the temporal discretisation error; we refer to \cite{BPW} for its detailed discussion. As a consequence, quite small time steps $\tau >0$ need be chosen to guarantee accurate simulations.

A special case is additive noise in (\ref{eq:stochastic-NS}), where more efficient discretisation strategies may be constructed; the idea for it is to apply  
the transform
\begin{equation}\label{zwei}
{\bf y}(t) = {\bf u}(t) - \int_0^t \Phi \, {\rm d}W(s) = {\bf u}(t) - \Phi W(t) \qquad (0 \leq t \leq T)\,,
\end{equation}
which solves the following {\em random} PDE
\begin{eqnarray}\nonumber
\partial_t {\bf y} = \nu \Delta{\bf y} - P_{\mathrm{HL}}\bigl[ ({\bf y}\cdot \nabla) {\bf y} \bigr]
+ \nu\Delta[\Phi W] - P_{\mathrm{HL}}[{\mathcal L}^W({\bf y})] \quad \mbox{in } {\mathcal Q}_T\,, \\ \label{req:stochastic-NS}
{\rm div}\, {\bf y} = 0\,, \\ \nonumber
{\bf y}(0) = {\bf u}_0.
\end{eqnarray}
Here $P_{\mathrm{HL}}$ is the Helmholtz-Leray projection onto divergence free functions, and ${\mathcal L}^W :=
\sum_{i=1}^3 {\mathcal L}_i^W$ couples the transform ${\bf y}$ with the Wiener process $W$, with
$${\mathcal L}_1^W({\bf y}) = ({\bf y}\cdot \nabla)[\Phi W] \,, \qquad
{\mathcal L}_2^W({\bf y}) = ([\Phi W]\cdot\nabla) {\bf y}\,, \qquad
{\mathcal L}_3^W  = ([\Phi W]\cdot\nabla )[\Phi W]\, .
 $$
Here we used that the driving noise is solenoidal, and sufficiently smooth in space.

The benefit now to construct a discretisation in time from  (\ref{req:stochastic-NS}) rather than
(\ref{eq:stochastic-NS}) is an improved convergence rate: in particular, the Euler discretisation for (\ref{req:stochastic-NS}) that was proposed and analyzed in \cite{BP1} converges at strong order ${\mathcal O}(\tau)$, which may be traced back to improved temporal regularity of the solution ${\bf y}$ of (\ref{req:stochastic-NS}), for which $\partial_t {\bf y}$ now exists as measurable function. Moreover, this special temporal discretisation of (\ref{req:stochastic-NS}) may be re-interpreted as implicit Euler method for (\ref{eq:stochastic-NS}), for which in fact an ${\mathcal O}(\tau)$-convergence rate now holds. We conclude by saying that driving an --- even  nonlinear --- stochastic partial differential equation (SPDE) with driving additive noise is special: an implicit Euler method for this SPDE may be expected to converge with order ${\mathcal O}(\tau)$ based on the above re-interpretation instead of only ${\mathcal O}(\sqrt{\tau})$ in the presence of driving multiplicative noise.

These considerations motivated \cite{CP1}, where even a discretisation of strong order ${\mathcal O}(\tau^{1.5})$ is constructed for the linear stochastic heat equation driven by additive noise. The starting point in \cite{CP1}  is again the random PDE for the transform ${\bf y}$ via (\ref{zwei}) --- which reads like (\ref{req:stochastic-NS}), but without the terms $P_{\mathrm{HL}}\bigl[ ({\bf y}\cdot\nabla) {\bf y} \bigr]$ and $P_{\mathrm{HL}}[{\mathcal L}^W({\bf y})]$. The  higher-order implicit scheme  uses 
\begin{itemize}
\item[(i)] a Crank-Nicholson discretisation for the time differentiable term, which appears in (\ref{req:stochastic-NS}) as $\nu \Delta{\bf y}$, and whose {\em time derivative}, in appropriate norms, is even H\"older continuous with exponent $\frac{1}{2}$, and
\item[(ii)]
a mesh of order ${\mathcal O}(\tau^2)$ and an Euler-type discretization
of the H\"older continuous term, which appears in (\ref{req:stochastic-NS}) as  $\Delta[\Phi W]$.
\end{itemize}
In the discussion below, and different from the implicit Euler method,
this higher-order method for the random PDE (see Algorithm \ref{alg:MCN-SPDE} in Section \ref{snse_scheme}) may now {\em not} any more be re-interpreted  as a straight-forward transformation of the discretisation for the original SPDE. We remark that the resulting  scheme of order ${\mathcal O}(\tau^{1.5})$ in \cite{CP1} is of comparable complexity as the implicit Euler method, {since the main computational effort is needed to solve related algebraic systems that result from discretization in space}; see also Remark \ref{remark_aufwand} in this regard. On a technical level for the convergence proof, the improved order is linked to step (ii) where the term $\int_{t_{m-1}}^{t_m}\Delta[\Phi W(s)]\, {\rm d}s$ is approximated on the finer micro mesh of order ${\mathcal O}(\tau^2)$. 
 
 The error analysis in \cite{CP1} for the linear stochastic heat equation is based on
 improved regularity properties for the solution ${\bf y}$ of the related random PDE,
 a truncation error bound  for the arising deterministic integral in the context of the Crank-Nicholson scheme for integrands with limited regularity in \cite{DM1}, and a perturbation argument. A crucial difference between the numerical analysis of
 low-order methods such as the implicit Euler method, and the present higher order method  is that the latter lacks an immediate  discrete energy bound --- which is a relevant property to prove convergence, in particular here since the SPDE (\ref{req:stochastic-NS}) is nonlinear.  This missing property for the higher order method will be
 compensated in its analysis below by assuming stronger regularities for the data; we refer to Section \ref{subsec:solution2} for further details. Hence, a relevant question could be whether the higher order method 
 still performs well for general data of only basic regularity; we do not pursue this direction here and leave this interesting direction to future research. Instead, we focus in this work on analyzing the interplay of the quadratic nonlinearity in (\ref{req:stochastic-NS}) with the noise term in the context of a higher-order temporal discretization, in the context of regular data.
%
%
%
 
 The detailed aims of this paper is to contribute to the following aspects:
 \begin{itemize}
 \item[(i)] {\bf Higher order scheme:} We use the reformulation (\ref{req:stochastic-NS}) to construct a scheme ({\em i.e.}, Algorithm \ref{alg:MCN-SPDE}) in Section \ref{snse_scheme}. Asymptotic strong order ${\mathcal O}(\tau^{1.5})$ in probability for velocity iterates is then verified in Section \ref{error_velocity}. The construction tools combine those already available from \cite{CP1} with a proper discretization of the nonlinear  coupling terms in (\ref{req:stochastic-NS}).
 \item[(ii)] {\bf Approximation of pressure:} {To approximate the pressure $p$ in (\ref{eq:stochastic-NS})
 is a delicate issue for its numerical analysis respectively simulation; see \cite{BPW} for a detailed discussion. We remark that 
 $p$ is {\em not} transformed as the velocity ${\bf u}$ is in (\ref{zwei}), and thus 
 the same pressure $p$ appears in a random PDE like  (\ref{zwei}) if no projection were applied. Consequently,  we may not expect $\partial_t p$ to exist in proper function spaces, in particular; see also Remark \ref{pressure_remark}. Nevertheless, we show strong order
 ${\mathcal O}(\tau^{1.5})$ in probability of pressure iterates $p_{n+1}$ from
 Algorithm \ref{alg:MCN-SPDE} towards {\em locally time averaged pressures $\frac{1}{\tau} \int_{t_n}^{t_{n+1}} p(s)\, ds$ from (\ref{zwei})} in Section \ref{strong_pressure}.}
 \item[(iii)] {\bf Computational studies:} Variational tools conceptionally spurred the construction (and analysis) of Algorithm \ref{alg:MCN-SPDE}:
 though its {\em theoretical backup} in this work is confined to the torus ${\mathbb T}^2$, and 
 periodic boundary conditions to profit from the regularity results in Section \ref{req:stochastic-NS}, {\em the scheme} may be used on general domains in 2D and 3D,
 or may easily be adjusted to mixtures in complex, or even non-Newtonian fluid flows --- which would not be possible if more specialized numerical tools ({\em e.g.}, exponential integrators) would constitute the method. 
 
 The simulations in Section \ref{comstudies2} use {\em discretely inf-sup stable} mixed finite elements, the two-dimensional domain $(0,1)^2$, and Dirichlet boundary conditions: first, we report on computational convergence orders for both, velocity and pressure iterates from  Algorithm \ref{alg:MCN-SPDE} for an academic example with prescribed Dirichlet boundary data. Then, we resume with the lid-driven cavity fluid flow from Figure \ref{fig_lid_expval_nu40}, evidencing the impact that stronger
 noise has on pathwise as well as averaged fluid flow dynamics. 
 Since this practically relevant problem studies dynamics for longer times --- and
 {the overall complexity of the higher order method is
 comparable to {\em e.g.}~the lower-order semi-implicit Euler method for (\ref{eq:stochastic-NS}) ---, its increased accuracy here allows larger time steps, and hence a lower computational effort to preserve a given error tolerance.}%
%
 \item[(iv)] {\bf Extensions:} The convergence theory in this work is for (\ref{eq:stochastic-NS}) with solenoidal noise. However, only slight modifications apply when general noise is considered (see Section \ref{section_general_noise}),
 and the quadratic nonlinearity is omitted, {\em i.e.}, the Stokes flow is considered (see Section
  \ref{sec:stokes}).
 \end{itemize}
 The remainder of the manuscript is organised as follows: Section \ref{theo-1} gathers together relevant notations, recalls the solution concept for (\ref{eq:stochastic-NS}), and
 improved time regularity results for (\ref{req:stochastic-NS}). The higher order scheme is proposed and analysed in Sections \ref{theo-2} and \ref{sec:stokes}.
Computational studies are reported in Section \ref{comstudies2}.%

\section{Theoretical Background}\label{theo-1}

\subsection{Function spaces}
By $\mathbb L^q(\mt)$ with $1\leq q\leq\infty$ we denote the standard Lebesgue spaces over $\mt$ of periodic functions.
In particular, we set
\[
\Qspace := \mathbb{L}_0^2({\mathbb{T}^2}):=\bigg\{q\in\mathbb{L}^2(\mathbb{T}^2) : \int_{\mathbb{T}^2}q(x)\,{\rm d}x=0\bigg\},
\]
for the pressure space
and we write $(\cdot,\cdot)$ for the $\mathbb L^2({\mathbb{T}^2})$ inner
product. Also, we write
\[
\mathbb H :=\bigg\{{\bf v}\in\mathbb{L}^2(\mathbb{T}^2)^2 : \int_{\mathbb{T}^2}\mathbf{v}\cdot\nabla q(x)\,{\rm d}x=0\,\,\,\forall q\in\mathbb H^1(\mt)\bigg\}.
\]
By $\mathbb{H}^k({\mathbb{T}^2})$ for $k\in\N$ we denote Sobolev spaces with differentiability $k$ (and integrability $q=2$). The dual of $\mathbb{H}^k({\mathbb{T}^2})$ is denoted by $\mathbb{H}^{-k}({\mathbb{T}^2})$.
We use the standard solenoidal velocity space
\begin{align*}
\Vspace := \{{\bf v}\in \mathbb{H}^1({\mathbb{T}^2})^2:\ \diver {\bf v}=0\}
\end{align*}
with dual space $\mathbb V^{-1}$.
The transport-form trilinear form is defined by
\[
\mathcal C({\bf a},{\bf b},{\bf c})
:
= \int_{\mathbb{T}^2}({\bf a}(x)\!\cdot\!\nabla){\bf b}(x)\cdot {\bf c}(x)\,\mathrm{d}x,
\qquad {\bf a}\in \Vspace,{\bf b},{\bf c}\in\mathbb{H}^1({\mathbb{T}^2})^2.
\]

We will use the following well-known cancellation property.
\begin{lemma}[Transport-form energy cancellation]\label{lem:transport}
Let ${\bf a}\in \mathbb{V}$ and
${\bf w}\in \mathbb{H}^1({\mathbb{T}^2})^2$. Then
\[
 \mathcal C({\bf a},{\bf w},{\bf w})
 = \int_{\mathbb{T}^2} ({\bf a}(x)\!\cdot\!\nabla){\bf w}(x)\cdot {\bf w}(x)\,\mathrm{d}x
 = \frac12\int_{\mathbb{T}^2} {\bf a}(x)\!\cdot\!\nabla\bigl(|{\bf w}({x})|^2\bigr)\,\mathrm{d}x = 0.
\]
\end{lemma}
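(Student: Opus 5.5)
The plan is the standard two-step argument: rewrite the integrand pointwise as a gradient of $\tfrac12|{\bf w}|^2$, then integrate by parts on the torus and use that ${\bf a}$ is divergence free, either directly in the weak form built into $\mathbb V$ or via density.

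\emph{Step 1 (pointwise identity).} In components, $({\bf a}\cdot\nabla){\bf w}\cdot{\bf w}=\sum_{i,j} a_j\,\partial_j w_i\,w_i=\tfrac12\sum_j a_j\,\partial_j\bigl(\sum_i w_i^2\bigr)=\tfrac12\,{\bf a}\cdot\nabla|{\bf w}|^2$ almost everywhere. This uses the chain rule $\partial_j(w_i^2)=2w_i\partial_j w_i$, which holds for $w_i\in\mathbb H^1(\mathbb T^2)$. This gives the middle equality of the lemma.

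\emph{Step 2 (integration by parts).} For ${\bf a}\in\mathbb V$ with $\diver{\bf a}=0$ and smooth periodic $\varphi$, periodicity kills boundary terms, so $\int_{\mathbb T^2}{\bf a}\cdot\nabla\varphi\,{\rm d}x=-\int_{\mathbb T^2}(\diver{\bf a})\,\varphi\,{\rm d}x=0$. We want to apply this with $\varphi=|{\bf w}|^2$. This function is only in $W^{1,1}(\mathbb T^2)$, since $\nabla|{\bf w}|^2=2(\nabla{\bf w})^{\top}{\bf w}\in\mathbb L^{q}$ for every $q<2$ by Sobolev embedding $\mathbb H^1\hookrightarrow\mathbb L^r$, $r<\infty$, in two dimensions.

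\emph{Main obstacle: justifying Step 2 at this low regularity.} I would argue by density. Take smooth periodic ${\bf w}_n\to{\bf w}$ in $\mathbb H^1$; the identity $\int{\bf a}\cdot\nabla|{\bf w}_n|^2\,{\rm d}x=0$ holds for each $n$. To pass to the limit in the trilinear form $\mathcal C({\bf a},{\bf w}_n,{\bf w}_n)$, split
\[
\mathcal C({\bf a},{\bf w}_n,{\bf w}_n)-\mathcal C({\bf a},{\bf w},{\bf w})=\mathcal C({\bf a},{\bf w}_n-{\bf w},{\bf w}_n)+\mathcal C({\bf a},{\bf w},{\bf w}_n-{\bf w}),
\]
and bound each term by the continuity estimate
\[
|\mathcal C({\bf a},{\bf b},{\bf c})|\le\|{\bf a}\|_{\mathbb L^4}\|\nabla{\bf b}\|_{\mathbb L^2}\|{\bf c}\|_{\mathbb L^4}\le C\|{\bf a}\|_{\mathbb H^1}\|{\bf b}\|_{\mathbb H^1}\|{\bf c}\|_{\mathbb H^1},
\]
where the last step uses Ladyzhenskaya's inequality. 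Both terms therefore tend to zero, and $\mathcal C({\bf a},{\bf w},{\bf w})=0$ follows.

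Alternatively, the defining weak-divergence-free property of ${\bf a}$, i.e.\ $\int{\bf a}\cdot\nabla q\,{\rm d}x=0$ for all $q\in\mathbb H^1$, can be extended by density to $q\in W^{1,4/3}$. This works because ${\bf a}\in\mathbb L^4$ and $\nabla q\in\mathbb L^{4/3}$ are in duality, and then one takes $q=|{\bf w}|^2$.
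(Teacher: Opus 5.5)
Your proof is correct and follows the paper's route. The paper gives no separate proof, calling the lemma well known; its displayed chain of equalities is exactly your pointwise identity $({\bf a}\cdot\nabla){\bf w}\cdot{\bf w}=\tfrac12\,{\bf a}\cdot\nabla|{\bf w}|^2$ followed by integration by parts against the divergence-free field ${\bf a}$. Your density argument, or alternatively the extension of $\int_{\mathbb{T}^2}{\bf a}\cdot\nabla q\,\mathrm{d}x=0$ to $q\in W^{1,4/3}$, supplies the justification at $\mathbb H^1$ regularity that the paper leaves implicit.
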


We also recall the standard continuity estimate for the trilinear form,
which follows from H\"older's inequality, Ladyzhenskaya's inequality,
and the Poincar\'e inequality in two dimensions: there exists a constant
$C>0$ such that
\begin{equation}\label{eq:C-cont}
|\mathcal C({\bf a},{\bf b},{\bf c})|
\le C\,\|{\bf a}\|_{\mathbb{H}^1}\,
       \|{\bf b}\|_{\mathbb{H}^1}\,
       \|\nabla {\bf c}\|_{\mathbb L^2({\mathbb{T}^2})},
\qquad {\bf a},{\bf b},{\bf c}\in\mathbb H^1({\mathbb{T}^2})^2.
\end{equation}

For a separable Banach space $(X,\|\cdot\|_X)$, we denote by $\mathbb L^p(I;X)$, the set of (Bochner-) measurable functions $u:I\rightarrow X$ such that the mapping $t\mapsto \|u(t)\|_{X}$ belongs to $\mathbb L^p(I)$. 
The set $C(\overline{I};X)$ denotes the space of functions $u:\overline{I}\rightarrow X$ which are continuous with respect to the norm topology on $(X,\|\cdot\|_X)$. For $\alpha\in(0,1]$ we write
$C^{0,\alpha}(\overline{I};X)$ for the space of H\"older-continuous functions with values in $X$. 
Similarly, for a probability space $\mathbb F:=(\Omega,\mathfrak F,\p)$ and a separable Banach space $(X,\|\cdot\|_X)$ and $p\in[1,\infty]$
we write $\mathbb L^p(\Omega,\mathfrak F,\p;X)$ or short
$\mathbb L^p_{\mathbb F}(\Omega;X)$ for the set of (Bochner-) measurable functions $v:\Omega\rightarrow X$ such that the mapping $\omega\mapsto \|v(\omega)\|_{X}$ belongs to $\mathbb L^p(\Omega,\mathfrak F,\p)$.

\subsection{The concept of solutions for SPDE (\ref{eq:stochastic-NS})}
\label{subsec:solution1}
Let $(\Omega,\F,(\F_t)_{t\geq0},\prst)$ be a stochastic basis with a complete, right-continuous filtration. The process $W$ is a cylindrical Wiener process, that is, $W(t)=\sum_{k\geq1}W_j(t) \mathfrak e_j$ with $(W_j)_{j\geq1}$ being mutually independent real-valued standard Wiener processes relative to $(\F_t)_{t\geq0}$, and $(\mathfrak e_j)_{j\geq1}$ a complete orthonormal system in a separable Hilbert space $\mathfrak{U}$.
We assume that the diffusion coefficient $\varPhi$ belongs to the set of Hilbert-Schmidt operators $L_2(\mathfrak U;\mathbb X)$, where
$\mathbb X$ can take the role of various Hilbert spaces. Of particular importance are spaces of solenoidal vector fields, such as $\mathbb H$, $\mathbb V$ and $\mathbb H^{k}(\mt)$.

For a diffusion coefficient $\Phi\in L_2(\mathfrak U;\mathbb X)$, where $L_2$ denotes the space of Hilbert-Schmidt operators, the stochastic integral
 \begin{align*}
\int_0^t \varPhi\,\dd W=\sum_{k\geq 1} \int_0^t\Phi\mathfrak e_k\dd W_k
\end{align*}
is well-defined with values in $\mathbb X$.

In dimension two, pathwise uniqueness for weak solutions is known, we refer the reader for instance to Capi\'nski--Cutland \cite{CC} and Capi\'nski \cite{Ca}. Consequently, we may work with the definition of a weak pathwise solution to (\ref{eq:stochastic-NS}).

\begin{definition}\label{def:inc2d}
Let $(\Omega,\mf,(\mf_t)_{t\geq0},\prst)$ be a given stochastic basis with a complete right-conti\-nuous filtration and an $(\mf_t)$-cylindrical Wiener process $W$. Let ${\bf u}_0$ be an $\mf_0$-measurable random variable and $\Phi\in L_2(\mathfrak U;\mathbb L^2(\mt)^2)$. Then ${\bf u}$ is called a \emph{weak pathwise solution} \index{incompressible Navier--Stokes system!weak pathwise solution} to (\ref{eq:stochastic-NS}) with the initial condition ${\bf u}_0$ provided
\begin{enumerate}
\item the velocity field ${\bf u}$ is $(\mf_t)$-adapted and
$${\bf u} \in C([0,T];\mathbb H)\cap L^2(0,T; \mathbb V)\quad\text{$\p$-a.s.},$$
\item the momentum equation
\begin{align*}
&\int_{\mt}{\bf u}(t)\cdot\boldsymbol{\varphi}\dx-\int_{\mt}{\bf u}_0\cdot\boldsymbol{\varphi}\dx
\\&=-\int_0^t\int_{\mt}({\bf u}\cdot\nabla){\bf u}\cdot\boldsymbol{\varphi}\dx\,\dif t-\nu\int_0^t\int_{\mt}\nabla{\bf u}:\nabla\boldsymbol{\varphi}\dx\,\dif s\\
&\qquad+\int_0^t\int_{\mt}\Phi\cdot\boldsymbol{\varphi}\dx\,\dif W.
\end{align*}
holds $\p$-a.s. for all $\boldsymbol{\varphi}\in \mathbb V$ and all $t\in[0,T]$.
\end{enumerate}
\end{definition}

\begin{theorem}\label{thm:inc2d}
Let $N=2$ and $\Phi\in L_2(\mathfrak U;\mathbb L^2(\mt)^2)$. Let $(\Omega,\mf,(\mf_t)_{t\geq0},\prst)$ be a given stochastic basis with a complete right-continuous filtration and an $(\mf_t)$-cylindrical Wiener process $W$. Let ${\bf u}_0$ be an $\mf_0$-measurable random variable such that ${\bf u}_0\in L^r(\Omega;\mathbb H)$ for some $r>2$. Then there exists a unique weak pathwise solution to (\ref{eq:stochastic-NS}) in the sense of Definition \ref{def:inc2d} with the initial condition ${\bf u}_0$.
\end{theorem}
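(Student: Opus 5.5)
The plan is to prove Theorem \ref{thm:inc2d} pathwise, through the same transform (\ref{zwei}) that motivates the scheme. Let ${\bf z}$ solve the linear stochastic Stokes problem ${\rm d}{\bf z} = \nu\Delta{\bf z}\,\dt + \Phi\,\dd W$ with ${\bf z}(0)=0$, that is, the stochastic convolution ${\bf z}(t)=\int_0^t e^{\nu(t-s)\Delta}\Phi\,\dd W(s)$. I use this Ornstein--Uhlenbeck variant rather than $\Phi W$ itself, because $\Phi$ is only Hilbert--Schmidt into $\mathbb L^2$.

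\textbf{Step 1: regularity of ${\bf z}$.} Using the factorization method (or the Burkholder--Davis--Gundy inequality together with Kolmogorov's criterion), I show that ${\bf z}\in C([0,T];\mathbb H)\cap L^2(0,T;\mathbb V)$ $\p$-a.s. I also show ${\bf z}\in L^4(0,T;\mathbb L^4(\mt)^2)$ $\p$-a.s., which follows since $\Phi\in L_2(\mathfrak U;\mathbb L^2)$ gives ${\bf z}\in C([0,T];\mathbb H^{\alpha})$ for small $\alpha>0$ with finite moments. The process ${\bf z}$ is adapted.

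\textbf{Step 2: pathwise random PDE for ${\bf v}={\bf u}-{\bf z}$.} For fixed $\omega$, ${\bf v}$ solves the deterministic problem
\[
\partial_t{\bf v}-\nu\Delta{\bf v}+P_{\mathrm{HL}}\bigl[(({\bf v}+{\bf z})\cdot\nabla)({\bf v}+{\bf z})\bigr]=0,\qquad {\bf v}(0)={\bf u}_0 .
\]
I construct ${\bf v}$ by a Galerkin approximation in the Fourier basis of $\mt$. Testing with ${\bf v}$ and using Lemma \ref{lem:transport} removes $\mathcal C({\bf v}+{\bf z},{\bf v},{\bf v})$. The remaining terms are handled by integration by parts, Ladyzhenskaya's inequality and Young's inequality:
\[
|\mathcal C({\bf v}+{\bf z},{\bf z},{\bf v})|\le \|{\bf v}+{\bf z}\|_{\mathbb L^4}\|{\bf z}\|_{\mathbb L^4}\|\nabla{\bf v}\|_{\mathbb L^2}\le \tfrac{\nu}{2}\|\nabla{\bf v}\|^2_{\mathbb L^2}+C\|{\bf z}\|_{\mathbb L^4}^4\|{\bf v}\|^2_{\mathbb L^2}+C\|{\bf z}\|^4_{\mathbb L^4}.
\]
Gronwall's lemma then gives bounds in $L^\infty(0,T;\mathbb H)\cap L^2(0,T;\mathbb V)$ that depend on $\|{\bf u}_0\|$ and $\|{\bf z}\|_{L^4L^4}$. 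I also bound $\partial_t{\bf v}$ in $L^2(0,T;\mathbb V^{-1})$ using (\ref{eq:C-cont}) and the 2D estimate for the nonlinearity. Aubin--Lions compactness then lets me pass to the limit, and Lions--Magenes gives ${\bf v}\in C([0,T];\mathbb H)$.

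\textbf{Step 3: uniqueness, measurability and conclusion.} I prove uniqueness for the pathwise problem by taking the difference ${\bf w}$ of two solutions and applying the standard 2D estimate
\[
|\mathcal C({\bf w},{\bf v}_2+{\bf z},{\bf w})|\le C\|{\bf w}\|_{\mathbb L^2}\|\nabla{\bf w}\|_{\mathbb L^2}\|\nabla({\bf v}_2+{\bf z})\|_{\mathbb L^2},
\]
followed by Gronwall's lemma, which is valid because ${\bf z}\in L^2(0,T;\mathbb V)$. Uniqueness implies that the whole Galerkin sequence converges, so there is no subsequence depending on $\omega$. Hence ${\bf v}$ is a measurable function of the pair $({\bf u}_0,{\bf z}|_{[0,t]})$, and therefore $(\mf_t)$-adapted. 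Setting ${\bf u}={\bf v}+{\bf z}$ gives the weak formulation of Definition \ref{def:inc2d}, using the weak form of the equation for ${\bf z}$. Uniqueness of ${\bf u}$ follows from the uniqueness of ${\bf v}$. The moment assumption ${\bf u}_0\in L^r(\Omega;\mathbb H)$ is not needed for existence, but it provides moment bounds that I would record.

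\textbf{Main obstacle.} The delicate point is Step 2 with data of minimal regularity. The cross terms involving ${\bf z}$ must be controlled using only $\mathbb L^4$ and $\mathbb H^1$ information on ${\bf z}$. So I must verify the a.s. integrability ${\bf z}\in L^4(0,T;\mathbb L^4)\cap L^2(0,T;\mathbb V)$, and the factorization estimate in Step 1 is where I expect the technical work. The measurability (adaptedness) in Step 3 also needs care. Alternatively, the whole statement can be cited from Capi\'nski--Cutland \cite{CC} and Capi\'nski \cite{Ca}, which is what the argument above reconstructs.
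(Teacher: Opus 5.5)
Your argument is correct, but it follows a different route from the paper. The paper gives no proof of this theorem. It imports the result from the literature, noting only that pathwise uniqueness in two dimensions is known (Capi\'nski--Cutland, Capi\'nski). The standard argument behind such a citation has two stages. First one constructs martingale solutions by Galerkin approximation and stochastic compactness; this is where higher moments such as ${\bf u}_0\in L^r(\Omega;\mathbb H)$, $r>2$, are used. Then one upgrades them to a unique pathwise solution via pathwise uniqueness and a Yamada--Watanabe or Gy\"ongy--Krylov argument.

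You instead exploit that the noise is additive. Subtracting the Ornstein--Uhlenbeck process ${\bf z}$ leaves a deterministic Navier--Stokes problem, which you solve $\omega$ by $\omega$. Adaptedness then comes from uniqueness; it comes out even more cleanly from the continuous dependence of ${\bf v}$ on $({\bf u}_0,{\bf z})$, which the same difference estimate provides. Your route is more elementary, avoids Skorokhod-type representations, does not need $r>2$, and matches the transform (\ref{zwei}) on which the paper's scheme is built. The compactness route, by contrast, does not depend on the structure of the noise (it also covers multiplicative noise) and yields moment bounds such as Lemma \ref{lem:reg}(a) along the way.

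Two small corrections. First, $\Phi$ is not assumed solenoidal in the theorem, so ${\bf z}$ must be driven by $P_{\mathrm{HL}}\Phi$, i.e. ${\bf z}(t)=\int_0^t e^{\nu(t-s)\Delta}P_{\mathrm{HL}}\Phi\,\dd W(s)$. Otherwise ${\rm div}\,{\bf z}\neq 0$ and ${\bf u}={\bf v}+{\bf z}\notin\mathbb H$. Nothing else changes, since $(\Phi,\boldsymbol\varphi)=(P_{\mathrm{HL}}\Phi,\boldsymbol\varphi)$ for $\boldsymbol\varphi\in\mathbb V$.

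Second, the step you flag as the main obstacle is not one. The bound ${\bf z}\in C([0,T];\mathbb H)\cap L^2(0,T;\mathbb V)$ follows from the It\^o energy identity for the linear equation, $\mathbb E\|{\bf z}(t)\|^2_{\mathbb L^2}+2\nu\,\mathbb E\int_0^t\|\nabla{\bf z}\|^2_{\mathbb L^2}\ds=t\,\|P_{\mathrm{HL}}\Phi\|^2_{L_2(\mathfrak U;\mathbb L^2)}$, together with the variational (Krylov--Rozovskii) theory for path continuity. Ladyzhenskaya's inequality then gives $\int_0^T\|{\bf z}\|^4_{\mathbb L^4}\dt\le C\sup_t\|{\bf z}\|^2_{\mathbb L^2}\int_0^T\|{\bf z}\|^2_{\mathbb H^1}\dt<\infty$ directly, with no factorization argument needed. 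Your stated justification via ${\bf z}\in C([0,T];\mathbb H^\alpha)$ for \emph{small} $\alpha>0$ would not suffice in any case, because the embedding $\mathbb H^\alpha(\mt)\hookrightarrow\mathbb L^4$ requires $\alpha\ge 1/2$.
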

In the case of periodic boundary conditions spatial regularity can be shown by deterministic methods. A corresponding statement, suitable for our purposes, is given in the following lemma.
\begin{lemma}\label{lem:reg}
Let the assumptions of Theorem \ref{thm:inc2d} be satisfied. 
 \begin{enumerate}
\item[(a)] We have
\begin{align}\label{eq:W12}
\E\bigg[\sup_{0\leq t\leq T}\int_{\mt}|{\bf u}|^2\dx+\int_0^T\int_{\mt}|\nabla{\bf u}|^2\dxt\bigg]^{\frac{r}{2}}\leq\,c_r\,\E\Big[1+\|{\bf u}_0\|_{\mathbb L^2(\mt)^2}^2\Big]^{\frac{r}{2}}.
\end{align}
\item[(b)] Assume that ${\bf u}_0\in \mathbb L^r_{\mathbb F}(\Omega,\mathbb V)$ for some $r\geq2$ and $\Phi\in L_2(\mathfrak U;\mathbb V)$. Then we have
\begin{align}\label{eq:W22}
\E\bigg[\sup_{0\leq t\leq T}\int_{\mt}|\nabla{\bf u}|^2\dx+\int_0^T\int_{\mt}|\nabla^2{\bf u}|^2\dxt\bigg]^{\frac{r}{2}}\leq\,c_r\,\E\big[1+\|{\bf u}_0\|_{\mathbb H^1(\mt)^2}^2\Big]^{\frac{r}{2}}.
\end{align}
\item[(c)] Let $m\in\N$ with $m\geq 2$. Assume that ${\bf u}_0\in \mathbb L^r_{\mathbb F}(\Omega,\mathbb H^{m}(\mt))\cap \mathbb L_{\mathbb F}^{(2m+1)r}(\Omega,\mathbb V)$ for some $r\geq2$ and  and $\Phi\in L_2(\mathfrak U;\mathbb V\cap\mathbb H^{m}(\mt)^2)$. Then we have
\begin{align}\label{eq:W32}
\begin{aligned}
\E\bigg[\sup_{0\leq t\leq T}\|{\bf u}\|_{\mathbb H^{m}(\mt)^2}^2\dx&+\int_0^T\|{\bf u}\|_{\mathbb H^{m+1}(\mathbb T)^2}^2\dt\bigg]^{\frac{r}{2}}\\&\leq\,c_r\,\E\Big[1+\|{\bf u}_0\|_{\mathbb H^2(\mt)^2}^2+\|{\bf u}_0\|_{\mathbb V}^{2(2m+1)}\Big]^{\frac{r}{2}}.
\end{aligned}
\end{align}
\end{enumerate}
\end{lemma}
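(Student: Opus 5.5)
The plan is to prove all three parts by Itô's formula applied to successively higher Sobolev norms. On the torus the Leray projection commutes with derivatives, so testing with $-\Delta{\bf u}$ and $\Delta^2{\bf u}$, or more generally with $(-\Delta)^m{\bf u}$, is legitimate, and the pressure drops out. To make this rigorous I would first work with a Galerkin approximation ${\bf u}^N$ in the Fourier basis of solenoidal fields; Fourier projections commute with $\Delta$ and $P_{\mathrm{HL}}$. I would prove the bounds uniformly in $N$ and pass to the limit by lower semicontinuity, using the pathwise uniqueness from Theorem \ref{thm:inc2d} to identify the limit.

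For part (a), Itô's formula for $\|{\bf u}\|_{\mathbb L^2}^2$ together with Lemma \ref{lem:transport} removes the convective term. This leaves
\[
\|{\bf u}(t)\|^2+2\nu\int_0^t\|\nabla{\bf u}\|^2\ds=\|{\bf u}_0\|^2+t\|\Phi\|_{L_2}^2+2\int_0^t({\bf u},\Phi\,\dd W).
\]
Raising this to the power $r/2$ and taking $\sup_t$ and expectations, I would control the martingale by the Burkholder--Davis--Gundy inequality:
\[
\E\sup_t\Big|\int_0^t({\bf u},\Phi\,\dd W)\Big|^{r/2}\le c\,\E\Big(\int_0^T\|{\bf u}\|^2\|\Phi\|_{L_2}^2\ds\Big)^{r/4}.
\]
Young's inequality then absorbs $\tfrac12\E\sup\|{\bf u}\|^r$ into the left-hand side, and Gronwall's lemma closes the estimate.

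For part (b), I would apply Itô's formula to $\|\nabla{\bf u}\|^2$. The key observation is the two-dimensional periodic identity $\big(({\bf u}\cdot\nabla){\bf u},\Delta{\bf u}\big)=0$ for divergence-free ${\bf u}$, which follows from the vorticity formulation. The convective term therefore vanishes again, the Itô correction equals $\|\nabla\Phi\|_{L_2}^2$, which is finite because $\Phi\in L_2(\mathfrak U;\mathbb V)$, and the same BDG, Young and Gronwall argument yields \eqref{eq:W22}.

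For part (c), I would induct on $m\ge2$, applying Itô's formula to $\|\nabla^m{\bf u}\|^2$. This is the main obstacle, because the cancellation is no longer exact. By Leibniz, with the top-order term $({\bf u}\cdot\nabla)\nabla^m{\bf u}\cdot\nabla^m{\bf u}$ cancelling through Lemma \ref{lem:transport}, we get
\[
\big|\big(\nabla^m[({\bf u}\cdot\nabla){\bf u}],\nabla^m{\bf u}\big)\big|\le c\sum_{j=1}^{m}\|\nabla^j{\bf u}\,\nabla^{m+1-j}{\bf u}\|\,\|\nabla^m{\bf u}\|.
\]
Using Gagliardo--Nirenberg and Ladyzhenskaya in 2D, I would bound this by $\tfrac\nu2\|\nabla^{m+1}{\bf u}\|^2+c\,\|\nabla{\bf u}\|^{2}\,\|{\bf u}\|_{\mathbb H^m}^2$, or by a polynomial in lower norms. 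Rather than Gronwall with a random coefficient, which would require exponential moments, I would interpolate the middle terms to get a bound of the form $\tfrac\nu2\|{\bf u}\|_{\mathbb H^{m+1}}^2+c\,\|{\bf u}\|_{\mathbb H^{m-1}}^{a}\|{\bf u}\|_{\mathbb H^{m}}^{b}$ with $b<2$ after absorption. Integrating in time then makes the right-hand side depend on $\sup_t\|\nabla{\bf u}\|^{2(2m+1)}$ and lower norms, together with $\int\|{\bf u}\|_{\mathbb H^m}^2$, which is controlled by the previous step. This is exactly why the moment $\mathbb L^{(2m+1)r}(\Omega;\mathbb V)$ appears in the hypothesis: the power $2(2m+1)$ should be the exponent produced by the interpolation, and I would calibrate the Young exponents to reach it. The martingale is again handled by BDG with $\Phi\in L_2(\mathfrak U;\mathbb H^m)$, and the resulting higher moments are closed using part (b) with $r$ replaced by $(2m+1)r$.
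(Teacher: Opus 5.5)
Your overall architecture is the paper's: a formal It\^o formula for $\tfrac12\|\partial^\beta{\bf u}\|^2$ with $|\beta|=m$, cancellation of the top-order transport term, interpolation plus Young, and closure via part (b) with $r$ replaced by $(2m+1)r$. Parts (a) and (b) are fine. The paper only cites \cite{BrDo} there, and your use of $(({\bf u}\cdot\nabla){\bf u},\Delta{\bf u})=0$ on $\mt$ is a clean way to do (b).

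The gap is in (c). The one estimate that matters there is not derived, and the route you sketch does not lead to it. Any bound $c\|{\bf u}\|_{\mathbb H^{m-1}}^a\|{\bf u}\|_{\mathbb H^m}^b$ obtained by absorbing part of the cubic term has $a+b>2$. Indeed, if the cubic term is $\|{\bf u}\|_{\mathbb H^{m-1}}^{a_0}\|{\bf u}\|_{\mathbb H^{m}}^{b_0}\|{\bf u}\|_{\mathbb H^{m+1}}^{c_0}$ with $a_0+b_0+c_0=3$ and $c_0<2$, Young leaves degree $2(3-c_0)/(2-c_0)>2$. So the $r/2$-th moment of $\int_0^T\|{\bf u}\|_{\mathbb H^{m-1}}^a\|{\bf u}\|_{\mathbb H^m}^b\dt$ has degree $(a+b)r/2>r$ in the quantities controlled at level $m-1$. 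Your induction therefore needs the level-$(m-1)$ estimate with moments strictly above $r$. That means higher moments of ${\bf u}_0$ in $\mathbb H^{m-1}$, which then cascade down the induction.

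For $m=2$ this is harmless, since $\mathbb H^{m-1}=\mathbb V$ and part (b) supplies any moment. For $m\ge3$, however, part (b) gives no control of $\mathbb H^{m-1}$-norms. Nothing in your sketch shows the bookkeeping closes under ${\bf u}_0\in\mathbb L^r_{\mathbb F}(\Omega;\mathbb H^m)\cap\mathbb L^{(2m+1)r}_{\mathbb F}(\Omega;\mathbb V)$. Nor does time integration of such a bound produce $\sup_t\|\nabla{\bf u}\|^{2(2m+1)}$: the exponent $2(2m+1)$ is read off from the hypothesis, not derived.

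The missing idea is to interpolate every factor all the way down to $\mathbb V$ (and $\mathbb H$). Then no intermediate Sobolev norm survives and no induction over $m$ is needed. The paper uses \cite[Lemma 2.1.20]{KukShi} to bound the convective contribution by $c\|{\bf u}\|_{\mathbb H^{m+1}}^{\frac{4m-1}{2m}}\|{\bf u}\|_{\mathbb V}^{\frac{m+1}{2m}}\|{\bf u}\|_{\mathbb H}^{\frac12}\le\delta\|{\bf u}\|_{\mathbb H^{m+1}}^2+c(\delta)\|{\bf u}\|_{\mathbb V}^{2(2m+1)}$.

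Starting from your own Leibniz sum, Gagliardo--Nirenberg on $\mt$ gives an even slightly better version:
\begin{itemize}
\item $\|\nabla^j{\bf u}\|_{\mathbb L^4}\le c\|\nabla{\bf u}\|^{1-\theta_j}\|\nabla^{m+1}{\bf u}\|^{\theta_j}$ with $\theta_j=\frac{2j-1}{2m}$, so $\theta_j+\theta_{m+1-j}=1$;
\item $\|\nabla^m{\bf u}\|\le\|\nabla{\bf u}\|^{1/m}\|\nabla^{m+1}{\bf u}\|^{(m-1)/m}$;
\item hence the sum is $\le c\|\nabla{\bf u}\|^{1+\frac1m}\|\nabla^{m+1}{\bf u}\|^{2-\frac1m}\le\delta\|\nabla^{m+1}{\bf u}\|^2+c(\delta)\|\nabla{\bf u}\|^{2m+2}$.
\end{itemize}
Then $\E\big(\int_0^T\|{\bf u}\|_{\mathbb V}^{2(2m+1)}\dt\big)^{r/2}\le T^{r/2}\,\E\sup_t\|{\bf u}\|_{\mathbb V}^{(2m+1)r}$ is bounded by part (b) with $r$ replaced by $(2m+1)r$. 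This is precisely where ${\bf u}_0\in\mathbb L^{(2m+1)r}_{\mathbb F}(\Omega;\mathbb V)$ enters. The rest of your argument (BDG for the martingale term, Galerkin limit) then goes through.
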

 \begin{proof}The proof of (a), (b) and (c) with $m=2$ is given in \cite[Lemma 2]{BrDo}. The proof for (c) with general $m$ follows the same strategy similar to \cite[Corollary 2.4.13]{KukShi}.
Formerly,\footnote{The proof can be made rigorous by working with a Galerkin-type approximation and show that the following estimates are uniform with respect to the dimension of the ansatz space.} one applies It\^{o}'s formula to the function $f^{\beta}(\bfu):=\tfrac{1}{2}\|\partial^\beta\bfu\|_{L^2_x}^2$ where $\beta\in \mathbb N_0^2$ is a multi-index of length $m$. 
By \cite[Lemma 2.1.20]{KukShi} and Young's inequality one can estimate the contribution of the convective term as
\begin{align*}
\bigg|\int_{\mt}\partial^\beta\mathrm{div}({\bf u}\otimes{\bf u})\cdot\partial^\beta{\bf u}\dx\bigg|&\leq\,c\|{\bf u}\|_{\mathbb H^{m+1}(\mt)^2}^{\frac{4m-1}{2m}}\|{\bf u}\|_{\mathbb V}^{\frac{m+1}{2m}}\|{\bf u}\|_{\mathbb H}^{\frac{1}{2}}\\
&\leq\delta\|{\bf u}\|_{\mathbb H^{m+1}(\mt)^2}^2+c(\delta)\|{\bf u}\|_{\mathbb V}^{2m+2}\|{\bf u}\|_{\mathbb H}^{2m}\\
&\leq\delta\|{\bf u}\|_{\mathbb H^{m+1}(\mt)^2}^2+c(\delta)\|{\bf u}\|_{\mathbb V}^{2(2m+1)},
\end{align*}
where $\delta>0$ is arbitrary. 
\end{proof}
The following result is a direct consequence, see \cite[Corollary 2]{BrDo}.
\begin{corollary}\label{cor:uholder}
\begin{enumerate}
\item Let the assumptions of Lemma \ref{lem:reg} (b) be satisfied for some $r>2$. Then we have
\begin{align}
\label{eq:holder}
C^{1/2}([0,T];\mathbb L^{r/2}_{\mathbb F}(\Omega;\mathbb H)).
\end{align}
\item Let $m\in\N$ with $m\geq 2$. Let the assumptions of Lemma \ref{lem:reg} (c) be satisfied for some $r>2$. Then we have
\begin{align}
\label{eq:holder}
C^{1/2}([0,T];\mathbb L^{r/2}_{\mathbb F}(\Omega;\mathbb H^{m-1}(\mathbb T^2)^2)).
\end{align}
\end{enumerate}
\end{corollary}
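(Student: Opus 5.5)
We read the statement as saying that ${\bf u}$ is $\tfrac12$-Hölder continuous as a map $[0,T]\to \mathbb L^{r/2}_{\mathbb F}(\Omega;X)$, with $X=\mathbb H$ in (a) and $X=\mathbb H^{m-1}(\mt)^2$ in (b). Concretely, we aim to show
\[
\E\big[\|{\bf u}(t)-{\bf u}(s)\|_X^{r/2}\big]^{2/r}\le C|t-s|^{1/2}\qquad (0\le s<t\le T).
\]

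\textbf{Step 1: decomposition.} Write the increment using the mild/strong form of the equation. Since by Lemma \ref{lem:reg} ${\bf u}\in L^2(0,T;\mathbb H^{m+1})$ with $m\ge1$, the equation holds in $X$ after applying $P_{\mathrm{HL}}$:
\[
{\bf u}(t)-{\bf u}(s)=\int_s^t\Big(\nu\Delta{\bf u}-P_{\mathrm{HL}}[({\bf u}\cdot\nabla){\bf u}]\Big)\,\dd\sigma+\Phi\big(W(t)-W(s)\big).
\]

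\textbf{Step 2: the stochastic term.} This term is handled by Burkholder–Davis–Gundy, or simply by Gaussianity:
\[
\E\|\Phi(W(t)-W(s))\|_X^{r/2}\le c\,\|\Phi\|_{L_2(\mathfrak U;X)}^{r/2}|t-s|^{r/4}.
\]
Here $\Phi\in L_2(\mathfrak U;\mathbb V)$ in (a) and $\Phi\in L_2(\mathfrak U;\mathbb H^m)\subset L_2(\mathfrak U;\mathbb H^{m-1})$ in (b).

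\textbf{Step 3: the drift term.} We use Cauchy–Schwarz in time:
\[
\Big\|\int_s^t f\,\dd\sigma\Big\|_X\le |t-s|^{1/2}\Big(\int_0^T\|f\|_X^2\,\dd\sigma\Big)^{1/2}.
\]
It therefore suffices to bound $\E\big(\int_0^T\|f\|_X^2\big)^{r/4}$.

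\emph{Case (a), $X=\mathbb H$.} We have $\|\Delta{\bf u}\|_{\mathbb L^2}\le\|{\bf u}\|_{\mathbb H^2}$, which is controlled by \eqref{eq:W22}. For the convection term, Ladyzhenskaya gives
\[
\|({\bf u}\cdot\nabla){\bf u}\|_{\mathbb L^2}\le\|{\bf u}\|_{\mathbb L^4}\|\nabla{\bf u}\|_{\mathbb L^4}\le c\,\|{\bf u}\|_{\mathbb H^1}\|{\bf u}\|_{\mathbb H^2}.
\]
Hence
\[
\int_0^T\|({\bf u}\cdot\nabla){\bf u}\|^2\le c\,\sup_t\|{\bf u}\|_{\mathbb H^1}^2\int_0^T\|{\bf u}\|_{\mathbb H^2}^2.
\]
We take the $r/4$ moment and apply Cauchy–Schwarz in $\Omega$. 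This requires $r/2$ moments of $\sup\|{\bf u}\|_{\mathbb H^1}^2$ and of $\int\|{\bf u}\|_{\mathbb H^2}^2$, i.e. exactly \eqref{eq:W22}, with exponent $r/2\cdot\tfrac12\cdot2$. This is why only $\mathbb L^{r/2}$ appears.

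\emph{Case (b), $X=\mathbb H^{m-1}$.} Here $\|\Delta{\bf u}\|_{\mathbb H^{m-1}}\le\|{\bf u}\|_{\mathbb H^{m+1}}$, which is controlled by \eqref{eq:W32}. For the convection term, $\mathbb H^m$ is an algebra for $m\ge2$ in 2D and $P_{\mathrm{HL}}$ is bounded on Sobolev spaces of the torus, so
\[
\|{\bf u}\otimes{\bf u}\|_{\mathbb H^m}\le c\,\|{\bf u}\|_{\mathbb H^m}^2,\qquad
\|P_{\mathrm{HL}}\diver({\bf u}\otimes{\bf u})\|_{\mathbb H^{m-1}}\le c\,\sup_t\|{\bf u}\|_{\mathbb H^m}^2 .
\]
The required moments then again follow from \eqref{eq:W32}.

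\textbf{Main obstacle.} The delicate point is the moment bookkeeping for the quadratic term. The product of two quantities, each with only $r/2$-th moments of their squares, yields merely $r/4$-th moments of the product. This forces the loss from $\mathbb L^r$ to $\mathbb L^{r/2}$ in $\Omega$, and the exponents must be checked carefully in the Hölder-in-$\Omega$ step. A second point needing care is the rigorous justification that the equation holds in $X$ rather than only weakly. We would argue this from the regularity in Lemma \ref{lem:reg}, which makes every term of the weak formulation an element of $L^2(0,T;X)$ (or a continuous $X$-valued martingale), so that the identity extends from test functions to $X$ by density.
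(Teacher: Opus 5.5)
Your proposal is correct and follows essentially the paper's route. The paper defers to the increment estimate behind \cite[Lemma~2]{BrDo}: the drift is handled by Cauchy--Schwarz in time together with the moment bounds of Lemma~\ref{lem:reg}, and the noise by Gaussian moments. It then takes moments of ${\bf u}(t)-{\bf u}(s)$ directly instead of passing through Kolmogorov's criterion (``swapping the roles of $t$ and $\omega$''), which is exactly what you do, including the Cauchy--Schwarz-in-$\Omega$ bookkeeping on the quadratic term that yields $\mathbb L^{r/2}$.
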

 \begin{proof}
It is proved in \cite[Lemma 2]{BrDo} 
\begin{align*}
&\E\Big[\|{\bf u}\|_{C^\alpha([0,T];\mathbb H)}\Big]^{\frac{r}{2}}<\infty,\quad
\E\Big[\|{\bf u}\|_{C^\alpha([0,T];\mathbb V)}\Big]^{\frac{r}{2}}<\infty,
\end{align*}
for all $\alpha<\frac{1}{2}$
under the assumptions of (a) and (b) with $m=2$, respectively (the second estimate is based on the regularity of
$\int_0^\cdot\Delta\bfu\ds$). 
The claim follows simply by swapping the role of the variables $t$ and $\omega$ in the argument.
 The general case follows exactly the same arguments.
\end{proof}

\subsection{Improved Regularity for pathwise solutions of the random PDE (\ref{req:stochastic-NS})}
\label{subsec:solution2} 
We may rewrite~\eqref{req:stochastic-NS} as  the following random PDE system
\begin{equation}\label{eq:random-NS}
\begin{aligned}
\begin{cases}
\partial_t{\bf y}
+ \big(({\bf y}+\Phi W)\cdot\nabla\big)\big({\bf y}+\Phi W)\big)
- \nu\,\Delta\big({\bf y}+\Phi W\big)
+ \nabla p(t) = 0,\\[0.5ex]
\diver {\bf y}(t) = 0,\\[0.5ex]
{\bf y}(0)={\bf y}_0:={\bf u}_0.
\end{cases}
\end{aligned}
\end{equation}
For each fixed $\omega\in\Omega$, this is a deterministic PDE with coefficients depending on the realization $W(\cdot,\omega)$.
Note that for $\mathbb P$-a.a. $\omega\in\Omega$
the function ${\bf y}(\omega,\cdot)$ is a solution to the Navier--Stokes equations with right-hand side
\begin{align*}
{\bf f}:=\nu \Delta [\Phi W]
-{\mathcal P} \bigl[\mathcal L^W({\bf y})\bigr].
\end{align*} 
Standard regularity results apply provided $\Phi$ is sufficiently regular.
In particular, we have $\partial_t {\bf y} \in \mathbb L^2\bigl( 0,T; \mathbb H \bigr)$ ${\mathbb P}$-a.s.
\begin{lemma}\label{lem:regadditive}
Suppose ${\bf u}_0\in \mathbb L^r_{\mathbb F}(\Omega;\mathbb H)$ for some $r>2$ and $\Phi\in L_2(\mathfrak U;\mathbb V)$. Let ${\bf u}$ be the unique weak pathwise solution to \eqref{eq:stochastic-NS} in the sense of Definition \ref{def:inc2d}.
\begin{enumerate} 
\item[(a)] Assume additionally ${\bf u}_0\in \mathbb V$ $\mathbb P$-a.s.~and $\Phi\in L_2(\mathfrak U;\mathbb H^{2}(\mt)^2)$. Then we have $\partial_t {\bf y} \in L^2\bigl( 0,T; \mathbb H\bigr)$  $\mathbb P$-a.s.~and for a.a. $t\in(0,T)$
\begin{align}\label{eq:W22y}
\int_{\mt}|\partial_t{\bf y}|^2\dx\leq\,c\,\Big[\|\Phi W\|_{\mathbb H^2(\mt)^2}^2+ \|{\bf u}\|_{\mathbb H^2(\mt)^2}^2+\|\Phi W\|_{\mathbb V}^4+\|{\bf u}\|_{\mathbb V}^4\Big].
\end{align}
\item[(b)] Assume additionally ${\bf u}_0\in \mathbb H^{2}(\mt)^2\cap \mathbb V$ $\mathbb P$-a.s.~and $\Phi\in L_2(\mathfrak U;\mathbb H^3(\mt)^2\cap\mathbb V)$. Then $\partial_t {\bf y} \in L^2\bigl( 0,T; \mathbb V) \bigr)$  $\mathbb P$-a.s.~and
\begin{align}\label{eq:W32y}
\begin{aligned}
\int_{\mathcal Q_T}|\partial_t\nabla{\bf y}|^2\dxt&\leq\,c\,\int_0^T\Big[\|\Phi W\|_{\mathbb H^3(\mt)^2}^2+\|{\bf u}\|_{\mathbb H^3(\mt)^2}^2\Big]\dt\\&+c\,\int_0^T\Big[\|\Phi W\|_{\mathbb H^2(\mt)^2}^4+\|{\bf u}\|_{\mathbb H^2(\mt)^2}^4\Big]\dt.
\end{aligned}
\end{align}
\item[(c)] Assume additionally ${\bf u}_0\in \mathbb L^r_{\mathbb F}(\Omega,\mathbb H^4(\mt)^2)\cap \mathbb L_{\mathbb F}^{9r}(\Omega,\mathbb V)$ for some $r\geq2$~and $\Phi\in L_2(\mathfrak U;\mathbb H^{4}(\mt)^2)$. Then we have
\begin{align}\label{eq:W32y}
\partial_t{\bf y}\in C^{1/2}([0,T];\mathbb L^{r/2}_{\mathbb F}(\Omega);\mathbb V).
\end{align}
\end{enumerate}
\end{lemma}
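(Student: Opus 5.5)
The key observation is that ${\bf y}={\bf u}-\Phi W$ is absolutely continuous in time with values in $\mathbb V^{-1}$. Its derivative can be read off from the weak formulation in Definition \ref{def:inc2d}, since the stochastic integral cancels:
\begin{align*}
\partial_t{\bf y}=\nu\Delta{\bf u}-P_{\mathrm{HL}}\big[({\bf u}\cdot\nabla){\bf u}\big],\qquad {\bf u}={\bf y}+\Phi W .
\end{align*}
All three statements then reduce to bounding the right-hand side using the regularity of ${\bf u}$ from Lemma \ref{lem:reg} and of $\Phi W$. The latter satisfies $\E\sup_t\|\Phi W\|_{\mathbb H^k}^q\le c\|\Phi\|_{L_2(\mathfrak U;\mathbb H^k)}^q$ by the Burkholder--Davis--Gundy inequality, and $\Phi W\in C^{\alpha}([0,T];\mathbb H^k)$ for $\alpha<\tfrac12$ by Kolmogorov's criterion. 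Throughout, $P_{\mathrm{HL}}$ is a Fourier multiplier on $\mt$ and hence bounded on every $\mathbb H^k$.

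For (a), I take $\mathbb L^2$-norms and use $\|({\bf u}\cdot\nabla){\bf u}\|_{\mathbb L^2}\le\|{\bf u}\|_{\mathbb L^4}\|\nabla{\bf u}\|_{\mathbb L^4}\le c\|{\bf u}\|_{\mathbb V}\|{\bf u}\|_{\mathbb H^2}\le c(\|{\bf u}\|_{\mathbb H^2}^2+\|{\bf u}\|_{\mathbb V}^4)$, via Ladyzhenskaya and Young. Writing ${\bf u}={\bf y}+\Phi W$ where convenient gives \eqref{eq:W22y}; the $\Phi W$ terms on its right-hand side are harmless upper bounds. Square-integrability in time then follows from Lemma \ref{lem:reg}(b), which gives ${\bf u}\in L^2(0,T;\mathbb H^2)\cap L^\infty(0,T;\mathbb V)$ $\mathbb P$-a.s. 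Part (b) is the same argument one derivative higher. I bound $\|\nabla\Delta{\bf u}\|_{\mathbb L^2}\le\|{\bf u}\|_{\mathbb H^3}$ and
$$\|\nabla[({\bf u}\cdot\nabla){\bf u}]\|_{\mathbb L^2}\le c(\|\nabla{\bf u}\|_{\mathbb L^4}^2+\|{\bf u}\|_{\mathbb L^\infty}\|{\bf u}\|_{\mathbb H^3})\le c(\|{\bf u}\|_{\mathbb H^2}^2+\|{\bf u}\|_{\mathbb H^3}^2+\|{\bf u}\|_{\mathbb H^2}^4),$$
using the embedding $\mathbb H^2\hookrightarrow\mathbb L^\infty$. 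Integrating in time and invoking Lemma \ref{lem:reg}(c) with $m=2$ yields \eqref{eq:W32y}.

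For (c), I will show $\E\|\partial_t{\bf y}(t)-\partial_t{\bf y}(s)\|_{\mathbb V}^{r/2}\le c|t-s|^{r/4}$. The linear term contributes $\nu\|\Delta({\bf u}(t)-{\bf u}(s))\|_{\mathbb V}\le c\|{\bf u}(t)-{\bf u}(s)\|_{\mathbb H^3}$. This is controlled in $\mathbb L^{r/2}_{\mathbb F}(\Omega)$ by $c|t-s|^{1/2}$ thanks to Corollary \ref{cor:uholder}(2) with $m=4$; this is exactly why $\mathbb H^4$ data and the $\mathbb L^{9r}(\Omega;\mathbb V)$ moment ($2m+1=9$) appear. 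For the nonlinear term I use the bilinear splitting $({\bf u}(t)\cdot\nabla){\bf u}(t)-({\bf u}(s)\cdot\nabla){\bf u}(s)=(\delta{\bf u}\cdot\nabla){\bf u}(t)+({\bf u}(s)\cdot\nabla)\delta{\bf u}$ with $\delta{\bf u}={\bf u}(t)-{\bf u}(s)$. Its $\mathbb H^1$-norm is at most $c\|\delta{\bf u}\|_{\mathbb H^2}(\|{\bf u}(t)\|_{\mathbb H^2}+\|{\bf u}(s)\|_{\mathbb H^2})$, because $\mathbb H^2$ is an algebra in 2D. Applying Hölder in $\Omega$ then bounds this by the product of the Hölder modulus of $\delta{\bf u}$ in $\mathbb H^3$ and $\E\sup_t\|{\bf u}\|_{\mathbb H^2}^{q}$.

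The main obstacle is this moment bookkeeping in (c). Hölder's inequality in $\Omega$ costs integrability: a product of two $\mathbb L^{r}$ quantities lands only in $\mathbb L^{r/2}$. Lemma \ref{lem:reg}(c) must therefore supply the $\sup_t\|{\bf u}\|_{\mathbb H^4}$ moments of order $r$, and the Hölder estimate for $\delta{\bf u}$ must be arranged at the matching exponent. If the exponents do not close, I will first try applying Corollary \ref{cor:uholder} with $r$ replaced by $2r$, strengthening the moment assumption accordingly, and then interpolate.
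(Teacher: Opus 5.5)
Your plan for (c) is the paper's own argument. You express $\partial_t{\bf y}=\nu\Delta{\bf u}-P_{\mathrm{HL}}[({\bf u}\cdot\nabla){\bf u}]$, take the linear increment from Corollary~\ref{cor:uholder}(b) with $m=4$, and bound the nonlinear increment by $\|\delta u\|_{\mathbb H^2}\sup_t\|u\|_{\mathbb H^2}$.

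\textbf{Gap in (c).} The moment step you flag is a genuine gap. Your fallback, rerunning Corollary~\ref{cor:uholder} with $2r$ and strengthening the moment hypotheses, would prove a weaker lemma than the one stated. Under the stated hypotheses the available bounds are:
\begin{itemize}
\item the $\mathbb H^3$-modulus of $\delta u$ in $\mathbb L^{r/2}(\Omega)$;
\item the $\mathbb H^1$-modulus in $\mathbb L^{9r/10}(\Omega)$, from Corollary~\ref{cor:uholder}(b) with $m=2$ at exponent $9r/5$; this is admissible since $5\cdot\frac{9r}{5}=9r$, and $u_0\in\mathbb L^{9r/5}(\Omega;\mathbb H^2)$ follows by interpolating $\mathbb L^{9r}(\Omega;\mathbb V)$ with $\mathbb L^{r}(\Omega;\mathbb H^4)$;
\item $\sup_t\|u\|_{\mathbb H^2}\in\mathbb L^{9r/5}(\Omega)$, from Lemma~\ref{lem:reg}(c) with $m=2$;
\item $\sup_t\|u\|_{\mathbb V}\in\mathbb L^{9r}(\Omega)$, from Lemma~\ref{lem:reg}(b).
\end{itemize}
These put $\|\delta u\|_{\mathbb H^2}$ in $\mathbb L^{9r/14}(\Omega)$. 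The product $\|\delta u\|_{\mathbb H^2}\sup_t\|u\|_{\mathbb H^2}$ then lies only in $\mathbb L^{9r/19}(\Omega)$, just short of $\mathbb L^{r/2}(\Omega)$. The paper's proof passes over exactly this point with ``taking the power $r/2$, building expectations''.

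\textbf{How to close it.} Do not put a full $\mathbb H^2$-norm on both factors. In $\nabla[(\delta u\cdot\nabla)u(t)+(u(s)\cdot\nabla)\delta u]$:
\begin{itemize}
\item the top-order factor $\|\nabla^2\delta u\|_{\mathbb L^2}$ multiplies only $\|u(s)\|_{\mathbb L^\infty}\le c\|u\|_{\mathbb V}^{1-\varepsilon}\|u\|_{\mathbb H^2}^{\varepsilon}$, whose supremum has almost $9r$ moments;
\item $\|\nabla^2u(t)\|_{\mathbb L^2}$ multiplies only $\|\delta u\|_{\mathbb L^\infty}\le c\|\delta u\|_{\mathbb H^{1+\varepsilon}}$;
\item the remaining products are of the form $\|\nabla u\|_{\mathbb L^4}\|\nabla\delta u\|_{\mathbb L^4}$.
\end{itemize}
Interpolate the norms of $\delta u$ between $\mathbb H^1$ and $\mathbb H^3$. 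A three-factor H\"older inequality then closes every term at $\mathbb L^{r/2}$ with rate $|t-s|^{r/4}$. For the main term, with exponents $\frac92,\frac{18}5,2$,
\[
\mathbb E\big[\|u(s)\|_{\mathbb L^\infty}^{r/2}\|\delta u\|_{\mathbb H^1}^{r/4}\|\delta u\|_{\mathbb H^3}^{r/4}\big]\le\big(\mathbb E\sup_t\|u\|_{\mathbb L^\infty}^{9r/4}\big)^{2/9}\big(\mathbb E\|\delta u\|_{\mathbb H^1}^{9r/10}\big)^{5/18}\big(\mathbb E\|\delta u\|_{\mathbb H^3}^{r/2}\big)^{1/2}\le c\,|t-s|^{r/4}.
\]

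\textbf{Parts (a) and (b).} The paper does not prove these itself; it only cites them from earlier work. Your direct derivation is therefore a different, self-contained route. As written, however, it bounds unsquared norms by expressions that only match the squared targets.
\begin{itemize}
\item In (b), squaring $\|u\|_{\mathbb L^\infty}\|u\|_{\mathbb H^3}$ produces $\|u\|_{\mathbb H^2}^2\|u\|_{\mathbb H^3}^2$, which the right-hand side of (b) does not contain. Use instead $\|(u\cdot\nabla)\nabla u\|_{\mathbb L^2}\le\|u\|_{\mathbb L^\infty}\|\nabla^2u\|_{\mathbb L^2}\le c\|u\|_{\mathbb H^2}^2$, so the nonlinearity contributes exactly $c\|u\|_{\mathbb H^2}^4$.
\item In (a), squaring $c\|u\|_{\mathbb V}\|u\|_{\mathbb H^2}$ gives $\|u\|_{\mathbb V}^2\|u\|_{\mathbb H^2}^2$, which is not dominated by $\|u\|_{\mathbb H^2}^2+\|u\|_{\mathbb V}^4$. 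The sharp Ladyzhenskaya bound gives $\|(u\cdot\nabla)u\|_{\mathbb L^2}^2\le c\|u\|_{\mathbb L^2}\|u\|_{\mathbb V}^2\|u\|_{\mathbb H^2}\le c(\|u\|_{\mathbb H^2}^2+\|u\|_{\mathbb L^2}^2\|u\|_{\mathbb V}^4)$. This suffices for $\partial_t{\bf y}\in L^2(0,T;\mathbb H)$ a.s., but it carries an extra factor $\|u\|_{\mathbb L^2}^2$ compared with \eqref{eq:W22y}, so your argument does not reach that literal form.
\item Both (a) and (b) assume $u_0\in\mathbb V$, respectively $\mathbb H^2$, only almost surely. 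The pathwise regularity you invoke therefore cannot be read off directly from the moment bounds of Lemma~\ref{lem:reg}. Either localize on the $\mathfrak F_0$-measurable sets $\{\|u_0\|\le K\}$ using pathwise uniqueness, or use deterministic regularity for the random PDE.
\end{itemize}
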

\begin{proof}
(a) and (b) are proved in \cite[Corollary 3.1]{BP1}. As far as (c) is concerned, we simply use \eqref{eq:random-NS} together with the temporal regularity of the Wiener process. Indeed it holds for $t,s\in[0,T]$ with $s\neq t$
\begin{align*}
\|\partial_t{\bf y}(t)-\partial_t{\bf y}(s)\|^2_{\mathbb V}&\leq\,\|\Delta \Phi W(t)-\Delta \Phi W(s)\|_{\mathbb V}^2+\|({\bf u}(t)\nabla) {\bf u}(t)-({\bf u}(s)\nabla) {\bf u}(s)\|_{\mathbb V}^2\\&+\|\Delta{\bf u}(t)-\Delta{\bf u}(s)\|^2_{\mathbb V}\\
&\leq\,\|\Phi (W(t)-W(s))\|_{\mathbb H^3(\mt)^2}^2+\|({\bf u}(t)\nabla) {\bf u}(t)-({\bf u}(s)\nabla) {\bf u}(s)\|_{\mathbb V}^2\\&+\|{\bf u}(t)-{\bf u}(s)\|^2_{\mathbb H^3(\mt)^2}.
\end{align*}
The last term can bounded by means of Corollary \ref{cor:uholder} (b), the first one by the assumptions on $\Phi$. As for the second one we have
\begin{align*}
\|({\bf u}(t)\nabla) {\bf u}(t)-({\bf u}(s)\nabla) {\bf u}(s)\|_{\mathbb V}^2&\leq \|\nabla {\bf u}(t)-\nabla{\bf u}(s)\|_{\mathbb L^{4}(\mt)^4}^2\|\mathbf u(t)\|_{\mathbb L^4(\mt)^2}\\&+c\|{\bf u}(t)-{\bf u}(s)\|_{\mathbb L^{4}(\mt)^2}^2\|\nabla{\mathbf u}\|^2_{\mathbb L^4(\mt)^{4}}\\&\leq c\|{\bf u}(t)-{\bf u}(s)\|_{\mathbb H^{2}(\mt)^2}^2\sup_{t\in[0,T]}\|{\bf u}(t)\|_{\mathbb H^{2}(\mt)^2}^2.
\end{align*}
Now taking the power $r/2$, building expectations and taking the supremum with respect to $s$ and $t$ yields the claim.
\end{proof}

\section{Scheme and Strong Rates in Probability}\label{theo-2}



\subsection{Time--discretisation scheme}\label{sec: time-discretization}

Let $\{t_n=n\tau\}_{n=0}^N$ be a uniform partition of $[0,T]$ with time
step $\tau>0$. For $n\ge0$, we define the discrete midpoint and the BDF2
extrapolated values by
\[
{{\bf y}}^{n+\frac12} := \tfrac12({\bf y}_{n+1}+{\bf y}_n), 
\qquad 
{\bf y}_\star^{n+\frac12}:=\tfrac32\,{\bf y}_n-\tfrac12\,{\bf y}_{n-1},
\quad n\ge 0,\qquad {\bf y}_{-1}:={\bf y}_0.
\]
On each sub-interval $I_n=[t_n,t_{n+1}]$ we introduce the
fine mesh
\[
t_{n,\ell}=t_n+\ell\,\tau^2,\qquad \ell=0,\dots,M,\qquad M=\tau^{-1}.
\]

We now describe the various quadrature and approximation ingredients
used in the construction of the scheme.

\begin{itemize}[leftmargin=2em]
  \item[(i)] \textbf{Brownian quadrature on a fine mesh.}
  We approximate the time average of the Brownian motion on $I_n$,
  \[
    \mathcal Q_n^W:=\frac1{\tau}\int_{t_n}^{t_{n+1}} W(s)\,\mathrm{d}s,
  \]
  by the Riemann sum
  \[
    \mathcal I_n^W:=\sum_{\ell=1}^{M}\tau\, W(t_{n,\ell}).
  \]
  The mean-square quadrature error satisfies (see the proof of \cite[Eq.~(3.10)]{CP1})
  \begin{equation}\label{eq:QWvsIW}
  \mathbb E\!\left[|\mathcal Q_n^W-\mathcal I_n^W|^2\right]
  \ \le\ C\,\tau^{3},
  \end{equation}
  for some constant $C>0$ independent of $n$ and $\tau$.

  \item[(ii)] \textbf{Matrix-valued Brownian triple integral.}
  We define the (matrix-valued) triple integral
  \begin{equation*}
  \mathcal{Q}^{W^2}_n
  :=\frac{1}{\tau^3}\int_{t_n}^{t_{n+1}}\!\int_{t_n}^{t_{n+1}}\!\int_{t_n}^{t_{n+1}}
      \bigl(\Phi W(t)-\Phi W(s)\bigr)\otimes\bigl(\Phi W(t)-\Phi W(r)\bigr)\,
      \mathrm{d}s\,\mathrm{d}t\,\mathrm{d}r.
  \end{equation*}
  Its discrete approximation on the fine mesh is
  \begin{align*}
  \mathcal{I}^{W^2}_{n}
  &:= \tau^3\sum_{\ell=1}^{M}\sum_{k=1}^{M}\sum_{j=1}^{M}
      \bigl(\Phi W(t_{n,\ell})-\Phi W(t_{n,k})\bigr)\otimes
      \bigl(\Phi W(t_{n,\ell})-\Phi W(t_{n,j})\bigr)\\
      &=\tau\sum_{\ell=1}^{M}
      \bigl(\Phi W(t_{n,\ell})-\Phi \mathcal{I}_n^W\bigr)\otimes
      \bigl(\Phi W(t_{n,\ell})-\Phi \mathcal{I}_n^W\bigr),
  \end{align*}
  which satisfies the mean-square bound (see Lemma~\ref{lem:triple-BM})
  \begin{equation}\label{eq:QW2vsIW2}
  \mathbb{E}\big[\|\mathcal{Q}_n^{W^2}-\mathcal{I}_n^{W^2}\|_{{\mathbb{L}^2({\mathbb{T}^2})^{4}}}^2\big]
  \le C_{\Phi}\,\tau^3.
  \end{equation}
  \item[(iii)] \textbf{Integral involving the convective term.}
  We denote the time average of the convective term on $I_n$ by
  \[
  \mathcal{Q}_n^{{\bf y}^2}:=\frac{1}{\tau}\int_{t_n}^{t_{n+1}}
  \diver\big(({\bf y}(t)+\Phi W(t))\otimes ({\bf y}(t)+\Phi W(t))\big)\,\mathrm{d}t.
  \]
  Its discrete approximation is chosen as
  \[
  \mathcal{I}_{n}^{{\bf y}^2}
  :=\diver\Big(\big(\bar{\bf u}_\star^{n+\frac{1}{2}}+\Phi\mathcal{I}_n^W\big)
               \otimes\big(\bar{\bf u}^{n+\frac{1}{2}}+\Phi \mathcal{I}_n^W\big)\Big)
    + \diver \mathcal{I}_{n}^{W^2},
  \]
  where
  \[
  \bar{{\bf y}}^{n+\frac12}:=\frac{{\bf y}(t_n)+{\bf y}(t_{n+1})}{2},
  \qquad 
  \bar{\bf y}_*^{n+\frac12}:=\frac{3}{2}{\bf y}(t_n)-\frac{1}{2}{\bf y}(t_{n-1}),
  \qquad {\bf y}(t_{-1}):={\bf y}_0.
  \]

  \item[(i{\bf v })] \textbf{Integral involving the diffusive term.}
  We denote the averaged diffusive term on $I_n$ by
  \[
  \mathcal{Q}_n^{\Delta {\bf y}}
  :=\frac{1}{\tau}\int_{t_n}^{t_{n+1}}\Delta \big[{\bf y}(t)+\Phi W(t)\big]\,\mathrm{d}t,
  \]
  and approximate it by
  \[
  \mathcal{I}_{n}^{\Delta {\bf y}}
  :=\Delta\big[\bar{{\bf y}}^{n+\frac12}+\Phi\mathcal{I}_n^W\big].
  \]
\end{itemize}

The above ingredients motivate the following time--semi--discretisation
of the random PDE~\eqref{eq:random-NS}.

\newpage

\begin{scheme}
\medskip\noindent
\textbf{Linear Crank--Nicolson scheme for the random PDE~\eqref{eq:random-NS}.} For all $n\ge 0$, given ${\bf y}_n,{\bf y}_{n-1}\in\mathbb V$ (with ${\bf y}_{-1}={\bf y}_0$), find
$({\bf y}_{n+1},p_{n+1})\in\mathbb H^1(\mt)^2\times\Qspace$ such that, for all
$({\boldsymbol{\varphi}},q)\in\mathbb H^1(\mt)^2\times\Qspace$,
\begin{align}\label{scheme:main0}
\begin{cases}
\displaystyle
\Big(\frac{{\bf y}_{n+1}-{\bf y}_n}{\tau},{\boldsymbol{\varphi}}\Big)
+ \mathcal C\!\big({\bf y}_\star^{n+\frac12}+\Phi\mathcal I_n^W,\;
                   {{\bf y}}^{n+\frac12}+\Phi\mathcal I_n^W,\;
                   {\boldsymbol{\varphi}}\big)
 -\big(\mathcal{I}_n^{W^2},\nabla{\boldsymbol{\varphi}}\big)
\\[1ex]\qquad\qquad\qquad\qquad\qquad\qquad
+ \nu\,\big(\nabla({{\bf y}}^{n+\frac12}+\Phi\mathcal I_n^W),\nabla{\boldsymbol{\varphi}}\big)
 - (p_{n+1},\mathrm{div}\,{\boldsymbol{\varphi}})
 =0,\\[1ex]
(\mathrm{div}\, {\bf y}_{n+1},q)=0.
\end{cases}
\end{align}
\end{scheme}

Note that the convective term in \eqref{scheme:main0} is linear in
${\bf y}_{n+1}$; the nonlinearity appears only through the extrapolated,
divergence-free advecting field ${\bf y}_\star^{n+\frac12}+\Phi\mathcal I_n^W$.

\subsection{A numerical scheme for the SPDE~\eqref{eq:stochastic-NS}}
\label{snse_scheme}

We now rewrite the time-discrete random PDE scheme in terms of the
original stochastic variable $X$. For all $n\ge 0$ we define the
Brownian quadrature correction terms
\[
\mathcal{J}_{\star}^{n+\frac{1}{2}}
:=\mathcal{I}_n^W-\frac{3}{2}W(t_n)+\frac{1}{2}W(t_{n-1}),
\qquad W(t_{-1})=W(0)=0,
\]
\[
\mathcal{J}^{n+\frac12}
:=\mathcal{I}_n^W-\frac{1}{2}\big(W(t_{n+1})+W(t_{n})\big),
\qquad
\Delta_{n+1}W:=W(t_{n+1})-W(t_{n}).
\]
We then define the discrete transformation
\begin{equation}\label{discrete transformation}
{\bf u}_{n}:={\bf y}_n+\Phi W(t_n),
\qquad n\ge 0.
\end{equation}
By construction, the sequence $\{({\bf u}_n,p_n)\}_{n\ge 1}$ satisfies the
following time--semi--discrete scheme for the original SPDE
\eqref{eq:stochastic-NS}.

\medskip

\begin{scheme}
\medskip\noindent
\textbf{Modified Crank--Nicolson scheme for the SPDE~\eqref{eq:stochastic-NS}.}  For all $n\ge 0$, given ${\bf u}_n,{\bf u}_{n-1}\in\Vspace$ (with ${\bf u}_{-1}={\bf u}_0$),
find $({\bf u}_{n+1},p_{n+1})\in\mathbb H^1(\mt)^2\times\Qspace$ such that, for all
$({\boldsymbol{\varphi}},q)\in\mathbb H^1(\mt)\times\Qspace$,
\begin{align}\label{scheme:main}
\begin{cases}
\displaystyle
\Big(\frac{{\bf u}_{n+1}-{\bf u}_n}{\tau},{\boldsymbol{\varphi}}\Big)
+ \mathcal C\!\big({\bf u}_\star^{n+\frac12}+\Phi\mathcal J_\star^{n+\frac{1}{2}},\;
                   {\bf u}^{n+\frac12}+\Phi\mathcal J^{n+\frac{1}{2}},\;
                   {\boldsymbol{\varphi}}\big)
 -\big(\mathcal{I}_n^{W^2},\nabla{\boldsymbol{\varphi}}\big)
\\[1ex]\qquad\qquad
+ \nu\,\big(\nabla({\bf u}^{n+\frac12}+\Phi\mathcal J^{n+\frac{1}{2}}),\nabla{\boldsymbol{\varphi}}\big)
 - (p_{n+1},\mathrm{div}\,{\boldsymbol{\varphi}})
 =\bigl(\Phi,{\boldsymbol{\varphi}} \bigr)\frac{\Delta_{n+1}W}{\tau},\\[1ex]
(\mathrm{div}\, {\bf u}_{n+1},q)=0,
\end{cases}
\end{align}
where, analogously to ${{\bf y}}^{n+\frac12}$ and ${\bf y}_\star^{n+\frac12}$, we set
\[
{\bf u}^{n+\frac12}:=\tfrac12({\bf u}_{n+1}+{\bf u}_n),
\qquad
{\bf u}_\star^{n+\frac12}:=\tfrac32 {\bf u}_n-\tfrac12 {\bf u}_{n-1},
\qquad {\bf u}_{-1}:={\bf u}_0.
\]
\end{scheme}

\medskip

For practical implementation, it is convenient to summarize the scheme
\eqref{scheme:main} in algorithmic form.

\begin{algorithm}[H]
\caption{Modified Crank--Nicolson scheme for the SPDE~\eqref{eq:stochastic-NS}}
\label{alg:MCN-SPDE}
\begin{algorithmic}[1]
\State \textbf{Input:} Final time $T>0$, time step $\tau>0$, $N:=T/\tau$,
       initial velocity ${\bf u}_0\in\Vspace$.
\State \textbf{Initialization:} Set ${\bf u}_{-1}:={\bf u}_0$, $W(t_0):=0$.
\For{$n=0,\dots,N-1$}
  \State Generate the Brownian increment
         $\Delta_{n+1}W:=W(t_{n+1})-W(t_n)$.
  \State Construct the fine mesh
         $t_{n,\ell}=t_n+\ell\,\tau^2$, $\ell=0,\dots,M$, $M=\tau^{-1}$.
  \State Compute the Brownian quadratures
  \[
    \mathcal I_n^W:=\sum_{\ell=1}^{M}\tau\,W(t_{n,\ell}),\qquad
    \mathcal{I}_n^{W^2}
    := \tau\sum_{\ell=1}^{M}
       \bigl(\Phi W(t_{n,\ell})-\Phi\mathcal{I}_n^W\bigr)\otimes
       \bigl(\Phi W(t_{n,\ell})-\Phi \mathcal{I}_n^W\bigr).
  \]
  \State Define the correction terms
  \[
    \mathcal{J}_{\star}^{n+\frac{1}{2}}
    :=\mathcal{I}_n^W-\frac{3}{2}W(t_n)+\frac{1}{2}W(t_{n-1}),\qquad
    \mathcal{J}^{n+\frac12}
    :=\mathcal{I}_n^W-\frac{1}{2}\big(W(t_{n+1})+W(t_{n})\big).
  \]
  \State Form the midpoint and extrapolated velocities
  \[
    {\bf u}^{n+\frac12}:=\tfrac12({\bf u}_{n+1}+{\bf u}_n),\qquad
    {\bf u}_\star^{n+\frac12}:=\tfrac32 {\bf u}_n-\tfrac12 {\bf u}_{n-1}.
  \]
  \State Find $({\bf u}_{n+1},p_{n+1})\in\Vspace\times\Qspace$ such that, for all
         $(\boldsymbol{\varphi},q)\in\Vspace\times\Qspace$,
  \[
  \begin{aligned}
    \Big(\tfrac{{\bf u}_{n+1}-{\bf u}_n}{\tau},{\boldsymbol{\varphi}}\Big)
    &+ \mathcal C\!\big({\bf u}_\star^{n+\frac12}+\Phi\mathcal J_\star^{n+\frac{1}{2}},\;
                        {\bf u}^{n+\frac12}+\Phi\mathcal J^{n+\frac{1}{2}},\;
                        {\boldsymbol{\varphi}}\big)
     -\big(\mathcal{I}_n^{W^2},\nabla{\boldsymbol{\varphi}}\big)\\
    &\quad+ \nu\,\big(\nabla({\bf u}^{n+\frac12}+\Phi\mathcal J^{n+\frac{1}{2}}),\nabla{\boldsymbol{\varphi}}\big)
     - (p_{n+1},\mathrm{div}\,{\boldsymbol{\varphi}})
     =\bigl(\Phi,{\boldsymbol{\varphi}} \bigl)\frac{\Delta_{n+1}W}{\tau},\\[0.5ex]
    (\mathrm{div}\, {\bf u}_{n+1},q)&=0.
  \end{aligned}
  \]
\EndFor
\State \textbf{Output:} Approximations $\{({\bf u}_n,p_n)\}_{n=1}^N$ to the
       velocity and pressure of \eqref{eq:stochastic-NS}.
\end{algorithmic}
\end{algorithm}

\subsection{Strong rate of convergence for the velocity field}\label{error_velocity}
For any fixed large enough $R>0$ we introduce the high--probability subset
\[
  \Omega_R
  :=\Big\{\omega\in\Omega:\ \sup_{t\in[0,T]}\|{\bf {Y}}(t,\omega)\|_{\mathbb H^2({\mathbb{T}^2})^2}\le R\Big\},
  \qquad {\bf {Y}}(t):={\bf y}(t)+\Phi W(t),
\]
and denote its indicator function by
\[
  \mathcal A_R:=\mathbf{1}_{\Omega_R}.
\]
\textbf{Solution assumptions:} We collect here the regularity assumptions that will be
used later in the error analysis.

\begin{itemize}[leftmargin=2em]
\item \textbf{Regularity of the velocity.}  
We assume that the (pathwise) solution $u$ of \eqref{eq:random-NS} satisfies
\begin{equation}\label{regularity}
\begin{aligned}
\mathcal{A}_R {\bf y}&\in \mathbb{L}^2_{\mathbb{F}}\big(\Omega;C([0,T];\mathbb{H}^2({\mathbb{T}^2})^2)\big)
\cap C^{1}\big([0,T]; \mathbb{L}^4(\Omega;\mathbb{V})\big),\\&\qquad\qquad\mathcal{A}_R \partial_t{\bf y}\in C^{1/2}\big([0,T]; \mathbb{L}^2(\Omega;\mathbb{V})\big).
\end{aligned}
\end{equation}
As a consequence of Lemma \ref{lem:regadditive} the velocity field belongs to these spaces
(even without restricting to $\Omega_R$) provided we have
$${\bf u}_0\in \mathbb L^4_{\mathbb F}(\Omega,\mathbb H^4(\mt)^2)\cap \mathbb L_{\mathbb F}^{36}(\Omega,\mathbb V),\quad\Phi\in L_2(\mathfrak U;\mathbb H^{4}(\mt)^2).$$
\end{itemize}

On the event $\Omega_R$ we have the uniform bound
\begin{equation}\label{eq:UR-H1-bound}
  \|{\bf {Y}}(t)\|_{\mathbb H^2({\mathbb{T}^2})^2}\le R,
  \qquad t\in[0,T].
\end{equation}

Our first main result is the following mean-square error estimate for
the velocity.

\begin{theorem}[First main result]\label{thm:main-local}
Let the assumptions~\eqref{regularity} hold. Then, for every
$R>0$ there exists a constant $C>0$, independent of $\tau$ and $N$, such
that
\begin{equation}\label{eq:main-local-L2}
  \max_{0\le n\le N}
  \mathbb E\big[\mathcal{A}_R\|{\bf y}(t_n)-{\bf y}_n\|_{\mathbb L^2(\mt)^2}\big]
  \ \le\ {\bf e }^{C\,R^2}\,\tau^{3},
\end{equation}
and
\begin{equation}\label{eq:main-local-H1}
  \nu\,\tau\sum_{n=0}^{N-1}
  \mathbb E\big[\mathcal{A}_R\|\nabla(\bar{{\bf y}}^{n+\frac12}-{{\bf y}}^{n+\frac12})\|_{\mathbb L^2(\mt)^4}^2\big]
  \ \le\ {\bf e }^{C\,R^2}\,\tau^{3}.
\end{equation}
By using the transformation ${\bf u}(t)={\bf y}(t)+\Phi W(t)$ and
${\bf u}_n={\bf y}_n+\Phi W(t_n)$, these bounds are equivalent to
\[
  \max_{0\le n\le N}
  \mathbb E\big[\mathcal{A}_R\|{\bf u}(t_n)-{\bf u}_n\|_{\mathbb L^2(\mt)^2}^2\big]
  \ \le\ {\bf e }^{C\,R^2}\,\tau^{3},
\]
and
\[
  \nu\,\tau\sum_{n=0}^{N-1}
  \mathbb E\big[\mathcal{A}_R\|\nabla(\bar{{\bf X}}^{n+\frac12}-{\bf u}^{n+\frac12})\|_{\mathbb L^2(\mt)^4}^2\big]
  \ \le\ {\bf e }^{C\,R^2}\,\tau^{3},
\]
where $\bar{{\bf X}}^{n+\frac12}:=\tfrac12\big({\bf u}(t_n)+{\bf u}(t_{n+1})\big)$ and
${\bf u}^{n+\frac12}:=\tfrac12({\bf u}_{n+1}+{\bf u}_n)$.
\end{theorem}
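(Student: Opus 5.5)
We prove the estimate by a discrete energy argument on the error $e_n:={\bf y}(t_n)-{\bf y}_n$, localized to $\Omega_R$. Let $\bar{\bf e}^{n+\frac12}:=\bar{\bf y}^{n+\frac12}-{\bf y}^{n+\frac12}$ and $e_\star^{n+\frac12}:=\bar{\bf y}_\star^{n+\frac12}-{\bf y}_\star^{n+\frac12}$. First we integrate \eqref{eq:random-NS} over $I_n$ and divide by $\tau$. This shows that the exact solution satisfies \eqref{scheme:main0} with ${\bf y}_n$ replaced by ${\bf y}(t_n)$, a pressure $\frac1\tau\int_{I_n}p\,\mathrm ds$, and an additional consistency residual $\mathcal R_n\in\mathbb V^{-1}$.

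The residual splits as
\[
\mathcal R_n=\nu\big(\mathcal Q_n^{\Delta{\bf y}}-\mathcal I_n^{\Delta{\bf y}}\big)-\big(\mathcal Q_n^{{\bf y}^2}-\mathcal I_n^{{\bf y}^2}\big).
\]
The diffusive part we split further into a trapezoidal error for $\Delta{\bf y}$ and the Brownian quadrature error $\Delta\Phi(\mathcal Q_n^W-\mathcal I_n^W)$. The trapezoidal error is bounded by $\tau^{1/2}\,[\partial_t{\bf y}]_{C^{1/2}(I_n;\mathbb V)}$ in $\mathbb V^{-1}$, using the Crank--Nicolson estimate of \cite{DM1} for integrands with Hölder continuous derivatives together with \eqref{regularity}. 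The Brownian quadrature error is controlled by \eqref{eq:QWvsIW}.

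For the convective part we expand $(\mathbf a+\Phi W(t))\otimes(\mathbf b+\Phi W(t))$ around the averages. The purely stochastic fluctuation $\frac1\tau\int_{I_n}(\Phi W-\Phi\mathcal Q_n^W)^{\otimes2}$ is exactly $\mathcal Q_n^{W^2}$, rewritten. Its replacement by $\mathcal I_n^{W^2}$ costs $\tau^{3/2}$ by \eqref{eq:QW2vsIW2}. The mixed terms of the form $\frac1\tau\int_{I_n}({\bf y}(t)-\bar{\bf y}^{n+\frac12})\otimes\Phi(W(t)-\mathcal Q_n^W)\,\mathrm dt$ are $O(\tau\cdot\tau^{1/2})$ because ${\bf y}\in C^1$. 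The extrapolation error $\bar{\bf y}_\star^{n+\frac12}-\bar{\bf y}^{n+\frac12}=O(\tau^{3/2})$ in $\mathbb V$ follows again from the $C^{1/2}$-regularity of $\partial_t{\bf y}$. Altogether we aim for $\mathbb E[\mathcal A_R\|\mathcal R_n\|_{\mathbb V^{-1}}^2]\le C\tau^3$.

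Next we subtract the two identities and test with $\boldsymbol\varphi=\bar{\bf e}^{n+\frac12}\in\mathbb V$, which removes the pressures. The time difference yields $\frac1{2\tau}(\|e_{n+1}\|^2-\|e_n\|^2)$, and diffusion yields $\nu\|\nabla\bar{\bf e}^{n+\frac12}\|^2$. For the convection we write
\[
\mathcal C(\mathbf A,\mathbf B,\cdot)-\mathcal C(\mathbf a,\mathbf b,\cdot)=\mathcal C(\mathbf a,\mathbf B-\mathbf b,\cdot)+\mathcal C(\mathbf A-\mathbf a,\mathbf B,\cdot),
\]
where $\mathbf A,\mathbf B$ are the exact advecting and advected fields and $\mathbf a,\mathbf b$ the discrete ones. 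The first term vanishes by Lemma \ref{lem:transport}, since ${\bf y}_\star^{n+\frac12}+\Phi\mathcal I_n^W$ is divergence free. In the second term the exact field $\bar{\bf y}^{n+\frac12}+\Phi\mathcal I_n^W$ appears. On $\Omega_R$ this field is bounded in $\mathbb H^2\subset W^{1,4}$ by a constant times $R$, plus a Brownian quadrature that is harmless after taking expectations. This gives
\[
|\mathcal C(e_\star^{n+\frac12},\cdot,\bar{\bf e}^{n+\frac12})|\le CR\|e_\star^{n+\frac12}\|\,\|\nabla\bar{\bf e}^{n+\frac12}\|\le\tfrac\nu4\|\nabla\bar{\bf e}^{n+\frac12}\|^2+\tfrac{CR^2}{\nu}\big(\|e_n\|^2+\|e_{n-1}\|^2\big).
\]
Because the extrapolation uses only old errors, this bound involves only $e_n,e_{n-1}$ and no unknown-dependent bound on the discrete solution. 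This is exactly where the linearized scheme avoids the missing discrete energy bound. The residual is handled by $\langle\mathcal R_n,\bar{\bf e}^{n+\frac12}\rangle\le\frac\nu4\|\nabla\bar{\bf e}^{n+\frac12}\|^2+C\|\mathcal R_n\|_{\mathbb V^{-1}}^2$.

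Finally we multiply by $2\tau\mathcal A_R$ and sum. Since $\Omega_R$ is defined through the whole path, $\mathcal A_R$ is a fixed indicator and commutes with the summation. Taking expectations and applying the discrete Gronwall lemma then gives $\max_n\mathbb E[\mathcal A_R\|e_n\|^2]+\nu\tau\sum_n\mathbb E[\mathcal A_R\|\nabla\bar{\bf e}^{n+\frac12}\|^2]\le e^{CR^2T}\tau^3$. The first bound \eqref{eq:main-local-L2} follows from this, and \eqref{eq:main-local-H1} is the dissipation term. The bounds for ${\bf u}$ follow because the $\Phi W(t_n)$ terms cancel under the transformation.

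The main obstacle is the consistency estimate for the convective residual at order $\tau^{3/2}$ in $\mathbb L^2(\Omega;\mathbb V^{-1})$. The leading stochastic fluctuation is of order $\tau$ and must be matched exactly by $\mathcal I_n^{W^2}$; any mismatch at the tensor level would reduce the rate to $\tau$. A second difficulty is that the Brownian quadratures in the advecting field are unbounded random variables even on $\Omega_R$. We would handle these with Hölder's inequality in $\omega$ and the $\mathbb L^4(\Omega)$-bounds in \eqref{regularity}. If this fails, we would enlarge $\Omega_R$ so that it also controls $\sup_n\|\Phi\mathcal I_n^W\|_{\mathbb H^2}$.
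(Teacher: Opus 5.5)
Your proposal follows the paper's proof essentially step for step: the time-averaged equation and consistency residual, the identification of the pure noise fluctuation with $\mathcal Q_n^{W^2}$ and its replacement by $\mathcal I_n^{W^2}$ via \eqref{eq:QW2vsIW2}, testing with the midpoint error, the transport cancellation for the divergence-free extrapolated field, Young's inequality for the defect $\mathcal C({\bf e}_\star^{n+\frac12},\cdot,{\bf e}^{n+\frac12})$, and Gronwall on the fixed event $\Omega_R$. Three corrections are needed.

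\emph{First}, at $n=0$ the convention ${\bf y}_{-1}={\bf y}_0$ gives $\bar{\bf y}_\star^{\frac12}-\bar{\bf y}^{\frac12}=-\tfrac12\big({\bf y}(t_1)-{\bf y}_0\big)=O(\tau)$. So your uniform claim $\mathbb E[\mathcal A_R\|\mathcal R_n\|_{\mathbb V^{-1}}^2]\le C\tau^3$ fails there, and only $C\tau^2$ holds. This is harmless: that single term enters the summed estimate with a factor $\tau$, which is exactly the paper's $\tau^2\mathbf 1_{\{n=0\}}$ term.

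\emph{Second}, the trapezoidal defect is of order $\tau^{3/2}[\partial_t{\bf y}]_{C^{1/2}}$, not $\tau^{1/2}[\partial_t{\bf y}]_{C^{1/2}}$.

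\emph{Third}, you are right that the advected exact field $\bar{\bf y}^{n+\frac12}+\Phi\mathcal I_n^W=\tfrac12\big({\bf Y}(t_n)+{\bf Y}(t_{n+1})\big)+\Phi\mathcal J^{n+\frac12}$ has a Gaussian part that is not controlled on $\Omega_R$. However, H\"older in $\omega$ with the $\mathbb L^4(\Omega)$ bounds of \eqref{regularity} does not resolve this. Those bounds concern ${\bf y}$, whereas the term $\mathbb E[\mathcal A_R\|\Phi\mathcal J^{n+\frac12}\|_{\mathbb H^2}^2\|{\bf e}_\star^{n+\frac12}\|_{\mathbb L^2}^2]$ would then require a moment of the error higher than the second one you propagate. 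The Gronwall argument therefore does not close. Your fallback is the right fix: add, e.g., $\sup_{t}\|\Phi W(t)\|_{\mathbb H^2}\le R$ to the definition of $\Omega_R$, which still gives $\mathbb P(\Omega_R)\to1$ as $R\to\infty$. This bounds the advected field by $3R$ pathwise. It also repairs the paper's own unjustified assertion $\|{\bf Y}^{n+\frac12}\|_{\mathbb V}\le R$ on $\Omega_R$, which the paper uses in the residual lemma and implicitly in the convection defect.
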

Making a suitable choice of $R$, for instance
$R=\sqrt{C^{-1}\log \tau^{-\varepsilon}}$ with $\varepsilon>0$ arbitrary, we obtain the following result concerning the convergence in probability arguing as in 
\cite{BP1,BP2}.
\begin{corollary}
Under the assumptions of Theorem \ref{thm:main-local}
we have for any $\xi>0$, $\alpha<3/2$ 
\begin{align*}
&\max_{1\leq n\leq N}\mathbb P\bigg[\|{\bf y}(t_n)-{\bf y}_{n}\|_{\mathbb L^2(\mt)^2}^2+\sum_{m=1}^{n-1} \tau\|\nabla\bar{\bf y}^{m+\frac{1}{2}}-\nabla{\bf y}^{m+\frac{1}{2}}\|_{\mathbb L^2(\mt)^4}^2>\xi\,\tau^{2\alpha}\bigg]\rightarrow 0,\\
&\max_{1\leq n\leq N}\mathbb P\bigg[\|{\bf u}(t_n)-{\bf u}_{n}\|_{\mathbb L^2(\mt)^2}^2+\sum_{m=1}^{n-1} \tau\|\nabla\bar{\bf X}^{m+\frac{1}{2}}-\nabla{\bf u}^{m+\frac{1}{2}}\|_{\mathbb L^2(\mt)^4}^2>\xi\,\tau^{2\alpha}\bigg]\rightarrow 0,
\end{align*}
as $\tau\rightarrow0$
\end{corollary}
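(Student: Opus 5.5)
The plan is to deduce the corollary from Theorem \ref{thm:main-local} by a Markov-inequality argument on $\Omega_R$, with $R=R(\tau)$ growing slowly as $\tau\to0$; the complement $\Omega\setminus\Omega_R$ is handled separately, as in \cite{BP1,BP2}. Fix $n$ and denote by $\mathcal E_n$ the random variable inside the probability, i.e.\ $\|{\bf y}(t_n)-{\bf y}_n\|^2_{\mathbb L^2}+\sum_{m}\tau\|\nabla(\bar{\bf y}^{m+\frac12}-{\bf y}^{m+\frac12})\|^2_{\mathbb L^2}$. Then split
\[
\mathbb P\big[\mathcal E_n>\xi\tau^{2\alpha}\big]\le \mathbb P\big[\{\mathcal E_n>\xi\tau^{2\alpha}\}\cap\Omega_R\big]+\mathbb P\big[\Omega\setminus\Omega_R\big].
\]

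For the first term, apply Markov's inequality to $\mathcal A_R\mathcal E_n$. Both \eqref{eq:main-local-L2} (read in its squared form, as in the ${\bf u}$-version of the theorem) and \eqref{eq:main-local-H1} give $\mathbb E[\mathcal A_R\mathcal E_n]\le 2\nu^{-1}{\bf e}^{CR^2}\tau^3$. Hence the first term is bounded by $c\,\xi^{-1}{\bf e}^{CR^2}\tau^{3-2\alpha}$, uniformly in $n$. Now choose $R^2=C^{-1}\varepsilon\log\tau^{-1}$, so that ${\bf e}^{CR^2}=\tau^{-\varepsilon}$. Taking $0<\varepsilon<3-2\alpha$, which is possible since $\alpha<3/2$, the bound becomes $c\,\xi^{-1}\tau^{3-2\alpha-\varepsilon}\to0$.

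For the second term, note that $R(\tau)\to\infty$ as $\tau\to0$. By Markov's inequality,
\[
\mathbb P[\Omega\setminus\Omega_R]\le R^{-2}\,\mathbb E\Big[\sup_t\|{\bf Y}(t)\|_{\mathbb H^2}^2\Big].
\]
The expectation on the right is finite: ${\bf Y}={\bf u}$, so Lemma \ref{lem:reg}(c) with $m=2$ applies, and the data assumptions behind \eqref{regularity} suffice. Therefore this term is $O(1/\log\tau^{-1})\to0$, again uniformly in $n$. Combining the two terms gives the first claim.

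The second claim, for ${\bf u}$, is identical. The discrete transform \eqref{discrete transformation} and ${\bf u}(t)={\bf y}(t)+\Phi W(t)$ make the differences ${\bf u}(t_n)-{\bf u}_n$ and ${\bf y}(t_n)-{\bf y}_n$ coincide pathwise. Likewise, $\bar{\bf X}^{m+\frac12}-{\bf u}^{m+\frac12}=\bar{\bf y}^{m+\frac12}-{\bf y}^{m+\frac12}$. So the random variable inside the probability is the same.

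The only delicate point is the balance in the choice of $R$. It must grow, so that $\mathbb P[\Omega\setminus\Omega_R]\to0$, but only logarithmically in $\tau^{-1}$, so that the exponential constant ${\bf e}^{CR^2}$ costs just an arbitrarily small power $\tau^{-\varepsilon}$. This is exactly why only $\alpha<3/2$, rather than $\alpha=3/2$, is obtained.
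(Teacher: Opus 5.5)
Your proposal is correct and follows the paper's route. The paper's proof consists precisely of the choice $R=\sqrt{C^{-1}\log\tau^{-\varepsilon}}$ (so that $e^{CR^2}=\tau^{-\varepsilon}$), combined with the Markov-inequality argument on $\Omega_R$ and its complement from \cite{BP1,BP2}, which you have written out in detail. As in the paper, this tacitly requires the constant $C$ in Theorem~\ref{thm:main-local} to be independent of $R$, which holds when \eqref{regularity} is satisfied without localization (e.g.\ under the stated data assumptions); your bound $\mathbb P[\Omega\setminus\Omega_R]\to0$ via Lemma~\ref{lem:reg}(c) then completes the argument correctly.
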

The remainder of this subsection is devoted to the proof of
Theorem~\ref{thm:main-local}. The argument consists of three main
steps:
\begin{enumerate}[label=\roman*),leftmargin=2em]
  \item a detailed bound for the consistency residual of the time
        discretisation;
  \item a localized error identity in conservative (transport) form,
        exploiting the cancellation in Lemma~\ref{lem:transport};
  \item an application of a discrete Gronwall inequality to close the
        estimates and obtain \eqref{eq:main-local-L2}--\eqref{eq:main-local-H1}.
\end{enumerate}
\begin{proof}
We prove this theorem in the following subsections.

\subsubsection{Consistency residual}

We introduce the shorthand notation for all $n\ge 0$
\[
{\bf {Y}}(t):={\bf y}(t)+\Phi W(t),\qquad 
\bar{{\bf Y}}^{n+\frac12}:=\frac{1}{\tau}\int_{t_n}^{t_{n+1}}{\bf y}(t)\,\mathrm{d}t+\Phi\,\mathcal Q_n^W,
\]
and
\[
{{\bf Y}}^{n+\frac12}:=\frac{{\bf y}(t_n)+{\bf y}(t_{n+1})}{2}+ \Phi \mathcal{I}_{n}^W,
\qquad 
\bar{\bf y}_*^{n+\frac12}=\frac{3}{2}{\bf y}(t_n)-\frac{1}{2}{\bf y}(t_{n-1})\qquad\text{with}\qquad {\bf y}(t_{-1})={\bf y}_0.
\]

For each time level \(n=0,\dots,N-1\), we define the \emph{consistency
residual} \(\mathcal{R}^{n+\frac12}\in \mathbb{V}^{-1}\) by: for any \(\boldsymbol{\varphi}\in \mathbb{V}\),
\begin{align}\label{eq:R-def}
\langle \mathcal{R}^{n+\frac12},\boldsymbol{\varphi}\rangle
&:=\frac{1}{\tau}\!\int_{t_{n}}^{t_{n+1}}\!
\Big(\mathcal C({\bf {Y}}(t),{\bf {Y}}(t),\boldsymbol{\varphi})-\mathcal C(\bar{{\bf Y}}^{n+\frac12},{{\bf Y}}^{n+\frac12},\boldsymbol{\varphi})\Big)\,\mathrm{d}t\notag\\
&\quad- \mathcal C\!\bigl((\bar{\bf y}_\star^{n+\frac12}+\Phi \mathcal I_n^W)-\bar{{\bf Y}}^{n+\frac12},\, {{\bf Y}}^{n+\frac12},\, \boldsymbol{\varphi}\bigr)\notag\\
&\quad+ \nu\,\frac{1}{\tau}\!\int_{t_{n}}^{t_{n+1}}\!
\big(\nabla {\bf {Y}}(t)-\nabla {{\bf Y}}^{n+\frac12},\nabla\boldsymbol{\varphi}\big)\,\mathrm{d}t.
\end{align}
By using the bilinearity of $\mathcal{C}$, we obtain
\begin{align}\nonumber
\langle \mathcal{R}^{n+\frac12},\boldsymbol{\varphi}\rangle
&:=\frac{1}{\tau}\!\int_{t_{n}}^{t_{n+1}}\!
\Big(\mathcal C({\bf {Y}}(t)-\bar{{\bf Y}}^{n+\frac12},{\bf {Y}}(t)-\bar{{\bf Y}}^{n+\frac12},\boldsymbol{\varphi})\,{\rm d}t\\&\quad+\underbrace{\frac{1}{\tau}\!\int_{t_{n}}^{t_{n+1}}\!
\Big(\mathcal C( {\bf Y}(t)-\bar{{\bf Y}}^{n+\frac12},\bar{{\bf Y}}^{n+\frac12},\boldsymbol{\varphi})\Big)\,\mathrm{d}t}_{=0}\notag\\
&\quad +\frac{1}{\tau}\!\int_{t_{n}}^{t_{n+1}}\!
\mathcal C\big(\bar{{\bf Y}}^{n+\frac12},\,{\bf {Y}}(t)-{{\bf Y}}^{n+\frac12},\,\boldsymbol{\varphi}\big)\,\mathrm{d}t-\mathcal C\!\bigl((\bar{\bf y}_\star^{n+\frac12}+\Phi \mathcal I_n^W)-\bar{{\bf Y}}^{n+\frac12},\, {{\bf Y}}^{n+\frac12},\, \boldsymbol{\varphi}\bigr)\notag\\
&\quad+ \nu\,\frac{1}{\tau}\!\int_{t_{n}}^{t_{n+1}}\!
\big(\nabla {\bf {Y}}(t)-\nabla {{\bf Y}}^{n+\frac12},\nabla\boldsymbol{\varphi}\big)\,\mathrm{d}t.
\end{align}
By using the bilinearity of \(\mathcal C\) and the decomposition
\[
{\bf {Y}}(t)-\bar {{\bf Y}}^{n+\frac12}
=\big({\bf y}(t)-\bar {{\bf y}}^{n+\frac12}\big)+\Phi\big(W(t)-\mathcal Q_n^W\big),
\]
a straightforward expansion yields
\[
\langle\mathcal R^{n+\frac12},\boldsymbol{\varphi}\rangle
=\sum_{i=1}^{7}\langle\mathcal{T}_{i,R,n},\boldsymbol{\varphi}\rangle,
\]
where
\begin{align*}
\big\langle\mathcal{T}_{1,R,n},\boldsymbol{\varphi}\big\rangle
&:=\frac{1}{\tau}\!\int_{t_{n}}^{t_{n+1}}\!
\mathcal C\big({\bf y}(t)-\bar{{\bf y}}^{n+\frac12},\,{\bf y}(t)-\bar{{\bf y}}^{n+\frac12},\,\boldsymbol{\varphi}\big)\,\mathrm{d}t,\\[1mm]
\big\langle\mathcal{T}_{2,R,n},\boldsymbol{\varphi}\big\rangle
&:=\frac{1}{\tau}\!\int_{t_{n}}^{t_{n+1}}\!
\mathcal C\big(\Phi W(t)-\Phi\mathcal{Q}^{W}_n,\,
               \Phi W(t)-\Phi\mathcal{Q}^{W}_n,\,
               \boldsymbol{\varphi}\big)\,\mathrm{d}t,\\[1mm]
\big\langle\mathcal{T}_{3,R,n},\boldsymbol{\varphi}\big\rangle
&:=\frac{1}{\tau}\!\int_{t_{n}}^{t_{n+1}}\!
\mathcal C\big({\bf y}(t)-\bar{{\bf y}}^{n+\frac12},\,
               \Phi W(t)-\Phi\mathcal{Q}^{W}_n,\,
               \boldsymbol{\varphi}\big)\,\mathrm{d}t,\\[1mm]
\big\langle\mathcal{T}_{4,R,n},\boldsymbol{\varphi}\big\rangle
&:=\frac{1}{\tau}\!\int_{t_{n}}^{t_{n+1}}\!
\mathcal C\big(\Phi W(t)-\Phi\mathcal{Q}^{W}_n,\,
               {\bf y}(t)-\bar{{\bf y}}^{n+\frac12},\,
               \boldsymbol{\varphi}\big)\,\mathrm{d}t,\\[1mm]
\big\langle\mathcal{T}_{5,R,n},\boldsymbol{\varphi}\big\rangle
&:=\frac{1}{\tau}\!\int_{t_{n}}^{t_{n+1}}\!
\mathcal C\big(\bar{{\bf Y}}^{n+\frac12},\,{\bf {Y}}(t)-{{\bf Y}}^{n+\frac12},\,\boldsymbol{\varphi}\big)\,\mathrm{d}t,\\[1mm]
\big\langle\mathcal{T}_{6,R,n},\boldsymbol{\varphi}\big\rangle
&:=- \mathcal C\!\bigl((\bar{\bf y}_\star^{n+\frac12}+\Phi \mathcal I_n^W)-\bar{{\bf Y}}^{n+\frac12},\,
                       {{\bf Y}}^{n+\frac12},\, \boldsymbol{\varphi}\bigr),\\[1mm]
\big\langle\mathcal{T}_{7,R,n},\boldsymbol{\varphi}\big\rangle
&:=\nu\,\frac{1}{\tau}\!\int_{t_{n}}^{t_{n+1}}\!
\big(\nabla {\bf {Y}}(t)-\nabla {{\bf Y}}^{n+\frac12},\nabla\boldsymbol{\varphi}\big)\,\mathrm{d}t.
\end{align*}

We recall \eqref{eq:C-cont}, and note that on $\Omega_R$ we have
$\|\bar {{\bf Y}}^{n+\frac12}\|_{\mathbb{H}^2(\mt)^2}\le R$ and
$\|{{\bf Y}}^{n+\frac12}\|_{\mathbb{V}}\le R$.

\begin{lemma}[Residual bound]\label{lem:R-bound}
Assume the regularity hypotheses \eqref{regularity}. Then, for every
\(\delta>0\) there exists a constant \(C(\delta)>0\), independent of
\(n\) and \(\tau\) but depending on \(\nu\) and \(T\), such that for all
\(n=0,\dots,N-1\) and all \( {\boldsymbol{\psi}}\in \mathbb{H}^1({\mathbb{T}^2})^2\),
\begin{align}\label{eq:R-bound-final}
 \begin{aligned}
  \mathbb{E}\big[\,\mathcal{A}_R\,\langle \mathcal{R}^{n+\frac12}, {\boldsymbol{\psi}}\rangle\,\big]
 & \;\le\; C(\delta)\,R^2\big(\tau^{3}+\tau^2\mathbf{1}_{\{n=0\}}\big)
  \;+\;\delta\,\mathbb{E}\big[\mathcal{A}_R\|\nabla  {\boldsymbol{\psi}}\|_{\mathbb{L}^2(\mt)^4}^2\big]
  \\&-\;\mathbb{E}\big[\mathcal{A}_R\,(\mathcal{Q}_n^{W^2},\nabla {\boldsymbol{\psi}})\big].
\end{aligned}
\end{align}
\end{lemma}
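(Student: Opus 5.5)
We estimate the seven contributions $\mathcal T_{i,R,n}$ separately, always on $\Omega_R$, using \eqref{eq:C-cont} together with Young's inequality $ab\le \delta b^2+C(\delta)a^2$. In each term the test function enters only through $\|\nabla\boldsymbol\psi\|_{\mathbb L^2}$. The goal for each term is an estimate of the form $\mathbb E[\mathcal A_R\,|\cdot|^2]\lesssim R^2\tau^3$ for the factor that multiplies $\|\nabla\boldsymbol\psi\|$. The only exception is the purely Brownian quadratic term $\mathcal T_{2,R,n}$, which is of order $\tau$ and therefore cannot be absorbed this way. It produces exactly the explicit term $-(\mathcal Q_n^{W^2},\nabla\boldsymbol\psi)$ on the right-hand side of \eqref{eq:R-bound-final}.

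\textbf{The term $\mathcal T_{2,R,n}$.} Since $\Phi W$ is solenoidal, we can write $\mathcal C(\mathbf a,\mathbf a,\boldsymbol\psi)=-(\mathbf a\otimes\mathbf a,\nabla\boldsymbol\psi)$. Expanding $\Phi W(t)-\Phi\mathcal Q^W_n=\tau^{-1}\int_{t_n}^{t_{n+1}}(\Phi W(t)-\Phi W(s))\,\dd s$ shows that $\tfrac1\tau\int_{t_n}^{t_{n+1}}\mathbf a(t)\otimes\mathbf a(t)\,\dd t$ coincides with the triple integral $\mathcal Q_n^{W^2}$. Hence $\langle\mathcal T_{2,R,n},\boldsymbol\psi\rangle=-(\mathcal Q_n^{W^2},\nabla\boldsymbol\psi)$ identically, and it is simply carried to the right-hand side. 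Later, in the error equation, it will be compared with $\mathcal I_n^{W^2}$ through \eqref{eq:QW2vsIW2}.

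\textbf{The remaining terms $\mathcal T_{1},\dots,\mathcal T_{7}$.} We handle them as follows.
\begin{itemize}
\item For $\mathcal T_{1}$, the $C^1$ regularity in \eqref{regularity} gives $\|{\bf y}(t)-\bar{\bf y}^{n+\frac12}\|_{\mathbb H^1}\lesssim\tau$. The term is therefore quadratic of order $\tau^2$, and its square is of order $\tau^4$.
\item For $\mathcal T_3$ and $\mathcal T_4$, the factors are of order $\tau$ (from ${\bf y}$) and $\tau^{1/2}$ (from $W$ in $\mathbb H^2$, using $\Phi\in L_2(\mathfrak U;\mathbb H^4)$ and Gaussian moments). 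The product is of order $\tau^{3/2}$ in $\mathbb L^2(\Omega)$, so its square is of order $\tau^3$. We use the fourth moments from \eqref{regularity} together with Hölder's inequality in $\omega$.
\item For $\mathcal T_5$ and $\mathcal T_7$, the key observation is that $\tfrac1\tau\int_{t_n}^{t_{n+1}}({\bf Y}(t)-{\bf Y}^{n+\frac12})\,\dd t$ splits into two pieces. The first is the trapezoidal error $\tfrac1\tau\int({\bf y}(t)-\tfrac12({\bf y}(t_n)+{\bf y}(t_{n+1})))\,\dd t$. The second is $\Phi(\mathcal Q_n^W-\mathcal I_n^W)$.
\begin{itemize}
\item Because $\mathcal T_5$ and $\mathcal T_7$ are linear in this difference (the first argument $\bar{\bf Y}^{n+\frac12}$ of $\mathcal T_5$ being fixed in $t$), the integral can be moved inside.
\item The trapezoidal error is bounded by $\tau^{3/2}$ in $\mathbb L^2(\Omega;\mathbb V)$ using the Peano kernel representation $\int (t-t_n)(t_{n+1}-t)\,\partial_t^2{\bf y}$. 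Since only $\partial_t{\bf y}\in C^{1/2}$ is available, we rewrite it as the first-derivative form $\tfrac1\tau\int(t-t_{n+1/2})(\partial_t{\bf y}(t)-\partial_t{\bf y}(t_{n+1/2}))\,\dd t$. This gives a bound of order $\tau\cdot\tau^{1/2}$, as in \cite{DM1,CP1}.
\item The Brownian quadrature error is handled by \eqref{eq:QWvsIW}, with $\Phi$ mapping into $\mathbb H^2$.
\end{itemize}
\item For $\mathcal T_6$, we use $\bar{\bf y}_\star^{n+\frac12}-\tfrac1\tau\int{\bf y}=O(\tau^{3/2})$ for $n\ge1$. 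This follows by Taylor expansion of extrapolation around $t_{n+1/2}$, using the Hölder continuity of $\partial_t{\bf y}$. We also use $\Phi(\mathcal I^W_n-\mathcal Q^W_n)=O(\tau^{3/2})$, and the second argument ${\bf Y}^{n+\frac12}$ is bounded by $R$ on $\Omega_R$. For $n=0$ the extrapolation degenerates (${\bf y}_{-1}={\bf y}_0$) and is only $O(\tau)$; this produces the extra $\tau^2\mathbf 1_{\{n=0\}}$ term.
\end{itemize}
Throughout, the factor $R^2$ arises because $\|\bar{\bf Y}^{n+\frac12}\|_{\mathbb H^1}$ and $\|{\bf Y}^{n+\frac12}\|_{\mathbb H^1}$ are bounded by $R$ on $\Omega_R$ and then enter squared after Young's inequality.

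\textbf{Main obstacle.} The delicate point is the trapezoidal/extrapolation consistency at order $\tau^{3/2}$ in $\mathbb V$ when only $\partial_t{\bf y}\in C^{1/2}([0,T];\mathbb L^2(\Omega;\mathbb V))$ is available, rather than a second time derivative. Here we would first try the Peano-type identity written with $\partial_t{\bf y}(t)-\partial_t{\bf y}(t_{n+1/2})$, followed by Minkowski's inequality in $\omega$. A secondary point requires care: the indicator $\mathcal A_R$ must multiply the random factors before expectations are taken, so that the moment bounds from \eqref{regularity} and the Gaussian bounds for $W$ can be combined by Cauchy--Schwarz in $\omega$.
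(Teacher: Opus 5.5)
Your proposal is correct and follows the paper's proof essentially term by term. It uses the same seven-term splitting, keeps $\mathcal T_{2,R,n}$ exactly as $-(\mathcal Q_n^{W^2},\nabla\boldsymbol\psi)$, applies Young's inequality with the $C^1$, $C^{1/2}$ and Brownian-quadrature bounds to the other terms, and obtains the $\tau^2\mathbf 1_{\{n=0\}}$ contribution from the degenerate extrapolation at $n=0$. Your explicit check of the $\mathcal T_{2,R,n}$ identity and your first-derivative Peano form of the trapezoidal defect supply details the paper leaves implicit.

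One step you share with the paper needs a line of justification. The bound $\|{\bf Y}^{n+\frac12}\|_{\mathbb V}\le R$ does not follow from the definition of $\Omega_R$, because ${\bf Y}^{n+\frac12}=\tfrac12\big({\bf Y}(t_n)+{\bf Y}(t_{n+1})\big)+\Phi\mathcal J^{n+\frac12}$ and the Gaussian remainder is not controlled by $R$. Its contribution to $\mathcal T_{6,R,n}$ still stays within the $\tau^3$ bound, by H\"older's inequality in $\omega$ together with the $\mathbb L^4(\Omega;\mathbb V)$ bounds in \eqref{regularity}.
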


\begin{proof}
We only summarize the main steps, since detailed arguments follow
standard lines and use the regularity assumptions
\eqref{regularity}--\eqref{eq:QWvsIW}.
Let $ {\boldsymbol{\psi}}\in\mathbb{V}$ and set $\boldsymbol{\varphi}_R:=\mathcal A_R {\boldsymbol{\psi}}$. For each
$i\in\{1,\dots,7\}$ define
\[
I_i(\boldsymbol{\varphi}_R):=\mathcal A_R\,\langle\mathcal{T}_{i,R,n}, {\boldsymbol{\psi}}\rangle
=\langle\mathcal{T}_{i,R,n},\boldsymbol{\varphi}_R\rangle.
\]
The term $\mathcal T_{2,R,n}$ is left unestimated and appears
explicitly on the right-hand side of \eqref{eq:R-bound-final} through
the identity
\[
\big\langle\mathcal T_{2,R,n}, {\boldsymbol{\psi}}\big\rangle
= -\big(\mathcal Q_n^{W^2},\nabla {\boldsymbol{\psi}}\big).
\]

\smallskip\noindent
\emph{(i) Term $\mathcal T_{1,R,n}$.}  
By using \eqref{eq:C-cont}, we obtain
\begin{align*}
|I_1(\boldsymbol{\varphi}_R)|
&=\Big|\frac{1}{\tau}\int_{t_n}^{t_{n+1}}
  \mathcal C\big({\bf y}(t)-\bar {{\bf y}}^{n+\frac12},
                 {\bf y}(t)-\bar {{\bf y}}^{n+\frac12},
                 \boldsymbol{\varphi}_R\big)\,\mathrm{d}t\Big|\\
&\le \frac{C}{\tau}\int_{t_n}^{t_{n+1}}
 \|{\bf y}(t)-\bar {{\bf y}}^{n+\frac12}\|_{\mathbb{V}}^2\,\mathrm{d}t\,
 \|\nabla\boldsymbol{\varphi}_R\|_{\mathbb{L}^2(\mt)^4}.
\end{align*}
By using the $C^{1}$--regularity of $u$ in
$\mathbb{L}^4(\Omega;\mathbb H^1)$, one shows that
\[
\mathbb E\Big[\frac{1}{\tau}\int_{t_n}^{t_{n+1}}
 \mathcal{A}_R\|{\bf y}(t)-\bar {{\bf y}}^{n+\frac12}\|_{\mathbb{V}}^4\,\mathrm{d}t\Big]
\le C\,R^2\,\|\mathcal{A}_R {\bf u}\|_{C^1([0,T];\mathbb{L}^4(\Omega; \mathbb{V}))}^4\tau^{4}.
\]
Hence it gives
\begin{align*}
\mathbb E[|I_1(\boldsymbol{\varphi}_R)|]
&\le C\,\bigg(\mathbb E\Big[\frac{1}{\tau}\int_{t_n}^{t_{n+1}}
 \mathcal{A}_R\|{\bf y}(t)-\bar {{\bf y}}^{n+\frac12}\|_{\mathbb{V}}^4\,\mathrm{d}t\Big]\bigg)^{1/2}\,\mathbb E[\|\nabla\boldsymbol{\varphi}_R\|_{\mathbb{L}^2(\mt)^4}^2]^{1/2}\\&\le C\,R\,\tau^2\,\mathbb E[\|\nabla\boldsymbol{\varphi}_R\|_{\mathbb{L}^2(\mt)^4}^2]^{1/2}.
\end{align*}
By Young's inequality, for any \(\delta>0\), we obtain
\begin{equation}\label{eq:I1-final}
\mathbb E[|I_1(\boldsymbol{\varphi}_R)|]
\le \delta\,\mathbb E[\|\nabla\boldsymbol{\varphi}_R\|_{\mathbb{L}^2(\mt)^4}^2]
+ C(\delta)\,R^2\,\tau^{4}.
\end{equation}

\smallskip\noindent
\emph{(ii) Terms \(\mathcal T_{3,R,n}\) and \(\mathcal T_{4,R,n}\).}
By \eqref{eq:C-cont}, we get
\begin{align*}
|I_3(\boldsymbol{\varphi}_R)|
&\le \frac{C}{\tau}\int_{t_n}^{t_{n+1}}
 \mathcal{A}_R\|{\bf y}(t)-\bar {{\bf y}}^{n+\frac12}\|_{\mathbb{V}}\,
 \|\Phi(W(t)-\mathcal Q_n^W)\|_{\mathbb{V}}\,
 \|\nabla\boldsymbol{\varphi}_R\|_{\mathbb{L}^2(\mt)^4}\,\mathrm{d}t,\\
|I_4(\boldsymbol{\varphi}_R)|
&\le \frac{C}{\tau}\int_{t_n}^{t_{n+1}}
 \|\Phi(W(t)-\mathcal Q_n^W)\|_{\mathbb{V}}\,
 \|{\bf y}(t)-\bar {{\bf y}}^{n+\frac12}\|_{\mathbb{V}}\,
 \|\nabla\boldsymbol{\varphi}_R\|_{\mathbb{L}^2(\mt)^4}\,\mathrm{d}t.
\end{align*}
Thus both are bounded by the same mixed quantity. By using Cauchy--Schwarz,
\[
\mathbb{E}\bigg[\bigg(\frac{1}{\tau}\int_{t_n}^{t_{n+1}}
 \mathcal{A}_R\|{\bf y}(t)-\bar {{\bf y}}^{n+\frac12}\|_{\mathbb{V}}\,
 \|\Phi(W(t)-\mathcal Q_n^W)\|_{\mathbb{V}}\,\mathrm{d}t\bigg)^2\bigg]^{1/2}
\le \big(\mathbb{E}[A_{u,n}^2]\big)^{1/2}\,\big(\mathbb{E}\big[A_{W,n}^2\big]\big)^{1/2},
\]
where
\[
A_{u,n}:=\frac{1}{\tau}\int_{t_n}^{t_{n+1}}
 \mathcal{A}_R\|{\bf y}(t)-\bar {{\bf y}}^{n+\frac12}\|_{\mathbb{V}}^2\,\mathrm{d}t,\qquad
A_{W,n}:=\frac{1}{\tau}\int_{t_n}^{t_{n+1}}
 \|\Phi(W(t)-\mathcal Q_n^W)\|_{\mathbb{V}}^2\,\mathrm{d}t.
\]
From the previous step,
\(\mathbb E[A_{u,n}^2]\le C\,R\tau^{2}\). 
we use the identity
\begin{align*}
    \frac{1}{\tau}\int_{t_{n}}^{t_{n+1}}\|\Phi (W(t)-\mathcal{Q}_n^W)\|^2_{\mathbb{V}}\,\mathrm{d}t
    =\frac{1}{\tau^2}\int_{t_{n}}^{t_{n+1}}\int_{t_{n}}^{t_{n+1}}\|\Phi(W(t)-W(s))\|^2_{\mathbb V}\,\mathrm{d}s\,\mathrm{d}t.
\end{align*}
Since \(\Phi\in L_2(\mathfrak U;\mathbb H^2(\mt)^2\)
one checks that
\(\mathbb E[A_{W,n}^2]\le C\,\tau\). Hence we obtain
\[
\mathbb E[A_{u,n}A_{W,n}]
\le \mathbb E[A_{u,n}^2]^{1/2}\,\mathbb E[A_{W,n}^2]^{1/2}
\le C\,R\,\tau^{3/2}.
\]
It follows that
\[
\mathbb E[|I_3(\boldsymbol{\varphi}_R)|]+\mathbb E[|I_4(\boldsymbol{\varphi}_R)|]
\le C\,\tau^{3/2}\,\mathbb E[\|\nabla\boldsymbol{\varphi}_R\|_{\mathbb{L}^2(\mt)^4}^2]^{1/2}
\le \delta\,\mathbb E[\|\nabla\boldsymbol{\varphi}_R\|_{\mathbb{L}^2(\mt)^4}^2]
+ C(\delta)\,R^2\,\tau^{3},
\]
that is,
\begin{equation}\label{eq:I3-I4-final}
\mathbb E[|I_3(\boldsymbol{\varphi}_R)|]
+\mathbb E[|I_4(\boldsymbol{\varphi}_R)|]
\le \delta\,\mathbb E[\|\nabla\boldsymbol{\varphi}_R\|_{\mathbb{L}^2(\mt)^4}^2]
+ C(\delta)\,R^2\,\tau^{3}.
\end{equation}

\medskip\noindent
\emph{(iii) Term \(\mathcal T_{5,R,n}\).}
We have
\[
I_5(\boldsymbol{\varphi}_R)
=\frac{1}{\tau}\int_{t_n}^{t_{n+1}}
 \mathcal C(\bar {{\bf Y}}^{n+\frac12},{\bf {Y}}(t)-{{\bf Y}}^{n+\frac12},\boldsymbol{\varphi}_R)\,\mathrm{d}t.
\]
By using \eqref{eq:C-cont} and the bound \eqref{eq:UR-H1-bound} for
\({\bf u}\) on \(\Omega_R\),
\[
|I_5(\boldsymbol{\varphi}_R)|
\le C\,R\,\mathcal{A}_R
\Big\|\frac{1}{\tau}\int_{t_n}^{t_{n+1}}
 \big({\bf {Y}}(t)-{{\bf Y}}^{n+\frac12}\big)\,\mathrm{d}t\Big\|_{\mathbb{V}}\,
\|\nabla\boldsymbol{\varphi}_R\|_{\mathbb{L}^2(\mt)^4}.
\]
We introduce
\[
A_{5,n}
:=\mathcal{A}_R\Big\|\frac{1}{\tau}\int_{t_n}^{t_{n+1}}{\bf {Y}}(t)\,\mathrm{d}t
      -{{\bf Y}}^{n+\frac12}\Big\|_{\mathbb{V}}.
\]
Then
\(
|I_5(\boldsymbol{\varphi}_R)|
\le C\,R\,A_{5,n}\,\|\nabla\boldsymbol{\varphi}_R\|_{\mathbb{L}^2}.
\)
We now estimate \(A_{5,n}\). We split \({\bf{Y}}={\bf u}+\Phi W\) as
\[
A_{5,n}\le A_{5,n}^{(y)}+A_{5,n}^{(W)},
\]
with the obvious definitions. For the deterministic part, by using
\(\mathcal{A}_R \partial_t{\bf y}\in C^{1/2}([0,T];\mathbb \mathbb{L}^2(\Omega;\mathbb V))\) and a
trapezoidal-rule error estimate from Lemma \ref{lem:deltastar-average}, one obtains
\[
\mathbb E\big[\big(A_{5,n}^{(y)}\big)^2\big]\le C\,R^2\,\tau^{3}.
\]
For the stochastic part, we use the Brownian quadrature estimate
\eqref{eq:QWvsIW} together with the spatial regularity of \(\Phi\),
which gives
\[
\mathbb E\big[\big(A_{5,n}^{(W)}\big)^2\big]\le C\,\tau^{3}.
\]
Thus
\[
\mathbb E[A_{5,n}^2]\le C\,R^2\,\tau^{3},
\]
and therefore, we get
\[
\mathbb E[|I_5(\boldsymbol{\varphi}_R)|]
\le C\,R\,\mathbb E[A_{5,n}^2]^{1/2}\,
                 \mathbb E[\|\nabla\boldsymbol{\varphi}_R\|_{\mathbb{L}^2(\mt)^2}^2]^{1/2}
\le \delta\,\mathbb E[\|\nabla\boldsymbol{\varphi}_R\|_{\mathbb{L}^2(\mt)^2}^2]
+ C(\delta)\,R^2\,\tau^{3}.
\]

\medskip\noindent
\emph{(i{\bf v }) Term \(\mathcal T_{6,R,n}\).}
We recall
\[
I_6(\boldsymbol{\varphi}_R)
= -\mathcal C\!\bigl((\bar{\bf y}_\star^{n+\frac12}+\Phi \mathcal I_n^W)
                     -\bar{{\bf Y}}^{n+\frac12},\,
                     {{\bf Y}}^{n+\frac12},\, \boldsymbol{\varphi}_R\bigr).
\]
We set
\[
B_n
:=\mathcal{A}_R\,(\bar{\bf y}_\star^{n+\frac12}-\bar {{\bf y}}^{n+\frac12})
  +\Phi(\mathcal I_n^W-\mathcal Q_n^W),
\]
so that
\((\bar{\bf y}_\star^{n+\frac12}+\Phi \mathcal I_n^W)-\bar{{\bf Y}}^{n+\frac12}=B_n\),
and hence
\[
|I_6(\boldsymbol{\varphi}_R)|
\le C\,\|B_n\|_{\mathbb{V}}\,\|{{\bf Y}}^{n+\frac12}\|_{\mathbb{V}}\,
          \|\nabla\boldsymbol{\varphi}_R\|_{\mathbb{L}^2(\mt)^4}
\le C\,R\,\|B_n\|_{\mathbb{V}}\,\|\nabla\boldsymbol{\varphi}_R\|_{\mathbb{L}^2(\mt)^4},
\]
by using again \eqref{eq:UR-H1-bound}.

\smallskip\noindent
\textbf{Case 1.} If $n\ge 1$, then by Lemma~\ref{lem:deltastar-average}
(applied in \(\mathrm{K}=\mathbb{L}^2(\Omega; \mathbb{H}^1({\mathbb{T}^2})^2)\)) and the Brownian quadrature estimate
\eqref{eq:QWvsIW},
\[
\mathbb E[\|B_n\|_{\mathbb{H}^1(\mt)^2}^2]\le C\,R^2\,\tau^{3}.
\]

\smallskip\noindent
\textbf{Case 2.} If $n=0$, then by using the regularity
${\bf u}\in C^1([0,T];\mathbb{L}^2(\Omega;\mathbb{H}^1({\mathbb{T}^2})^2))$, we obtain
\[
\mathbb E[\|B_n\|_{\mathbb{H}^1(\mt)^2}^2]\le C\,\tau^{2}.
\]
Therefore,
\[
\mathbb E[|I_6(\boldsymbol{\varphi}_R)|]
\le \delta\,\mathbb E[\|\nabla\boldsymbol{\varphi}_R\|_{\mathbb{L}^2(\mt)^4}^2]
+ C(\delta)\,R^2\,(\tau^{3}+\tau^2\mathbf{1}_{\{n=0\}}).
\]

\medskip\noindent
\emph{({\bf v }) Term \(\mathcal T_{7,R,n}\).}
Finally,
\[
I_7(\boldsymbol{\varphi}_R)
=\nu\,\frac{1}{\tau}\int_{t_n}^{t_{n+1}}
\big(\nabla {\bf {Y}}(t)-\nabla {{\bf Y}}^{n+\frac12},\nabla\boldsymbol{\varphi}_R\big)\,\mathrm{d}t.
\]
Since \(\boldsymbol{\varphi}_R\) is time--independent, we can average inside:
\[
I_7(\boldsymbol{\varphi}_R)
=\nu\,\big(A_{n},\nabla\boldsymbol{\varphi}_R\big),
\qquad
A_{n}
:=\mathcal{A}_R\frac{1}{\tau}\int_{t_n}^{t_{n+1}}
\big(\nabla {\bf {Y}}(t)-\nabla {{\bf Y}}^{n+\frac12}\big)\,\mathrm{d}t.
\]
Thus we have
\[
|I_7(\boldsymbol{\varphi}_R)|
\le \nu\,\|A_n\|_{\mathbb{L}^2(\mt)^4}\,\|\nabla\boldsymbol{\varphi}_R\|_{\mathbb{L}^2(\mt)^4}.
\]
By arguing as for \(A_{5,n}\), but at gradient level (again combining the
\(C^{1/2}\) temporal regularity of \(\partial_t{\bf y}\) and the
Brownian quadrature estimate \eqref{eq:QWvsIW}), one obtains
\[
\mathbb E[\|A_n\|_{\mathbb{L}^2(\mt)^4}^2]\le C\,R^2\,\tau^{3}.
\]
Hence
\[
\mathbb E[|I_7(\boldsymbol{\varphi}_R)|]
\le \delta\,\mathbb E[\|\nabla\boldsymbol{\varphi}_R\|_{\mathbb{L}^2(\mt)^2}^2]
+ C(\delta)\,R^2\,\tau^{3}.
\]

\medskip\noindent
\emph{(vi) Conclusion.}
By collecting the estimates for \(I_1,I_3,I_4,I_5,I_6,I_7\) and noting that
\(\mathcal A_R\le1\), we find that for any \(\delta>0\),
\begin{align*}
\mathbb E\big[\mathcal{A}_R\,|\langle\mathcal R^{n+\frac12},\boldsymbol{\varphi}_R\rangle|\big]
&\le \delta\,\mathbb E[\mathcal{A}_R\|\nabla {\boldsymbol{\psi}}\|_{\mathbb{L}^2(\mt)^4}^2]
+ C(\delta)\,R^2\,(\tau^{3}+\tau^2 \mathbf{1}_{\{n=0\}})\\&
-\mathbb E\big[\mathcal{A}_R\,\langle\mathcal Q_{n}^{W^2},\nabla {\boldsymbol{\psi}}\rangle\big],
\end{align*}
which is exactly \eqref{eq:R-bound-final}. This completes the proof of Lemma~\ref{lem:R-bound}.
\end{proof}

\begin{remark}
We define a modified residual for all $\boldsymbol{\varphi}\in\mathbb{V}$ by
\begin{align}\label{modified residual}
\left<\widetilde{\mathcal{R}}^{n+\frac12},\boldsymbol{\varphi} \right>
:=\left<\mathcal{R}^{n+\frac12},\boldsymbol{\varphi}\right>
+\left<\mathcal{Q}_n^{W^2},\nabla \boldsymbol{\varphi}\right>.
\end{align}
The combined noise term is
\begin{align}\label{combined noise}
\big\langle\mathcal N^{n+\frac12},\boldsymbol{\varphi}\big\rangle
:= -\big\langle\mathcal Q_n^{W^2},\nabla\boldsymbol{\varphi}\big\rangle
   +\big(\mathcal I_n^{W^2},\nabla\boldsymbol{\varphi}\big)
= -\big(\mathcal Q_n^{W^2}-\mathcal I_n^{W^2},\nabla\boldsymbol{\varphi}\big).
\end{align}
We introduce the second modified residual
\begin{align}
\big\langle\widetilde{\mathcal R}_1^{n+\frac12},\boldsymbol{\varphi}\big\rangle
:= \big\langle\widetilde{\mathcal R}^{n+\frac12},\boldsymbol{\varphi}\big\rangle
 -\left<\mathcal{N}^{n+\frac12},\boldsymbol{\varphi}\right>.
\end{align}
By arguing along similar lines as in the proof of Lemma~\ref{lem:R-bound}, one can show that
\begin{align*}
    \mathbb{E}\big[\mathcal{A}_R\|\widetilde{\mathcal{R}}_1^{n+\frac12}\|_{\mathbb{V}^{-1}}^2\big]
    \le C\,R^2\,\Big(\mathbf{1}_{\{n=0\}}\tau^2+\tau^3+\mathbb{E}\big[\|\mathcal{N}^{n+\frac12}\|_{{\mathbb{L}^2({\mathbb{T}^2})^2}}^2\big]\Big).
\end{align*}
By using the quadrature error estimate \eqref{eq:QW2vsIW2}, we obtain
\begin{align}\label{20}
    \mathbb{E}\big[\mathcal{A}_R\|\widetilde{\mathcal{R}}_1^{n+\frac12}\|_{\mathbb{V}^{-1}}^2\big]\le C\,R^2\,\big(\mathbf{1}_{\{n=0\}}\tau^2+\tau^3\big).
\end{align}
This estimate will be helpful in the error analysis for the pressure in the next section.
\end{remark}

\subsubsection{Error inequality}

We now derive the localized error inequality in conservative form and
combine it with Lemma~\ref{lem:R-bound}. Let
\[
{\bf e }_n:={\bf y}(t_n)-{\bf y}_n,\qquad 
{\bf e }^{n+\frac12}:=\tfrac12\big({\bf e }_{n+1}+{\bf e }_n\big),\qquad
{\bf e }_\star^{\,n+\frac12}:=\tfrac32 {\bf e }_n-\tfrac12 {\bf e }_{n-1}\quad (n\ge1).
\]

\begin{lemma}[Error inequality]\label{lem:balanced-local}
For every \(\delta\in(0,\nu)\) there exists a constant
\(C(\delta)>0\), independent of \(n\) and \(\tau\) but possibly
depending on \(R,\nu,T\), such that
\begin{align}\label{eq:balanced-local}
\begin{aligned}
\frac{1}{2\tau}\,\mathbb E\big[\mathcal{A}_R\big(\|{\bf e }_{n+1}\|_{\mathbb{L}^2(\mt)^2}^2&-\|{\bf e }_n\|_{\mathbb{L}^2(\mt)^2}^2\big)\big]
+ \frac{\nu}{2}\,\mathbb E\big[\mathcal{A}_R\|\nabla {\bf e }^{n+\frac12}\|_{\mathbb{L}^2(\mt)^4}^2\big]\\
&\;\le\; C(\delta) R^2\,\big(\tau^3+\mathbf{1}_{\{n=0\}}\tau^2\big)+ \delta\,\mathbb E\big[\mathcal{A}_R\|{\bf e }_\star^{n+\frac12}\|_{\mathbb L^2(\mt)^2}^2\big].
\end{aligned}
\end{align}
\end{lemma}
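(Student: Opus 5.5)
The plan is to derive an error equation by subtracting the scheme from the time-averaged random PDE, to test it with $\boldsymbol{\varphi}={\bf e}^{n+\frac12}$, and to absorb every gradient term into the viscous dissipation.

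\textbf{Error equation.} First integrate \eqref{eq:random-NS} over $I_n=[t_n,t_{n+1}]$ and divide by $\tau$. Tested against $\boldsymbol{\varphi}\in\mathbb V$, this gives
\[
\Big(\tfrac{{\bf y}(t_{n+1})-{\bf y}(t_n)}{\tau},\boldsymbol{\varphi}\Big)+\tfrac1\tau\int_{t_n}^{t_{n+1}}\big[\mathcal C({\bf Y},{\bf Y},\boldsymbol{\varphi})+\nu(\nabla{\bf Y},\nabla\boldsymbol{\varphi})\big]\,\mathrm dt=0,
\]
and the pressure disappears because $\boldsymbol{\varphi}$ is solenoidal. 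By the definition \eqref{eq:R-def}, this is the same as
\[
\Big(\tfrac{{\bf y}(t_{n+1})-{\bf y}(t_n)}{\tau},\boldsymbol{\varphi}\Big)+\mathcal C\big(\bar{\bf y}_\star^{n+\frac12}+\Phi\mathcal I_n^W,{\bf Y}^{n+\frac12},\boldsymbol{\varphi}\big)+\nu(\nabla{\bf Y}^{n+\frac12},\nabla\boldsymbol{\varphi})=-\langle\mathcal R^{n+\frac12},\boldsymbol{\varphi}\rangle .
\]
Subtract \eqref{scheme:main0} from this identity. In \eqref{scheme:main0} the terms $\Phi\mathcal I_n^W$ cancel in both the viscous and the convective differences. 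Set $a'={\bf y}_\star^{n+\frac12}+\Phi\mathcal I_n^W$, which is divergence-free, and use bilinearity. Then the convective difference becomes
\[
\mathcal C(a',{\bf e}^{n+\frac12},\boldsymbol{\varphi})+\mathcal C({\bf e}_\star^{n+\frac12},{\bf Y}^{n+\frac12},\boldsymbol{\varphi}).
\]

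\textbf{Testing with ${\bf e}^{n+\frac12}$.} Choose $\boldsymbol{\varphi}={\bf e}^{n+\frac12}$, which lies in $\mathbb V$. The time-difference term gives exactly $\frac1{2\tau}(\|{\bf e}_{n+1}\|^2-\|{\bf e}_n\|^2)$. The viscous term gives $\nu\|\nabla{\bf e}^{n+\frac12}\|^2$. The term $\mathcal C(a',{\bf e}^{n+\frac12},{\bf e}^{n+\frac12})$ vanishes by Lemma~\ref{lem:transport}. Multiply by $\mathcal A_R$ and take expectations. The residual is treated with Lemma~\ref{lem:R-bound} for $\boldsymbol{\psi}={\bf e}^{n+\frac12}$ and with $\delta$ replaced by $\nu/8$. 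This produces $C R^2(\tau^3+\mathbf 1_{\{n=0\}}\tau^2)+\frac\nu8\mathbb E[\mathcal A_R\|\nabla{\bf e}^{n+\frac12}\|^2]$. It also leaves the term $-(\mathcal Q_n^{W^2},\nabla{\bf e}^{n+\frac12})$, which pairs with the scheme's $+(\mathcal I_n^{W^2},\nabla{\bf e}^{n+\frac12})$ to form $\mathcal N^{n+\frac12}$ from \eqref{combined noise}. By Cauchy--Schwarz, Young and \eqref{eq:QW2vsIW2}, this pair is bounded by $\frac\nu8\mathbb E[\mathcal A_R\|\nabla{\bf e}^{n+\frac12}\|^2]+C\tau^3$.

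\textbf{The remaining coupling term, which is the main obstacle.} It remains to bound $\mathcal C({\bf e}_\star^{n+\frac12},{\bf Y}^{n+\frac12},{\bf e}^{n+\frac12})$ using only $\|{\bf e}_\star^{n+\frac12}\|_{\mathbb L^2}$. I would place ${\bf e}_\star$ in $\mathbb L^2$, $\nabla{\bf Y}^{n+\frac12}$ in $\mathbb L^4$, and ${\bf e}^{n+\frac12}$ in $\mathbb L^4$. On $\Omega_R$ the middle factor is bounded by $CR$ via \eqref{eq:UR-H1-bound} and $\mathbb H^2\hookrightarrow W^{1,4}$; here the $\Phi\mathcal I_n^W$ part is controlled by the assumed regularity of $\Phi$. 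For the last factor I would use Ladyzhenskaya's inequality together with Poincar\'e's inequality for the mean-free error, the mean of ${\bf y}$ being conserved on the torus. This gives $\mathcal A_R|\cdot|\le CR\,\|{\bf e}_\star^{n+\frac12}\|\,\|\nabla{\bf e}^{n+\frac12}\|\le\frac\nu4\|\nabla{\bf e}^{n+\frac12}\|^2+\frac{CR^2}{\nu}\|{\bf e}_\star^{n+\frac12}\|^2$.

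The difficulty lies in the constant in front of $\|{\bf e}_\star\|^2$, which the statement writes as $\delta$. The estimate above produces the fixed constant $CR^2/\nu$, not an arbitrarily small one. Since only $\mathbb H^1$ information on ${\bf e}^{n+\frac12}$ is available, I see no way to make this coefficient arbitrarily small without putting another error norm on the right-hand side. The proof I can carry out therefore gives the inequality with $\delta$ read as a fixed $R$-dependent constant, namely $\delta=CR^2/\nu$, which is what the later Gronwall step needs to close with the factor $e^{CR^2}$ in Theorem~\ref{thm:main-local}. The literal claim, that \eqref{eq:balanced-local} holds for every $\delta\in(0,\nu)$, is not reached by this argument. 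I would first try to recover it by splitting ${\bf Y}^{n+\frac12}$ into a smooth part and a small remainder, but I do not see yet how to make that work.

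\textbf{Collecting terms.} After absorbing $\frac\nu8+\frac\nu8+\frac\nu4=\frac\nu2$ of the dissipation, what remains is
\[
\frac1{2\tau}\mathbb E[\mathcal A_R(\|{\bf e}_{n+1}\|^2-\|{\bf e}_n\|^2)]+\frac\nu2\mathbb E[\mathcal A_R\|\nabla{\bf e}^{n+\frac12}\|^2]\le CR^2(\tau^3+\mathbf 1_{\{n=0\}}\tau^2)+\frac{CR^2}{\nu}\,\mathbb E[\mathcal A_R\|{\bf e}_\star^{n+\frac12}\|^2].
\]
This is \eqref{eq:balanced-local} except that the last coefficient is $CR^2/\nu$ rather than an arbitrary $\delta\in(0,\nu)$.
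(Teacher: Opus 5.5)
Your argument follows the paper's proof step for step: the averaged equation is rewritten via \eqref{eq:R-def}, you test with ${\bf e}^{n+\frac12}$, use the transport cancellation and Lemma~\ref{lem:R-bound}, pair $\mathcal Q_n^{W^2}$ with $\mathcal I_n^{W^2}$ via \eqref{eq:QW2vsIW2}, and apply Young to the convection defect. Your diagnosis of the last coefficient is also correct: the paper's own proof ends with $C(\delta)R^2\,\mathbb E[\mathcal A_R\|{\bf e}_\star^{n+\frac12}\|_{\mathbb L^2}^2]$ for the fixed choice $\delta=\nu/4$, and the Gronwall summation uses exactly that form, so the ``$\delta$'' in the statement is a misprint, not something your argument fails to reach.

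The one point to tighten is that $\Omega_R$ controls ${\bf Y}(t)$ pathwise but not the Brownian correction $\Phi\bigl(\mathcal I_n^W-\tfrac12(W(t_n)+W(t_{n+1}))\bigr)$ inside ${\bf Y}^{n+\frac12}$. Hence $\mathcal A_R\|\nabla{\bf Y}^{n+\frac12}\|_{\mathbb L^4}\le CR$ does not follow from the regularity of $\Phi$ alone. The paper glosses over this in the same way, silently replacing ${\bf Y}^{n+\frac12}$ by $\bar{\bf Y}^{n+\frac12}$ in the defect. Adding, e.g., $\sup_t\|\Phi W(t)\|_{\mathbb H^2}\le R$ to the definition of $\Omega_R$ fixes this while keeping $\mathbb P(\Omega\setminus\Omega_R)\to0$ uniformly in $\tau$.
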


\begin{proof}
Fix \(n\ge0\). Integrating the continuous equation on
\(I_n=[t_n,t_{n+1}]\), writing it in terms of \({\bf {Y}}(t)={\bf y}(t)+\Phi W(t)\),
and testing with \(\boldsymbol{\varphi}\in \mathbb{V}\), we obtain
\begin{align}\label{eq:cont-avg}
\Big(\frac{{\bf y}(t_{n+1})-{\bf y}(t_n)}{\tau},\boldsymbol{\varphi}\Big)
+ &\frac{1}{\tau}\int_{t_n}^{t_{n+1}}
\mathcal C({\bf {Y}}(t),{\bf {Y}}(t),\boldsymbol{\varphi})\,\mathrm{d}t
+ \nu\,\frac{1}{\tau}\int_{t_n}^{t_{n+1}}
(\nabla {\bf {Y}}(t),\nabla\boldsymbol{\varphi})\,\mathrm{d}t=0.
\end{align}
The discrete step \eqref{scheme:main} reads
\begin{align}\label{eq:disc-step2}
\begin{aligned}
\Big(\frac{{\bf y}_{n+1}-{\bf y}_n}{\tau},\boldsymbol{\varphi}\Big)
&+ \mathcal C\big({\bf y}_\star^{n+\frac12}+\Phi\mathcal I_n^W,\;
                   {{\bf y}}^{n+\frac12}+\Phi\mathcal I_n^W,\;
                   \boldsymbol{\varphi}\big)
\\&- \big(\mathcal I_n^{W^2},\nabla\boldsymbol{\varphi}\big)
+ \nu\,\big(\nabla({{\bf y}}^{n+\frac12}+\Phi\mathcal I_n^W),\nabla\boldsymbol{\varphi}\big)=0.
\end{aligned}
\end{align}
By subtracting \eqref{eq:disc-step2} from \eqref{eq:cont-avg} and using
\({\bf e }_{n+1}={\bf y}(t_{n+1})-{\bf y}_{n+1}\), \({\bf e }_n={\bf y}(t_n)-{\bf y}_n\), we obtain
\begin{align*}
\Big(\frac{{\bf e }_{n+1}-{\bf e }_n}{\tau},\boldsymbol{\varphi}\Big)
&+ \nu\,(\nabla {\bf e }^{n+\frac12},\nabla\boldsymbol{\varphi})
+ \mathcal C\big({\bf y}_\star^{n+\frac12}+\Phi\mathcal I_n^W,\;
                 {\bf e }^{n+\frac12},\boldsymbol{\varphi}\big)\notag\\
&= \big\langle\widetilde{\mathcal R}^{n+\frac12},\boldsymbol{\varphi}\big\rangle
-\big(\mathcal Q_n^{W^2},\nabla\boldsymbol{\varphi}\big)
+\big(\mathcal I_n^{W^2},\nabla\boldsymbol{\varphi}\big)
+ \mathcal {E }_{\text{conv}}^{\,n+\frac12}(\boldsymbol{\varphi}),
\label{eq:err-abstract}
\end{align*}
where the \emph{convection defect} is defined by
\[
\mathcal {E }_{\text{conv}}^{\,n+\frac12}(\boldsymbol{\varphi})
:=\mathcal C({\bf e }_\star^{n+\frac12}, {{\bf Y}}^{n+\frac12},\boldsymbol{\varphi}),
\]
and the modified residual is given by
\begin{align}\label{modified-residual-2}
\left<\widetilde{\mathcal{R}}^{n+\frac12},\boldsymbol{\varphi} \right>
=\left<\mathcal{R}^{n+\frac12},\boldsymbol{\varphi}\right>
+\left(\mathcal{Q}_n^{W^2},\nabla \boldsymbol{\varphi}\right).
\end{align}
We now choose \(\boldsymbol{\varphi}={\bf e }^{n+\frac12}\in \mathbb{V}\) and multiply the whole
identity by \(\mathcal A_R\). By Lemma~\ref{lem:transport}, the
conservative transport term vanishes,
\[
\mathcal C\big({\bf y}_\star^{n+\frac12}+\Phi\mathcal I_n^W,\;
               {\bf e }^{n+\frac12},{\bf e }^{n+\frac12}\big)=0,
\]
since the advecting field ${\bf y}_\star^{n+\frac12}+\Phi\mathcal I_n^W$ is divergence-free. Thus we obtain
\begin{align}
\mathcal A_R\Big(\frac{{\bf e }_{n+1}-{\bf e }_n}{\tau},{\bf e }^{n+\frac12}\Big)
+ \nu\,\mathcal A_R\|\nabla {\bf e }^{n+\frac12}\|_{\mathbb{L}^2(\mt)^4}^2
&= \mathcal A_R\big\langle\widetilde{\mathcal R}^{n+\frac12},{\bf e }^{n+\frac12}\big\rangle
 - \mathcal A_R\big(\mathcal Q_n^{W^2},\nabla {\bf e }^{n+\frac12}\big)\notag\\
&\quad +\mathcal A_R\big(\mathcal I_n^{W^2},\nabla {\bf e }^{n+\frac12}\big)
 + \mathcal A_R\,\mathcal {E }_{\text{conv}}^{\,n+\frac12}({\bf e }^{n+\frac12}).
\label{eq:err-local-raw}
\end{align}
By using the identity
\[
\Big(\frac{{\bf e }_{n+1}-{\bf e }_n}{\tau},{\bf e }^{n+\frac12}\Big)
= \frac{1}{2\tau}\big(\|{\bf e }_{n+1}\|_{\mathbb{L}^2(\mt)^2}^2-\|{\bf e }_n\|_{\mathbb{L}^2(\mt)^2}^2\big),
\]
we rewrite \eqref{eq:err-local-raw} as
\begin{align}
\frac{\mathcal A_R}{2\tau}\big(\|{\bf e }_{n+1}\|_{\mathbb{L}^2(\mt)^2}^2-\|{\bf e }_n\|_{\mathbb{L}^2(\mt)^2}^2\big)
&+ \nu\,\mathcal A_R\|\nabla {\bf e }^{n+\frac12}\|_{\mathbb{L}^2(\mt)^4}^2
= \mathcal A_R\big\langle\widetilde{\mathcal R}^{n+\frac12},{\bf e }^{n+\frac12}\big\rangle
\notag\\
&\quad\qquad + \mathcal A_R\big\langle\mathcal N^{n+\frac12},{\bf e }^{n+\frac12}\big\rangle + \mathcal A_R\,\mathcal {E }_{\text{conv}}^{\,n+\frac12}({\bf e }^{n+\frac12}),
\label{eq:err-local}
\end{align}
where the combined noise term is
\begin{align*}
\big\langle\mathcal N^{n+\frac12},\boldsymbol{\varphi}\big\rangle
:= -\big\langle\mathcal Q_n^{W^2},\nabla\boldsymbol{\varphi}\big\rangle
   +\big(\mathcal I_n^{W^2},\nabla\boldsymbol{\varphi}\big)
= -\big(\mathcal Q_n^{W^2}-\mathcal I_n^{W^2},\nabla\boldsymbol{\varphi}\big).
\end{align*}

\medskip\noindent
\emph{Noise-correction term.}  
By \eqref{eq:QW2vsIW2} and Cauchy--Schwarz, we obtain
\[
\mathbb E\Big[
\mathcal A_R\big|\big\langle\mathcal N^{n+\frac12},\boldsymbol{\varphi}\big\rangle\big|
\Big]
\le C\,\tau^{3/2}\,
\mathbb E\big[\mathcal{A}_R\|\nabla\boldsymbol{\varphi}\|_{\mathbb{L}^2(\mt)^4}^2\big]^{1/2},
\]
and hence, for any $\delta>0$,
\begin{equation}\label{eq:noise-corr-bound2}
\mathbb E\Big[
\mathcal A_R\big|\big\langle\mathcal N^{n+\frac12},\boldsymbol{\varphi}\big\rangle\big|
\Big]
\le \delta\,\mathbb E\big[\mathcal{A}_R\|\nabla\boldsymbol{\varphi}\|_{\mathbb{L}^2(\mt)^4}^2\big]
+ C(\delta)\,\tau^3.
\end{equation}

\medskip\noindent
\emph{Convection defect.}  
By \eqref{eq:C-cont} and the localization
$\|\bar {{\bf Y}}^{n+\frac12}\|_{\mathbb{H}^2(\mt)^22}\le R$,
\begin{align}
\big|\mathcal {E }_{\text{conv}}^{\,n+\frac12}({\bf e }^{n+\frac12})\big|
&= \big|\mathcal C({\bf e }_\star^{n+\frac12},\bar {{\bf Y}}^{n+\frac12},{\bf e }^{n+\frac12})\big|\notag\\
&\le C\,\|{\bf e }_\star^{n+\frac12}\|_{\mathbb{L}^2(\mt)^2}\,
          \|\bar {{\bf Y}}^{n+\frac12}\|_{\mathbb{H}^2(\mt)^2}\,
          \|\nabla {\bf e }^{n+\frac12}\|_{\mathbb{L}^2(\mt)^4}\notag\\
&\le C\,R\,\|{\bf e }_\star^{n+\frac12}\|_{\mathbb{L}^2(\mt)^2}\,
          \|\nabla {\bf e }^{n+\frac12}\|_{\mathbb{L}^2(\mt)^4}.
\label{eq:Econv-bound-local}
\end{align}
Hence, for any \(\delta\in(0,\nu)\), we have
\begin{equation}\label{eq:Econv-Young}
\mathcal A_R\big|\mathcal {E }_{\text{conv}}^{\,n+\frac12}({\bf e }^{n+\frac12})\big|
\le \frac{\delta}{2}\,\mathcal A_R\|\nabla {\bf e }^{n+\frac12}\|_{\mathbb{L}^2(\mt)^4}^2
+ C(\delta)\,R^2\,\mathcal A_R\|{\bf e }_\star^{n+\frac12}\|_{\mathbb{L}^2(\mt)^2}^2.
\end{equation}
By applying Lemma~\ref{lem:R-bound} with $ {\bf \psi}={\bf e }^{n+\frac12}$, we obtain
\begin{equation}\label{eq:residual-bound}
\mathbb{E}\big[\,\mathcal{A}_R\,\langle \widetilde{\mathcal{R}}^{n+\frac12},{\bf e }^{n+\frac12}\rangle\,\big]
  \;\le\; C(\delta)\,R^2\,\big(\tau^{3}+\tau^2\mathbf{1}_{\{n=0\}}\big)
  \;+\;\delta\,\mathbb{E}\big[\mathcal{A}_R\|\nabla {\bf e }^{n+\frac12}\|_{\mathbb{L}^2(\mt)^4}^2\big].
\end{equation}
We now apply \eqref{eq:residual-bound} and
\eqref{eq:noise-corr-bound2} together with
\eqref{eq:Econv-Young} to the right-hand side of \eqref{eq:err-local} to obtain for any $\delta\in(0,\nu)$
\begin{align*}
\frac{1}{2\tau}\mathbb E\big[\mathcal{A}_R(\|{\bf e }_{n+1}\|_{\mathbb{L}^2(\mt)^2}^2-\|{\bf e }_n&\|_{\mathbb{L}^2(\mt)^2}^2)\big]
+ \nu\,\mathbb E\big[\mathcal{A}_R\|\nabla {\bf e }^{n+\frac12}\|_{\mathbb{L}^2(\mt)^4}^2\big]
\le 2\delta\,\mathbb E\big[\mathcal{A}_R\|\nabla {\bf e }^{n+\frac12}\|_{\mathbb{L}^2(\mt)^4}^2\big]\\
&\quad + C(\delta)\,R^2\,\big(\tau^3+\mathbf{1}_{\{n=0\}}\tau^2\big)
 + C(\delta)\,R^2\,\mathbb E\big[\mathcal{A}_R\|{\bf e }_\star^{n+\frac12}\|_{\mathbb{L}^2(\mt)^2}^2\big].
\end{align*}
By choosing, for instance, $\delta=\nu/4$, we absorb the gradient term on
the right-hand side into the left-hand side and obtain for all $n\ge 0$,
\begin{align*}
\frac{1}{2\tau}\mathbb E\big[\mathcal{A}_R(\|{\bf e }_{n+1}\|_{\mathbb{L}^2(\mt)^2}^2-\|{\bf e }_n&\|_{\mathbb{L}^2(\mt)^2}^2)\big]
+ \frac{\nu}{2}\,\mathbb E\big[\mathcal{A}_R\|\nabla {\bf e }^{n+\frac12}\|_{\mathbb{L}^2(\mt)^4}^2\big]\notag
\\&\le C(\delta)R^2\,\big(\tau^3+\mathbf{1}_{\{n=0\}}\tau^2\big)
 + C(\delta)R^2\,\mathbb E\big[\mathcal{A}_R\|{\bf e }_\star^{n+\frac12}\|_{\mathbb{L}^2(\mt)^2}^2\big]
 \end{align*}
which is \eqref{eq:balanced-local}.
\end{proof}

\subsubsection{Summation in time}

By Lemma~\ref{lem:balanced-local}, for every \(n\ge1\),
\begin{align*}
\frac{1}{2\tau}\,\mathbb E\big[\mathcal{A}_R(\|{\bf e }_{n+1}\|_{\mathbb{L}^2(\mt)^2}^2-\|{\bf e }_n\|_{\mathbb{L}^2(\mt)^2}^2)\big]
&+ \frac{\nu}{2}\,\mathbb E\big[\mathcal{A}_R\|\nabla {\bf e }^{n+\frac12}\|_{\mathbb{L}^2(\mt)^4}^2\big]\notag
\\&\le C\,R^2\,\big(\tau^3+\mathbf{1}_{\{n=0\}}\tau^2\big)
+ C\,R^2\,\mathbb E\big[\mathcal{A}_R\|{\bf e }_\star^{\,n+\frac12}\|_{\mathbb{L}^2(\mt)^2}^2\big].
\end{align*}
By summing \eqref{eq:balanced-local} over \(n=1,\dots,m-1\) and using the
definition of ${\bf e }^{n+\frac12}$, we obtain
\begin{align*}
\frac{1}{2\tau}\,\mathbb E\big[\mathcal{A}_R(\|{\bf e }_m\|_{\mathbb{L}^2(\mt)^2}^2-\|{\bf e }_1\|_{\mathbb{L}^2(\mt)^2}^2)\big]
&+ \frac{\nu}{3}\sum_{n=1}^{m-1}
\mathbb E\big[\mathcal{A}_R\|\nabla {\bf e }^{n+\frac12}\|_{\mathbb{L}^2(\mt)^4}^2\big]
\\&\qquad\le C\,R^2\,\tau^3+ C\,R^2\sum_{n=1}^{m-1}\mathbb E\big[\mathcal{A}_R\|{\bf e }_n\|_{\mathbb{L}^2(\mt)^2}^2\big].
\end{align*}
By using a discrete Gronwall inequality, we obtain for any $m>1$
\begin{align}\label{today1}
\mathbb E\big[\mathcal{A}_R\|{\bf e }_m\|_{\mathbb{L}^2(\mt)^2}^2\big]+\nu\,\tau\sum_{n=0}^{m-1}
\mathbb E\big[\mathcal{A}_R\|\nabla {\bf e }^{n+\frac12}\|_{\mathbb{L}^2(\mt)^4}^2\big]
\le {\bf e }^{CR^2}\big(\mathbb E\big[\mathcal{A}_R\|{\bf e }_1\|_{\mathbb{L}^2(\mt)^2}^2\big]+ \tau^{3}\big).
\end{align}
For $m=1$, Lemma~\ref{lem:balanced-local} yields
\begin{align}\label{today2}
\mathbb E\big[\mathcal{A}_R \|{\bf e }_1\|_{\mathbb{L}^2(\mt)^2}^2\big]+\nu\,\tau
\mathbb E\big[\mathcal{A}_R\|\nabla {\bf e }^{\frac12}\|_{\mathbb{L}^2(\mt)^4}^2\big]
\le C\,R^2\tau^{3}.
\end{align}
By combining \eqref{today1}--\eqref{today2}, it completes
the proof of Theorem~\ref{thm:main-local}.
\end{proof}

\subsection{Strong rate of convergence for the pressure $p$}\label{strong_pressure}
To derive an error bound for the pressure, we (temporarily) work with the
full velocity space $\mathbb H^1({\mathbb{T}^2})^2$ and the mixed formulation
of the stochastic Navier--Stokes system. We assume the standard continuous
inf--sup (Ladyzhenskaya--Babu\v{s}ka--Brezzi) condition: there exists
$\beta>0$ such that
\begin{equation}\label{eq:inf-sup-cont}
  \beta \,\|q\|_{\mathbb L^2({\mathbb{T}^2})}
  \;\le\;
  \sup_{0\neq {\bf v}\in\mathbb H^1({\mathbb{T}^2})^2}
  \frac{(q,\mathrm{div}\, {\bf v })}{\|\nabla {\bf v}\|_{\mathbb L^2({\mathbb{T}^2})^4}},
  \qquad \forall\,q\in\mathbb L_0^2({\mathbb{T}^2}).
\end{equation}

Let $p(t)$ denote the (continuous) pressure associated with the exact
solution $({\bf y}(t),p(t))$ of the {\em random} Navier--Stokes system~\eqref{eq:random-NS}, and let
$p_{n+1}\in\Qspace$ be the discrete pressure at time level $t_{n+1}$ from
the time--semi-discrete scheme. We define the pressure error
\[
\pi_{n+1}:=\frac{1}{\tau}\int_{t_{n}}^{t_{n+1}}p(t)\,\mathrm{d}t-p_{n+1}\in\Qspace.
\]
We also define the large probability event based on the discrete velocity
component as follows:
\begin{align*}
    \widetilde{\Omega}_R
    := 
\Big\{\omega\in \Omega_R:\,
    \frac{1}{\tau^3e^{cR^2}}\sup_{0\le m\le N-1}\bigg(\tau\sum_{n=1}^m\|{\bf e}^{n+\frac{1}{2}}\|^2_{\mathbb{V}}\bigg)+\sup_{0\le n\le N-1}\|\Phi\mathcal{I}_\ell^W\|^2_{\mathbb{V}}\le R^2\Big\},
\end{align*}
and denote its indicator by
\begin{align*}
    \mathcal{B}_R:=\mathbf{1}_{\widetilde{\Omega}_R}.
\end{align*}
On $\widetilde{\Omega}_R,$ we obtain
\begin{align*}
	\sup_{0\le\,n\le\,N-1}\|{\bf e}^{n+\frac{1}{2}}\|_{\mathbb{V}}^2\le C\tau^2\,R^2\,e^{cR^2}\le C\tau^2\,e^{cR^2}.
\end{align*}
It gives that
\begin{align*}
	\sup_{0\le\,n\le\,N-1}\|{\bf e}_{n+1}\|_{\mathbb{V}}^2\le\,\sum_{m=0}^{n+1}\|e^{m+\frac{1}{2}}\|_{\mathbb{V}}^2\le \frac{1}{\tau} C\tau^2\,e^{cR^2}\le\,C\tau\,e^{cR^2}.
	\end{align*}
It also provide the following bound,
\begin{align}\label{new}
\sup_{0\le\,n\le N-1}\mathcal{B}_R	\|{\bf y}_\star^{n+\frac12}+\Phi\mathcal I_n^W\|_{\mathbb V}^2\le e^{cR^2}
\end{align}
Note that Theorem \ref{thm:main-local} yields
\begin{align*}
\mathbb P\bigg(\bigg\{  \frac{1}{\tau^3e^{cR^2}}\mathcal A_R\sup_{0\le n\le N-1}\bigg(\tau\sum_{\ell=1}^n\|{\bf e}_{\ell}\|^2_{\mathbb{V}}\bigg)\bigg\}> R^2\bigg)&\leq\frac{1}{R^2}\mathbb E\bigg[  \frac{1}{\tau^3e^{cR^2}}\mathcal{A}_R\bigg(\tau\sum_{\ell=1}^N\|{\bf e}_{\ell}\|^2_{\mathbb{V}}\bigg)\bigg]\\
&\leq \frac{1}{R^2}\rightarrow0
\end{align*}
as $R\rightarrow\infty$.

\begin{theorem}[Second main result]\label{lem:pressure-local}
Under the assumptions \eqref{regularity}, the inf--sup
condition~\eqref{eq:inf-sup-cont}, there exists a constant $C\,>0$,
independent of $\tau$ and $N$, such that
\begin{equation}\label{eq:pressure-rate}
  \tau\sum_{n=0}^{N-1}
     \mathbb E\big[\mathcal{B}_R\|\pi_{n+1}\|_{\mathbb L^2(\mt)}^2\big]
  \;\le\; {\bf e }^{C\,R^2}\,\tau^{3}.
\end{equation}
\end{theorem}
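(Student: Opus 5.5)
The bound follows from the continuous inf--sup condition \eqref{eq:inf-sup-cont} applied to $\pi_{n+1}$, with the velocity error equation written in the full space $\mathbb H^1(\mt)^2$.

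\textbf{Error equation with pressure.} Integrating the mixed form of \eqref{eq:random-NS} over $I_n$ and testing with arbitrary $\boldsymbol{\varphi}\in\mathbb H^1(\mt)^2$ gives \eqref{eq:cont-avg} augmented by the term $-(\frac1\tau\int_{t_n}^{t_{n+1}}p\,\mathrm{d}t,\mathrm{div}\,\boldsymbol{\varphi})$. Subtracting the first line of \eqref{scheme:main0} then yields, for all $\boldsymbol{\varphi}\in\mathbb H^1(\mt)^2$,
\begin{align*}
(\pi_{n+1},\mathrm{div}\,\boldsymbol{\varphi})
&=\Big(\frac{{\bf e}_{n+1}-{\bf e}_n}{\tau},\boldsymbol{\varphi}\Big)+\nu(\nabla{\bf e}^{n+\frac12},\nabla\boldsymbol{\varphi})
+\mathcal C\big({\bf y}_\star^{n+\frac12}+\Phi\mathcal I_n^W,{\bf e}^{n+\frac12},\boldsymbol{\varphi}\big)\\
&\quad-\mathcal C({\bf e}_\star^{n+\frac12},{\bf Y}^{n+\frac12},\boldsymbol{\varphi})-\big\langle\widetilde{\mathcal R}_1^{n+\frac12},\boldsymbol{\varphi}\big\rangle,
\end{align*}
where the noise mismatch $\mathcal N^{n+\frac12}$ is absorbed into $\widetilde{\mathcal R}_1^{n+\frac12}$ as in the Remark after Lemma~\ref{lem:R-bound}. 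Dividing by $\|\nabla\boldsymbol{\varphi}\|$, taking the supremum and using \eqref{eq:inf-sup-cont} and \eqref{eq:C-cont} gives
\[
\beta\|\pi_{n+1}\|\le \Big\|\frac{{\bf e}_{n+1}-{\bf e}_n}{\tau}\Big\|_{\mathbb H^{-1}}+\nu\|\nabla{\bf e}^{n+\frac12}\|+C\|{\bf y}_\star^{n+\frac12}+\Phi\mathcal I_n^W\|_{\mathbb V}\|{\bf e}^{n+\frac12}\|_{\mathbb V}+CR\|{\bf e}_\star^{n+\frac12}\|_{\mathbb V}+\|\widetilde{\mathcal R}_1^{n+\frac12}\|_{\mathbb V^{-1}}.
\]
Square this, multiply by $\mathcal B_R\le\mathcal A_R$, take expectations and sum $\tau\sum_n$.

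\textbf{The routine terms.}
\begin{itemize}
\item The residual contributes $C R^2(\tau^3+\tau\cdot\tau^2)$ by \eqref{20}.
\item The viscous term is controlled by \eqref{eq:main-local-H1}.
\item On $\widetilde\Omega_R$ the advecting field is bounded by $e^{cR^2}$ via \eqref{new}. Together with \eqref{eq:main-local-H1} the convection term therefore contributes $e^{CR^2}\tau^3$.
\item The term ${\bf e}_\star$ is a combination of ${\bf e}_n,{\bf e}_{n-1}$. Its $\mathbb V$-norm is handled through $\mathcal B_R$ and the definition of $\widetilde\Omega_R$, whose sum bound is of order $\tau^3e^{cR^2}$, or through the midpoint gradients.
\end{itemize}

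\textbf{Main obstacle: the discrete time derivative.} The hard term is $\|\frac{{\bf e}_{n+1}-{\bf e}_n}{\tau}\|_{\mathbb H^{-1}}$, which the velocity estimate does not control directly. The plan is to test the same error equation with $\boldsymbol{\varphi}\in\mathbb V$ and treat it as a bound on $\partial_\tau{\bf e}$ in $\mathbb V^{-1}$, since on solenoidal test functions the pressure drops out. This expresses $\|\partial_\tau {\bf e}\|_{\mathbb V^{-1}}$ through exactly the remaining terms above, so these bounds are reused.

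The inf--sup estimate, however, needs the full $\mathbb H^{-1}$ norm. On the torus one would pass to it using the Helmholtz decomposition: $\partial_\tau{\bf e}$ is divergence-free, because both ${\bf y}(t_n)$ and ${\bf y}_n$ are. Hence its pairing with $\boldsymbol{\varphi}$ equals its pairing with $P_{\mathrm{HL}}\boldsymbol{\varphi}$, and $\|\nabla P_{\mathrm{HL}}\boldsymbol{\varphi}\|\le\|\nabla\boldsymbol{\varphi}\|$ on $\mathbb T^2$. This step is where care is needed; if it fails, one would instead choose $\boldsymbol{\varphi}$ as a Bogovskii/Stokes-type lift with $\mathrm{div}\,\boldsymbol{\varphi}=\pi_{n+1}$ and $\boldsymbol{\varphi}\perp\mathbb V$.

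Collecting all terms gives $\tau\sum_n\mathbb E[\mathcal B_R\|\pi_{n+1}\|^2]\le e^{CR^2}\tau^3$, which is \eqref{eq:pressure-rate}.
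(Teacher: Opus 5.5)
Your overall route is the paper's. You use the same pressure error identity tested with arbitrary $\boldsymbol{\varphi}\in\mathbb H^1(\mt)^2$, the inf--sup condition, and a separate $\mathbb V^{-1}$ bound for $({\bf e}_{n+1}-{\bf e}_n)/\tau$ obtained by testing with solenoidal functions (Lemma~\ref{lem:dtd-H-1}). You then pass to non-solenoidal test functions through the Helmholtz projection (the paper writes ${\bf v}-\nabla\Delta^{-1}\mathrm{div}\,{\bf v}$); on $\mt$ that step is fine, so no Bogovskii lift is needed.

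One step, however, fails as written. You bound the convection defect via \eqref{eq:C-cont} by $CR\|{\bf e}_\star^{n+\frac12}\|_{\mathbb V}$, and then need $\tau\sum_n\mathbb E[\mathcal B_R\|{\bf e}_\star^{n+\frac12}\|_{\mathbb V}^2]\le e^{CR^2}\tau^3$. Neither Theorem~\ref{thm:main-local} nor the definition of $\widetilde\Omega_R$ gives this: both control gradients only at the midpoints ${\bf e}^{n+\frac12}$, whereas ${\bf e}_\star^{n+\frac12}=\tfrac32{\bf e}_n-\tfrac12{\bf e}_{n-1}$ consists of nodal values. Recovering these from midpoints, ${\bf e}_n=2\sum_{k=0}^{n-1}(-1)^{n-1-k}{\bf e}^{k+\frac12}$, loses a factor $n\sim\tau^{-1}$. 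On $\widetilde\Omega_R$ one only gets $\|{\bf e}_n\|_{\mathbb V}^2\le 4n\sum_k\|{\bf e}^{k+\frac12}\|_{\mathbb V}^2\le C\tau\,e^{cR^2}$, which is the bound the paper records right after defining $\widetilde\Omega_R$. This would leave you with $\mathcal O(\tau)$ instead of $\mathcal O(\tau^3)$ in \eqref{eq:pressure-rate}. Rewriting ${\bf e}_\star^{n+\frac12}={\bf e}^{n-\frac12}+({\bf e}_n-{\bf e}_{n-1})$ does not help either, because the increment is controlled only in $\mathbb V^{-1}$. Crank--Nicolson energy estimates simply do not yield nodal $\mathbb H^1$ errors.

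The remedy, as in the paper, is to put all derivatives on the exact solution and use that the localization $\Omega_R$ is in $\mathbb H^2$. In two dimensions
\[
|\mathcal C({\bf e}_\star^{n+\frac12},{\bf Y}^{n+\frac12},\boldsymbol{\varphi})|
\le\|{\bf e}_\star^{n+\frac12}\|_{\mathbb L^2}\,\|\nabla{\bf Y}^{n+\frac12}\|_{\mathbb L^4}\,\|\boldsymbol{\varphi}\|_{\mathbb L^4}
\le CR\,\|{\bf e}_\star^{n+\frac12}\|_{\mathbb L^2}\,\|\nabla\boldsymbol{\varphi}\|_{\mathbb L^2},
\]
with Poincar\'e used as in \eqref{eq:C-cont}. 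Then only nodal $\mathbb L^2$ errors enter, and these are bounded by $e^{CR^2}\tau^3$ uniformly in $n$ by Theorem~\ref{thm:main-local} together with $\mathcal B_R\le\mathcal A_R$. The same change is needed in your $\mathbb V^{-1}$ bound for the discrete time derivative, since you reuse the same list of terms there.

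A smaller point: because the pressure identity is tested with non-solenoidal $\boldsymbol{\varphi}$, the residual must be measured in $\mathbb H^{-1}$ rather than $\mathbb V^{-1}$. The bound \eqref{20} carries over, since none of the residual estimates uses $\mathrm{div}\,\boldsymbol{\varphi}=0$. With these corrections your argument coincides with the paper's proof.
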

\begin{remark}[Time average of the analytical pressure] \label{pressure_remark}
		In Theorem~\ref{lem:pressure-local} we compare the discrete pressure $p^{n+1}$ with the
	time average of the analytical pressure over the interval $(t_n,t_{n+1}]$, i.e.,
	\[
	\frac{1}{\tau}\int_{t_n}^{t_{n+1}} p(t)\,dt,
	\qquad \tau:=t_{n+1}-t_n.
	\]
	This choice is natural because, after applying the random transformation
	(cf.~\eqref{zwei}), one cannot, in general, expect the pressure to possess a meaningful
	time derivative. Moreover, the pressure itself is unchanged by the transformation:
	it is the same pressure that appears in SPDE~\eqref{eq:stochastic-NS} and in the
	corresponding random PDE~\eqref{eq:random-NS}. Consequently, it is not appropriate to
	compare $p^{n+1}$ with the pointwise values $p(t_{n+1})$ or $p(t_n)$ in order to obtain
	a higher convergence rate. The time-averaged pressure is the correct quantity that
	can be estimated with the available temporal regularity.

\end{remark}

The remainder of this subsection is devoted to the proof of
Theorem~\ref{lem:pressure-local}. 
\subsubsection{Discrete time derivative estimate}

\begin{lemma}[Discrete time derivative in $\mathbb V^{-1}$]\label{lem:dtd-H-1}
Let the assumptions~\eqref{regularity} hold. Then there exists a
constant $C>0$, independent of $\tau$ and $N$, such that
\begin{equation}\label{eq:dtd-H-1}
  \tau\sum_{n=0}^{N-1}
  \mathbb E\Big[\mathcal{B}_R\,\frac{\|{\bf e }_{n+1}-{\bf e }_n\|_{\mathbb V^{-1}}^2}{\tau^2}\Big]
  \;\le\; {\bf e }^{C\,R^2}\,\tau^3.
\end{equation}
\end{lemma}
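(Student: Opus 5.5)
The plan is to read off $\frac{{\bf e}_{n+1}-{\bf e}_n}{\tau}$ from the error equation derived in the proof of Lemma~\ref{lem:balanced-local}, tested only with $\boldsymbol{\varphi}\in\mathbb V$ so that the pressure disappears. That identity reads
\begin{align*}
\Big(\frac{{\bf e}_{n+1}-{\bf e}_n}{\tau},\boldsymbol{\varphi}\Big)
&= -\nu(\nabla{\bf e}^{n+\frac12},\nabla\boldsymbol{\varphi})
-\mathcal C\big({\bf y}_\star^{n+\frac12}+\Phi\mathcal I_n^W,{\bf e}^{n+\frac12},\boldsymbol{\varphi}\big)\\
&\quad+\big\langle\widetilde{\mathcal R}_1^{n+\frac12},\boldsymbol{\varphi}\big\rangle
+\big\langle\mathcal N^{n+\frac12},\boldsymbol{\varphi}\big\rangle
+\mathcal C\big({\bf e}_\star^{n+\frac12},{\bf Y}^{n+\frac12},\boldsymbol{\varphi}\big).
\end{align*}
Taking the supremum over $\|\nabla\boldsymbol{\varphi}\|_{\mathbb L^2}\le1$ gives a bound on $\tau^{-1}\|{\bf e}_{n+1}-{\bf e}_n\|_{\mathbb V^{-1}}$ by five terms. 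I will then square these, multiply by $\mathcal B_R$, take expectations, multiply by $\tau$, and sum over $n$.

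The individual terms are handled as follows.

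\emph{Viscous term.} It is bounded by $\nu\|\nabla{\bf e}^{n+\frac12}\|$. After summation this is controlled by \eqref{eq:main-local-H1}, using $\mathcal B_R\le\mathcal A_R$ since $\widetilde\Omega_R\subset\Omega_R$.

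\emph{Residual and noise terms.} The estimate \eqref{20} bounds $\widetilde{\mathcal R}_1$ by $CR^2(\tau^3+\mathbf 1_{\{n=0\}}\tau^2)$, and \eqref{eq:QW2vsIW2} bounds $\mathcal N^{n+\frac12}$ in $\mathbb V^{-1}$ with second moment $C\tau^3$. After multiplying by $\tau$ and summing, the $n=0$ contribution is $\tau\cdot\tau^2=\tau^3$, which is acceptable.

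\emph{Convection defect.} By \eqref{eq:C-cont} and $\|{\bf Y}^{n+\frac12}\|_{\mathbb V}\le R$ on $\Omega_R$, this term is at most $CR\|{\bf e}_\star^{n+\frac12}\|_{\mathbb H^1}$. Its sum is bounded by $\tau\sum\|{\bf e}_n\|_{\mathbb V}^2$, which is again controlled via Theorem~\ref{thm:main-local} (with the same mismatch between nodal errors and midpoint gradients that the paper already uses when defining $\widetilde\Omega_R$).

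\emph{Transport term (main obstacle).} This is the genuinely new term. The cancellation of Lemma~\ref{lem:transport} is not available here, since $\boldsymbol{\varphi}\neq{\bf e}^{n+\frac12}$. By \eqref{eq:C-cont},
\[
\big|\mathcal C\big({\bf y}_\star^{n+\frac12}+\Phi\mathcal I_n^W,{\bf e}^{n+\frac12},\boldsymbol{\varphi}\big)\big|
\le C\,\|{\bf y}_\star^{n+\frac12}+\Phi\mathcal I_n^W\|_{\mathbb V}\,\|{\bf e}^{n+\frac12}\|_{\mathbb V}\,\|\nabla\boldsymbol{\varphi}\|,
\]
so a bound on the discrete advecting field in $\mathbb V$ is required. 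This is exactly why $\mathcal B_R$ is introduced: on $\widetilde\Omega_R$, estimate \eqref{new} gives $\|{\bf y}_\star^{n+\frac12}+\Phi\mathcal I_n^W\|_{\mathbb V}^2\le e^{cR^2}$. Hence this term contributes at most $e^{cR^2}\,\tau\sum_n\mathbb E[\mathcal A_R\|{\bf e}^{n+\frac12}\|_{\mathbb V}^2]$, which is $\le e^{CR^2}\tau^3$ by Theorem~\ref{thm:main-local}. Together with a Poincaré/mean-zero argument controlling the full $\mathbb V$-norm of ${\bf e}^{n+\frac12}$ by its gradient plus the $\mathbb L^2$ bound \eqref{eq:main-local-L2}, this closes the estimate.

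The delicate points are therefore twofold. First, the transport term must be bounded deterministically on $\widetilde\Omega_R$ using \eqref{new}, since no a priori discrete energy bound is available. Second, one must check that the $n=0$ terms only cost $\tau^3$ after multiplication by $\tau$. Collecting all contributions yields \eqref{eq:dtd-H-1} with the constant $e^{CR^2}$.
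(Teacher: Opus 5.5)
Your overall structure is the paper's: test the error identity with ${\bf v}\in\mathbb V$, bound each term on $\widetilde\Omega_R$, and use \eqref{new} for the transport term $\mathcal C({\bf y}_\star^{n+\frac12}+\Phi\mathcal I_n^W,{\bf e}^{n+\frac12},{\bf v})$. You then close with Theorem~\ref{thm:main-local} and \eqref{20}. The gap is in the convection defect $\mathcal C({\bf e}_\star^{n+\frac12},{\bf Y}^{n+\frac12},{\bf v})$. You put the $\mathbb H^1$-norm on ${\bf e}_\star^{n+\frac12}$ and only the $\mathbb V$-norm on ${\bf Y}^{n+\frac12}$, so you need $\tau\sum_n\mathbb E[\mathcal B_R\|{\bf e}_n\|_{\mathbb V}^2]\le e^{CR^2}\tau^3$. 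Theorem~\ref{thm:main-local} does not give this. It controls nodal errors only in $\mathbb L^2$, and gradients only of the midpoint averages ${\bf e}^{n+\frac12}$. These averages do not control nodal values uniformly, since they annihilate the oscillating mode $(-1)^n{\bf a}$.

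The only available conversion is ${\bf e}_n=2\sum_{m<n}(-1)^{n-1-m}{\bf e}^{m+\frac12}$ followed by Cauchy--Schwarz; this is the ``mismatch'' step from the definition of $\widetilde\Omega_R$. It yields $\|{\bf e}_n\|_{\mathbb V}^2\lesssim e^{cR^2}\tau$, hence $\tau\sum_n\|{\bf e}_n\|_{\mathbb V}^2\lesssim e^{cR^2}\tau$. The paper can afford this crude bound there because \eqref{new} only needs an $O(1)$ estimate. For your term it degrades the final rate from $\tau^3$ to $\tau$. Rewriting ${\bf e}_\star^{n+\frac12}={\bf e}^{n-\frac12}+({\bf e}_n-{\bf e}_{n-1})$ does not help either: it reintroduces the increment in $\mathbb H^1$, the very quantity you are trying to control in the weaker $\mathbb V^{-1}$-norm.

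The fix is to distribute derivatives as the paper does. In two dimensions,
$|\mathcal C({\bf a},{\bf b},{\bf v})|\le\|{\bf a}\|_{\mathbb L^2}\|\nabla{\bf b}\|_{\mathbb L^4}\|{\bf v}\|_{\mathbb L^4}\le C\|{\bf a}\|_{\mathbb L^2}\|{\bf b}\|_{\mathbb H^2}\|{\bf v}\|_{\mathbb H^1}$.
So with an $\mathbb H^2$-bound $R$ on the coefficient, the defect is at most $CR\|{\bf e}_\star^{n+\frac12}\|_{\mathbb L^2}\|{\bf v}\|_{\mathbb H^1}$. That bound is available on $\Omega_R$, if necessary after adding $\sup_t\|\Phi W(t)\|_{\mathbb H^2}\le R$ to the localization, because the coefficient is $\bar{\bf y}^{n+\frac12}+\Phi\mathcal I_n^W$ rather than ${\bf Y}(t)$. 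Then $\tau\sum_n\mathbb E[\mathcal B_R\|{\bf e}_\star^{n+\frac12}\|_{\mathbb L^2}^2]\le e^{CR^2}\tau^3$ follows directly from the nodal $\mathbb L^2$ bound of Theorem~\ref{thm:main-local}. With this change the rest of your argument goes through as in the paper, including your treatment of the noise term and the $n=0$ contribution.
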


\begin{proof}
We prove this result in several steps.

\medskip\noindent
\emph{Step 1: error equation with general test function.}
For $n\ge1$, integrating the continuous equation over $I_n=[t_n,t_{n+1}]$,
testing with an arbitrary ${\bf v}\in \mathbb V$, and subtracting the discrete step
\eqref{scheme:main}, we obtain the error identity
\begin{align}\label{eq:err-dtd}
\Big(\frac{{\bf e }_{n+1}-{\bf e }_n}{\tau},{\bf v }\Big)
&+ \nu\,(\nabla {\bf e }^{n+\frac12},\nabla {\bf v })
+ \mathcal C\big({\bf y}_\star^{n+\frac12}+\Phi\mathcal I_n^W,\,
                 {\bf e }^{n+\frac12},{\bf v }\Big)\notag\\
&\quad + \mathcal C({\bf e }_\star^{\,n+\frac12},\bar {{\bf Y}}^{n+\frac12},{\bf v })\notag\\
&= \big\langle\widetilde{\mathcal R}^{n+\frac12},{\bf v }\Big\rangle-\left<\mathcal{N}^{n+\frac12}, {\bf v}\right>,
\end{align}
where $\widetilde{\mathcal R}^{n+\frac12}$ is the modified residual
\eqref{modified-residual-2} and $\mathcal{N}^{n+\frac12}$ is the combined noise \eqref{combined noise}.
As in the velocity error analysis, we introduce the second modified
residual
\[
\big\langle\widetilde{\mathcal R}_1^{n+\frac12},{\bf v }\Big\rangle
:= \big\langle\widetilde{\mathcal R}^{n+\frac12},{\bf v }\Big\rangle
 -\left<\mathcal{N}^{n+\frac12},{\bf v}\right>,
\]
so that \eqref{eq:err-dtd} can be written as
\begin{align}\label{eq:err-dtd-tildeR}
\Big(\frac{{\bf e }_{n+1}-{\bf e }_n}{\tau},{\bf v }\Big)
&= - \nu\,(\nabla {\bf e }^{n+\frac12},\nabla {\bf v })
    - \mathcal C\big({\bf y}_\star^{n+\frac12}+\Phi\mathcal I_n^W,\,
                     {\bf e }^{n+\frac12},{\bf v }\Big)\notag\\
&\quad - \mathcal C({\bf e }_\star^{\,n+\frac12},{{\bf Y}}^{n+\frac12},{\bf v })
    + \big\langle\widetilde{\mathcal R}_1^{n+\frac12},{\bf v }\Big\rangle,
\qquad \forall {\bf v}\in \mathbb V.
\end{align}

\medskip\noindent
\emph{Step 2: $\mathbb V^{-1}$--bound for the discrete time derivative.}
By definition of the $\mathbb V^{-1}$--norm,
\[
\Big\|\frac{{\bf e }_{n+1}-{\bf e }_n}{\tau}\Big\|_{\mathbb V^{-1}}
= \sup_{0\neq {\bf v}\in \mathbb V}\frac{\big|\big(\frac{{\bf e }_{n+1}-{\bf e }_n}{\tau},{\bf v }\Big)\big|}
                            {\|\nabla {\bf v}\|_{\mathbb{L}^2(\mt)^4}}.
\]
By using \eqref{eq:err-dtd-tildeR} and estimating each term on the
right--hand side, we obtain on $\widetilde{\Omega}_R$:
\begin{itemize}[leftmargin=2em]
  \item Viscous term: we have
  \[
  \nu\,|(\nabla {\bf e }^{n+\frac12},\nabla {\bf v })|
  \le \nu\,\|\nabla {\bf e }^{n+\frac12}\|_{\mathbb{L}^2(\mt)^4}\,\|\nabla {\bf v}\|_{\mathbb{L}^2(\mt)^4}.
  \]
  \item Transport term with ${\bf y}_\star^{n+\frac12}+\Phi\mathcal I_n^W$:
 by using the continuity of $\mathcal C$ in transport form~\eqref{eq:C-cont} and bound~\eqref{new}
  on $\widetilde{\Omega}_R$,
  \begin{align*}
  \big|\mathcal C({\bf y}_\star^{n+\frac12}+\Phi\mathcal I_n^W,{\bf e }^{n+\frac12},{\bf v })\big|
&\le C\,\|{\bf y}_\star^{n+\frac12}+\Phi\mathcal I_n^W\|_{\mathbb{V}}\|{\bf e }^{n+\frac12}\|_{\mathbb{V}}\,\|\nabla {\bf v}\|_{\mathbb{L}^2} \\& \le C\sqrt{e^{c{R}^2}}\|{\bf e }^{n+\frac12}\|_{\mathbb{V}}\,\|\nabla {\bf v}\|_{\mathbb{L}^2(\mt)^4}.
  \end{align*}
  \item Convection defect term:
  again by \eqref{eq:C-cont} and \eqref{eq:UR-H1-bound}, we obtain
  \[
  |\mathcal C({\bf e }_\star^{\,n+\frac12},\bar {{\bf Y}}^{n+\frac12},{\bf v })|
  \le C\,R\,\|{\bf e }_\star^{\,n+\frac12}\|_{\mathbb{L}^2(\mt)^2}\,\|\nabla {\bf v}\|_{\mathbb{L}^2(\mt)^4}.
  \]
  \item Modified residual term: for
  any $v\in V$, we have
  \[
  \big|\big\langle\widetilde{\mathcal R}_1^{n+\frac12},{\bf v }\Big\rangle\big|
  \le \|\widetilde{\mathcal R}_1^{n+\frac12}\|_{\mathbb{V}^{-1}}\,
  \|\nabla {\bf v}\|_{\mathbb L ^2}.
  \]
\end{itemize}

By combining these estimates in \eqref{eq:err-dtd-tildeR} and dividing by
$\|\nabla v\|_{\mathbb{L}^2}$, we obtain on $\widetilde{\Omega}_R$:
\begin{align}\label{eq:dtd-local-square}
\begin{aligned}
\Big\|\frac{{\bf e }_{n+1}-{\bf e }_n}{\tau}\Big\|_{\mathbb V^{-1}}^2
&\le C\,\Big(
 \|\nabla {\bf e }^{n+\frac12}\|_{\mathbb{L}^2(\mt)^4}^2
 +e^{cR^2}\,\|{\bf e }^{n+\frac12}\|_{\mathbb{V}}^2\\
 & +R^2\|{\bf e }^{n+\frac12}\|_{\mathbb{V}}^2+\|{\bf e }_\star^{\,n+\frac12}\|_{\mathbb{L}^2(\mt)^2}^2
 +\|\widetilde{\mathcal R}_1^{n+\frac12}\|_{\mathbb{V}^{-1}}^2\Big),
\end{aligned}
\end{align}
where inequality~\eqref{new} is used.

\medskip\noindent
\emph{Step 3: summation in time and use of the velocity error bounds.}
We now take expectations in \eqref{eq:dtd-local-square}, multiply by
$\tau$, and sum over $n=1,\dots,N-1$:
\begin{align}\label{38}
\tau\sum_{n=1}^{N-1}
\mathbb E\Big[\mathcal{B}_R\Big\|\frac{{\bf e }_{n+1}-{\bf e }_n}{\tau}\Big\|_{\mathbb V^{-1}}^2\Big]
&\le C\,R^2\,\tau\sum_{n=0}^{N-1}
\mathbb E\big[\mathcal{B}_R\|\nabla {\bf e }^{n+\frac12}\|_{\mathbb{L}^2(\mt)^4}^2\big]\notag\\
&\quad + C\,e^{cR^2}\,R^2\,\tau\sum_{n=0}^{N-1}
\mathbb E\big[\mathcal{B}_R\|{\bf e }^{n+\frac12}\|_{\mathbb{V}}^2\big]\notag\\
&\quad + C\,R^2\,\tau\sum_{n=0}^{N-1}
\mathbb E\big[\mathcal{B}_R\|{\bf e }_\star^{\,n+\frac12}\|_{\mathbb{L}^2(\mt)^2}^2\big]\notag\\
&\quad + \tau\sum_{n=0}^{N-1}\mathbb{E}\big[\mathcal{B}_R \|\widetilde{\mathcal R}_1^{n+\frac12}\|_{\mathbb{V}^{-1}}^2\big].
\end{align}
By Theorem~\ref{thm:main-local},
\[
\max_{0\le n\le N}\mathbb E\big[\mathcal{B}_R\|{\bf e }_n\|_{\mathbb{L}^2(\mt)^2}^2\big]
\le {\bf e }^{C\,R^2}\tau^3,
\qquad
\tau\sum_{n=0}^{N-1}
\mathbb E\big[\mathcal{B}_R\|\nabla {\bf e }^{n+\frac12}\|_{\mathbb{L}^2(\mt)^4}^2\big]
\le {\bf e }^{C\,R^2}\tau^3.
\]
Moreover, for the extrapolated error
${\bf e }_\star^{\,n+\frac12}=\tfrac32 {\bf e }_n-\tfrac12 {\bf e }_{n-1}$ one has
\[
\|{\bf e }_\star^{\,n+\frac12}\|_{\mathbb{L}^2(\mt)^2}^2
\le C\big(\|{\bf e }_n\|_{\mathbb{L}^2(\mt)^2}^2+\|{\bf e }_{n-1}\|_{\mathbb{L}^2(\mt)^2}^2\big),
\]
so that
\[
\tau\sum_{n=0}^{N-1}
\mathbb E\big[\mathcal{B}_R\|{\bf e }_\star^{\,n+\frac12}\|_{\mathbb{L}^2(\mt)^2}^2\big]
\le {\bf e }^{C\,R^2}\,\tau^3.
\]
By putting everything together with the estimate \eqref{20} for the
second modified residual $\widetilde{\mathcal{R}}_1^{n+\frac12}$ in the
error inequality \eqref{38}, we deduce
\[
\tau\sum_{n=0}^{N-1}
\mathbb E\Big[\mathcal{B}_R\Big\|\frac{{\bf e }_{n+1}-{\bf e }_n}{\tau}\Big\|_{\mathbb V^{-1}}^2\Big]
\le {\bf e }^{C\,R^2}\,\tau^3,
\]
which is exactly \eqref{eq:dtd-H-1}. This completes the proof.
\end{proof}

\subsubsection{Proof of Theorem~\ref{lem:pressure-local}}

\begin{proof}
Fix $n\in\{0,\dots,N-1\}$ and work on the set $\widetilde{\Omega}_R$, where
$\mathcal B_R=1$ and where we have the uniform $\mathbb H^2({\mathbb{T}^2})$--bound for
${\bf {Y}}(t)={\bf y}(t)+\Phi W(t)$, see \eqref{eq:UR-H1-bound}. All bounds below hold
pathwise on $\widetilde{\Omega}_R$, with constants depending on $T, \nu$ but
not on $n$ or~$\tau$.

\medskip\noindent
\emph{Step 1: inf--sup applied to the instantaneous pressure error.}
For each $\omega\in\widetilde{\Omega}_R$, the continuous inf--sup condition
\eqref{eq:inf-sup-cont} gives
\begin{equation}\label{eq:pi-infsup-pathwise}
  \beta\,\|\pi_{n+1}(\omega)\|_{\mathbb L^2({\mathbb{T}^2})}
  \;\le\;
  \sup_{0\neq {\bf v}\in\mathbb H^1({\mathbb{T}^2})^2}
  \frac{(\pi_{n+1}(\omega),\mathrm{div}\,{\bf v })}{\|\nabla {\bf v}\|_{\mathbb L^2({\mathbb{T}^2})^2}}.
\end{equation}
Hence, to bound $\|\pi_{n+1}(\omega)\|$ it suffices to estimate
\[
(\pi_{n+1},\mathrm{div}\, {\bf v })
=\Big(\frac{1}{\tau}\int_{t_n}^{t_{n+1}}p(t)\,\mathrm{d}t-p_{n+1},\mathrm{div}\, {\bf v }\Big)
\]
for arbitrary ${\bf v}\in\mathbb H^1({\mathbb{T}^2})^2$.

\medskip\noindent
\emph{Step 2: error identity with a general test function.}
We briefly sketch the standard derivation of a pressure error identity;
the structure is the same as in the velocity error analysis,
but we do not restrict the test function to be divergence-free.

On the one hand, integrating the continuous momentum balance over
$I_n=[t_n,t_{n+1}]$, writing it in terms of ${\bf {Y}}(t)={\bf y}(t)+\Phi W(t)$, and
testing by an arbitrary ${\bf v}\in\mathbb H^1({\mathbb{T}^2})^2$ yields
\begin{align}\label{eq:cont-average-mom}
\Big(\frac{{\bf y}(t_{n+1})-{\bf y}(t_n)}{\tau},{\bf v }\Big)
&+ \frac{1}{\tau}\int_{t_n}^{t_{n+1}}\mathcal C({\bf {Y}}(t),{\bf {Y}}(t),{\bf v })\,\mathrm{d}t
 + \nu\,\frac{1}{\tau}\int_{t_n}^{t_{n+1}}(\nabla {\bf {Y}}(t),\nabla {\bf v })\,\mathrm{d}t\notag\\
&\quad - \frac{1}{\tau}\int_{t_n}^{t_{n+1}}(p(t),\mathrm{div}\, {\bf v })\,\mathrm{d}t
= 0.
\end{align}

On the other hand, the discrete scheme at step $n$ (for $n\ge1$) gives,
for all ${\bf v}\in\mathbb H^1({\mathbb{T}^2})^2$,
\begin{align}\label{eq:disc-mom-general}
\Big(\frac{{\bf y}_{n+1}-{\bf y}_n}{\tau}, {\bf v }\Big)
&+ \mathcal C\!\big({\bf y}_\star^{n+\frac12}+\Phi\mathcal I_n^W,\;
                     {{\bf y}}^{n+\frac12}+\Phi\mathcal I_n^W,\;
                     {\bf v }\Big)
 -\big(\mathcal I_n^{W^2},\nabla {\bf v }\big)\notag\\
&\quad
 + \nu\,(\nabla({{\bf y}}^{n+\frac12}+\Phi\mathcal I_n^W),\nabla {\bf v })
 - (p_{n+1},\mathrm{div}\, {\bf v })
 = 0.
\end{align}

By subtracting \eqref{eq:cont-average-mom} from \eqref{eq:disc-mom-general}
and using ${\bf e }_n={\bf y}(t_n)-{\bf y}_n$, ${\bf e }^{n+\frac12}=\tfrac12({\bf e }_{n+1}+{\bf e }_n)$, we
obtain the error identity
\begin{align}\label{eq:err-press-raw}
\Big(\frac{{\bf e }_{n+1}-{\bf e }_n}{\tau},{\bf v }\Big)
&+ \nu\,(\nabla {\bf e }^{n+\frac12},\nabla {\bf v })
+ \mathcal C\big({\bf y}_\star^{n+\frac12}+\Phi\mathcal I_n^W,\,
                 {\bf e }^{n+\frac12},{\bf v }\Big)\notag\\
&\quad + \mathcal C({\bf e }_\star^{\,n+\frac12},\bar {{\bf Y}}^{n+\frac12},{\bf v })\notag\\
&\quad - \Big(\frac{1}{\tau}\int_{t_n}^{t_{n+1}}p(t)\,\mathrm{d}t-p_{n+1},\mathrm{div}\, {\bf v }\Big)\notag\\
&= \big\langle\widetilde{\mathcal R}_1^{n+\frac12},{\bf v }\Big\rangle,
\end{align}
where $\widetilde{\mathcal R}_1^{n+\frac12}$ is the second modified residual
from \eqref{20}, and
\[
{\bf e }_\star^{\,n+\frac12}
:=\tfrac32 {\bf e }_n-\tfrac12 {\bf e }_{n-1},\qquad
\bar {{\bf Y}}^{n+\frac12}=\frac{1}{\tau}\int_{t_n}^{t_{n+1}}{\bf {Y}}(t)\,\mathrm{d}t.
\]
Rearranging \eqref{eq:err-press-raw} gives, for all
${\bf v}\in\mathbb H^1({\mathbb{T}^2})^2$,
\begin{align}\label{eq:pi-v-div}
(\pi_{n+1},\mathrm{div}\, {\bf v })
&= \Big(\frac{{\bf e }_{n+1}-{\bf e }_n}{\tau},{\bf v }\Big)
 + \nu\,(\nabla {\bf e }^{n+\frac12},\nabla {\bf v })\notag\\
&\quad + \mathcal C\big({\bf y}_\star^{n+\frac12}+\Phi\mathcal I_n^W,\,
                        {\bf e }^{n+\frac12},{\bf v }\Big)
       + \mathcal C({\bf e }_\star^{\,n+\frac12},\bar {{\bf Y}}^{n+\frac12},{\bf v })\notag\\
&\quad - \big\langle\widetilde{\mathcal R}_1^{n+\frac12},{\bf v }\Big\rangle.
\end{align}

\medskip\noindent
\emph{Step 3: pathwise bound of the right-hand side.}
We now bound each term on the right-hand side of \eqref{eq:pi-v-div} on
$\widetilde{\Omega}_R$, keeping track of $\|\nabla {\bf v}\|_{\mathbb{L}^2(\mt)^4}$.

\smallskip\noindent
$\bullet$ Time derivative term:
by Cauchy--Schwarz and the definition of the $\mathbb V^{-1}$ norm, since $\mathrm{div}({\bf e }_{n+1}-{\bf e }_n)=0$ and by continuity of $\nabla\Delta^{-1}\mathrm{div}$
\begin{align*}
\Big|\Big(\frac{{\bf e }_{n+1}-{\bf e }_n}{\tau},{\bf v }\Big)\Big|&=\Big|\Big(\frac{{\bf e }_{n+1}-{\bf e }_n}{\tau},{\bf v }-\nabla\Delta^{-1}\mathrm{div}{\bf v}\Big)\Big|\\
&\le \frac{\|{\bf e }_{n+1}-{\bf e }_n\|_{\mathbb V^{-1}}}{\tau}\,
     \|\nabla ({\bf v}-\nabla\Delta^{-1}\mathrm{div}\,{\bf v})\|_{\mathbb{L}^2(\mt)^4}\\
&\le\,C \frac{\|{\bf e }_{n+1}-{\bf e }_n\|_{\mathbb V^{-1}}}{\tau}\,
     \|\nabla {\bf v}\|_{\mathbb{L}^2(\mt)^4}.
\end{align*}

\smallskip\noindent
$\bullet$ Viscous term: we have
\[
|\nu(\nabla {\bf e }^{n+\frac12},\nabla {\bf v })|
\le \nu\,\|\nabla {\bf e }^{n+\frac12}\|_{\mathbb{L}^2(\mt)^4}\,\|\nabla {\bf v}\|_{\mathbb{L}^2(\mt)^2}.
\]

\smallskip\noindent
$\bullet$ Transport terms:
by using the continuity estimate \eqref{eq:C-cont} and the localization
bound \eqref{eq:UR-H1-bound} for $U$ on $\widetilde{\Omega}_R$,
\begin{align*}
\big|\mathcal C({\bf y}_\star^{n+\frac12}+\Phi\mathcal I_n^W,{\bf e }^{n+\frac12},{\bf v })\big|
&\le C\,\|{\bf y}_\star^{n+\frac12}+\Phi\mathcal I_n^W\|_{\mathbb{V}}\,
       \|{\bf e }^{n+\frac12}\|_{\mathbb{V}}\,
       \|\nabla {\bf v}\|_{\mathbb{L}^2(\mt)^4}
\end{align*}
and similarly we get
\[
|\mathcal C({\bf e }_\star^{\,n+\frac12},\bar {{\bf Y}}^{n+\frac12},{\bf v })|
\le C\,R\,\|{\bf e }_\star^{\,n+\frac12}\|_{\mathbb{L}^2(\mt)^2}\,\|\nabla {\bf v}\|_{\mathbb{L}^2(\mt)^4}.
\]

\smallskip\noindent
$\bullet$ Residual term: we have
\[
\big|\big\langle\widetilde{\mathcal R}_1^{n+\frac12},{\bf v }\Big\rangle\big|
\le \|\widetilde{\mathcal R}_1^{n+\frac12}\|_{\mathbb H^{-1}({\mathbb{T}^2})}\,
    \|\nabla {\bf v}\|_{\mathbb{L}^2(\mt)^4}.
\]

\smallskip\noindent
By collecting all contributions, we obtain the pathwise inequality
\begin{align}\label{eq:pi-v-bound}
\frac{|(\pi_{n+1},\mathrm{div}\,{\bf v })|}{\|\nabla {\bf v}\|_{\mathbb{L}^2(\mt)^4}}
&\le C\,R\bigg(
   \frac{\|{\bf e }_{n+1}-{\bf e }_n\|_{\mathbb H^{-1}}}{\tau}
 + \|\nabla {\bf e }^{n+\frac12}\|_{\mathbb{L}^2(\mt)^4}
 +\|{\bf y}_\star^{n+\frac12}+\Phi\mathcal I_n^W\|_{\mathbb V} \|{\bf e }^{n+\frac12}\|_{\mathbb{V}}\notag
 \\&\qquad\qquad+ \|{\bf e }_\star^{\,n+\frac12}\|_{\mathbb{L}^2(\mt)^2}
 + \|\widetilde{\mathcal R}_1^{n+\frac12}\|_{\mathbb H^{-1}}\bigg),
\end{align}
for all ${\bf v}\in\mathbb H^1({\mathbb{T}^2})^2$ and all $\omega\in\widetilde{\Omega}_R$.

\medskip\noindent
\emph{Step 4: inf--sup and expected localized bound.}
By taking the supremum in \eqref{eq:pi-v-bound} over all nonzero
$v\in\mathbb H^1({\mathbb{T}^2})^2$ and using
\eqref{eq:pi-infsup-pathwise}, we obtain on $\widetilde{\Omega}_R$,
\begin{align*}
\|\pi_{n+1}\|_{\mathbb L^2({\mathbb{T}^2})}
&\le C\,R\Big(
   \frac{\|{\bf e }_{n+1}-{\bf e }_n\|_{\mathbb H^{-1}}}{\tau}
 + \|\nabla {\bf e }^{n+\frac12}\|_{\mathbb{L}^2(\mt)^4}
 +\|{\bf y}_\star^{n+\frac12}+\Phi\mathcal I_n^W\|_{\mathbb V} \|{\bf e }^{n+\frac12}\|_{\mathbb{V}}\\&\qquad
 + \|{\bf e }_\star^{\,n+\frac12}\|_{\mathbb{L}^2}
 + \|\widetilde{\mathcal R}_1^{n+\frac12}\|_{\mathbb H^{-1}}\Big).
\end{align*}
By multiplying by $\mathcal B_R$, squaring, and using
$(x_1+\dots+x_k)^2\le k(x_1^2+\dots+x_k^2)$, we find
\begin{align}\label{eq:pi-local-square}
\mathcal B_R\|\pi_{n+1}\|_{\mathbb{L}^2(\mt)}^2
&\le C\,R^2\mathcal B_R\Big(
   \frac{\|{\bf e }_{n+1}-{\bf e }_n\|_{\mathbb H^{-1}}^2}{\tau^2}
 + \|\nabla {\bf e }^{n+\frac12}\|_{\mathbb{L}^2(\mt)^4}^2
 + \|{\bf y}_\star^{n+\frac12}+\Phi\mathcal I_n^W\|_{\mathbb V}^2\|{\bf e }^{n+\frac12}\|_{\mathbb{V}}^2\notag\\
&\hspace{3cm}
 + \|{\bf e }_\star^{\,n+\frac12}\|_{\mathbb{L}^2(\mt)^2}^2
 + \|\widetilde{\mathcal R}_1^{n+\frac12}\|_{\mathbb H^{-1}}^2\Big).
\end{align}

We now take expectations and sum over $n$. The velocity error
estimates from Theorem~\ref{thm:main-local} give
\[
\max_{0\le n\le N}
\mathbb E\big[\mathcal{B}_R\|{\bf e }_n\|_{\mathbb{L}^2(\mt)^2}^2\big]\le {\bf e }^{C\,R^2}\tau^3,
\qquad
\tau\sum_{n=0}^{N-1}
\mathbb E\big[\mathcal{B}_R\|\nabla {\bf e }^{n+\frac12}\|_{\mathbb{L}^2(\mt)^4}^2\big]
\le {\bf e }^{C\,R^2}\tau^3.
\]
From these and standard estimates for the discrete time derivative and
BDF2 extrapolation (by using that
${\bf e }_\star^{\,n+\frac12}=\tfrac32 {\bf e }_n-\tfrac12 {\bf e }_{n-1}$), together with
Lemma~\ref{lem:dtd-H-1} and the residual estimate \eqref{20}, we obtain
\[
\tau\sum_{n=0}^{N-1}
\mathbb E\Big[\mathcal{B}_R\frac{\|{\bf e }_{n+1}-{\bf e }_n\|_{\mathbb H^{-1}}^2}{\tau^2}\Big]
\le {\bf e }^{C\,R^2}\tau^3,
\qquad
\tau\sum_{n=0}^{N-1}
\mathbb E\big[\mathcal{B}_R\|{\bf e }_\star^{\,n+\frac12}\|_{\mathbb{L}^2(\mt)^2}^2\big]
\le {\bf e }^{C\,R^2}\,\tau^3.
\]
By combining these bounds with \eqref{new} and summing \eqref{eq:pi-local-square} over
$n=0,\dots,N-1$, we obtain
\[
\tau\sum_{n=0}^{N-1}
\mathbb E\big[\mathcal{B}_R\|\pi_{n+1}\|_{\mathbb L^2(\mt)}^2\big]
\le {\bf e }^{C\,R^2}\tau^3,
\]
which is exactly \eqref{eq:pressure-rate}. This completes the proof.
\end{proof}
\subsection{In case of general noise coefficient}\label{section_general_noise}
For any vector field
$\mathbf v\in \mathbb{L}^{2}(\mathbb{T}^2;\mathbb{R}^2)$, we define the mean-zero scalar potential operator
\[
Q_{\mathrm{HL}}(\mathbf v):=\Delta^{-1}\operatorname{div}\mathbf v \in \mathbb{L}^{2}_{0}(\mathbb{T}^2; \mathbb{R}^2),
\]
where $\Delta^{-1}$ is the inverse Laplacian on mean-zero functions, i.e.,
$\phi=\Delta^{-1}f$ is the unique solution of
\[
-\Delta \phi=f \quad \text{in }\mathbb{T}^2,
\qquad \int_{\mathbb{T}^2}\phi\dx=0.
\]
The Helmholtz--Leray projection is then given by
\[
P_{\mathrm{HL}}\mathbf v := \mathbf v - \nabla Q_{\mathrm{HL}}(\mathbf v).
\]
Hence the Helmholtz decomposition reads
\begin{equation}\label{eq:HD-torus}
	\mathbf v = P_{\mathrm{HL}}\mathbf v + \nabla Q_{\mathrm{HL}}(\mathbf v),
	\tag{HD}
\end{equation}
where $\operatorname{div}(P_{\mathrm{HL}}\mathbf v)=0$ and
$\int_{\mathbb{T}^2} Q_{\mathrm{HL}}(\mathbf v)\dx=0$.

\medskip
\noindent
Let $\Phi$ be the (possibly non-divergence-free) noise coefficient and define
\[
\Psi := P_{\mathrm{HL}}\Phi, 
\qquad 
\Theta := Q_{\mathrm{HL}}\Phi,
\]
so that
\begin{equation}\label{eq:Phi-split}
	\Phi = \Psi + \nabla \Theta .
\end{equation}
Introduce the random transformation
\[
\hat{\mathbf y}(t):=\mathbf u(t)-P_{\mathrm{HL}}\Phi\,W(t)
=\mathbf u(t)-\Psi\,W(t).
\]
Then \eqref{req:stochastic-NS} can be rewritten as the following random PDE system: for each time step
$(t_n,t_{n+1}]$,
\begin{equation}\label{eq:random-NS1}
	\begin{cases}
		\hat{\mathbf y}(t_{n+1})-\hat{\mathbf y}(t_n)
		+\displaystyle\int_{t_n}^{t_{n+1}}
		\Big[
		\Big((\hat{\mathbf y}+\Psi W)\cdot\nabla\Big)\Big(\hat{\mathbf y}+\Psi W)\Big)
		-\nu\,\Delta(\hat{\mathbf y}+\Psi W)
		\Big]\dt
		+\displaystyle\int_{t_n}^{t_{n+1}}\nabla q(t)\,dt
		=0,\\[1.0ex]
		\operatorname{div}\hat{\mathbf y}(t)=0,\\[0.5ex]
		\hat{\mathbf y}(0)=\mathbf y_0:=\mathbf u_0,
	\end{cases}
\end{equation}
where the modified pressure is defined by
\[
q(t):=p(t)+\frac{1}{\tau}\,\Theta\,\Delta_{n+1}W,
\qquad t\in(t_n,t_{n+1}],
\]
with $\tau:=t_{n+1}-t_n$.

\medskip
\noindent
Since $\hat{\mathbf y}$ is divergence-free, it satisfies the same type of regularity estimates
. In particular, these estimates are sufficient to carry out
the convergence analysis in the proofs of Theorems~\ref{thm:main-local} and~\ref{lem:pressure-local}.

\medskip
\noindent
For the time discretisation of \eqref{eq:random-NS1}, we again introduce a modified discrete pressure
$q^{n+1}$ on $(t_n,t_{n+1}]$ defined by
\[
q^{n+1}:=p^{n+1}+\frac{1}{\tau}\,\Theta\,\Delta_{n+1}W.
\]
Then Theorem~\ref{lem:pressure-local} yields a rate for the quantity
\[
\frac{1}{\tau}\int_{t_n}^{t_{n+1}} q(t)\dt - q^{n+1}.
\]
By the definitions of $q(t)$ and $q^{n+1}$, the noise-induced terms cancel, and we obtain
\[
\frac{1}{\tau}\int_{t_n}^{t_{n+1}} q(t)\dt - q^{n+1}
=
\frac{1}{\tau}\int_{t_n}^{t_{n+1}} p(t)\dt - p^{n+1}.
\]
Therefore, this term is precisely the difference between the mean value of the analytical pressure
over $(t_n,t_{n+1}]$ and the discrete pressure at time $t_{n+1}$.

\medskip
\noindent
Consequently, by proceeding in this way for a non-divergence-free noise coefficient, one can prove
the same type of results as stated in Theorems~\ref{thm:main-local} and~\ref{lem:pressure-local} for
the divergence-free noise case.

\section{Linear stochastic Stokes system with additive noise}
\label{sec:stokes}

For comparison we briefly discuss the linear incompressible Stokes system
with additive noise. In this case there is no convective nonlinearity,
and the error analysis becomes significantly simpler: in particular, no
localization in probability is needed and all constants in the error
bounds are free from large constant $R$.

\subsection{Model problem} We consider the following Linear Stokes system with additive noise
\begin{equation}\label{eq:stokes-spde}
\begin{aligned}
  \begin{cases}
{\rm d}{\bf u}(t) 
+ \big[\nabla p(t)- \nu\,\Delta {\bf u}(t)\big]\,{\rm d}t = \Phi\,{\rm d}W(t),
& (x,t)\in \mt\times (0,T],\\[0.5ex]
\diver {\bf u}(t) = 0,
& (x,t)\in\mt\times(0,T],\\[0.5ex]
{\bf u}(0)={\bf u}_0,
& x\in\mt,
\end{cases}
\end{aligned}
\end{equation}
with the same coefficient field $\Phi$ and Wiener process $W$ as in
the previous section. 
The time--discretisation for \eqref{eq:stokes-spde} is obtained from
our Crank--Nicolson scheme \eqref{scheme:main} by simply dropping all convective and
noise--correction terms.

\subsection{Main error bounds}

The higher--order Brownian quadrature on the fine mesh (Section~\ref{sec: time-discretization})
still improves the consistency error of the random diffusive term and
leads to a strong convergence rate of order $3/2$ in time, now \emph{without}
localization and without any dependence on a truncation parameter $R$.

\begin{theorem}[Velocity error for the linear Stokes system]
\label{thm:stokes-vel}
Let $({\bf u}(t),p(t))$ solve \eqref{eq:stokes-spde} and let
$\{({\bf u}_n,p_n)\}_{n=0}^N$ be the corresponding Crank--Nicolson
approximations obtained from \eqref{scheme:main} by removing all
convective and noise--correction terms. Then there exists a constant
$C>0$, independent of $\tau$ and $N$, such that
\begin{equation}\label{eq:stokes-vel-L2}
  \max_{0\le n\le N}
  \mathbb{E}\bigl[\|{\bf u}(t_n)-{\bf u}_n\|_{\mathbb{L}^2(\mt)^2}^2\bigr]
  \;\le\; C\,\tau^{3},
\end{equation}
and
\begin{equation}\label{eq:stokes-vel-H1}
  \nu\,\tau\sum_{n=0}^{N-1}
  \mathbb{E}\bigl[\|\nabla({\bf u}(t_{n+\frac12})-{\bf u}^{n+\frac12})\|_{\mathbb{L}^2(\mt)^4}^2\bigr]
  \;\le\; C\,\tau^{3},
\end{equation}
where
\[
  {\bf u}\big(t_{n+\frac12}\big) := \tfrac12\bigl({\bf u}(t_n)+{\bf u}(t_{n+1})\bigr),
  \qquad
  {\bf u}^{n+\frac12} := \tfrac12\bigl({\bf u}_{n}+{\bf u}_{n+1}\bigr).
\]
\end{theorem}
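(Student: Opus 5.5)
We follow the strategy of Theorem~\ref{thm:main-local} for the transformed variable ${\bf y}={\bf u}-\Phi W$, now without the convective terms. Consequently no localization $\mathcal A_R$ and no Gronwall argument in the nonlinear sense are needed. The random Stokes problem reads $\partial_t{\bf y}-\nu\Delta({\bf y}+\Phi W)+\nabla p=0$, $\diver{\bf y}=0$. The scheme is
\[
\Big(\frac{{\bf y}_{n+1}-{\bf y}_n}{\tau},\boldsymbol{\varphi}\Big)+\nu\big(\nabla({\bf y}^{n+\frac12}+\Phi\mathcal I_n^W),\nabla\boldsymbol{\varphi}\big)-(p_{n+1},\diver\boldsymbol{\varphi})=0 ,
\]
which is equivalent to the reduced form of \eqref{scheme:main} via ${\bf u}_n={\bf y}_n+\Phi W(t_n)$. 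Since ${\bf u}(t_n)-{\bf u}_n={\bf e}_n:={\bf y}(t_n)-{\bf y}_n$, and the midpoint errors also coincide, it suffices to bound ${\bf e}_n$ in $\mathbb L^2$ and ${\bf e}^{n+\frac12}$ in the discrete $\mathbb L^2(\mathbb V)$ norm.

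\emph{Step 1 (error identity).} Integrate the continuous equation over $I_n$, test with $\boldsymbol{\varphi}\in\mathbb V$ so that the pressure drops, and subtract the scheme. This gives
\[
\Big(\frac{{\bf e}_{n+1}-{\bf e}_n}{\tau},\boldsymbol{\varphi}\Big)+\nu(\nabla{\bf e}^{n+\frac12},\nabla\boldsymbol{\varphi})=-\nu\big(A_n,\nabla\boldsymbol{\varphi}\big),
\]
where
\[
A_n=\frac1\tau\int_{t_n}^{t_{n+1}}\nabla{\bf y}(t)\,\dt-\nabla\bar{\bf y}^{n+\frac12}+\nabla\Phi(\mathcal Q_n^W-\mathcal I_n^W)
\]
is exactly the residual $\mathcal T_{7,R,n}$ from Lemma~\ref{lem:R-bound}. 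None of the terms $\mathcal T_{1},\dots,\mathcal T_6$ appears.

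\emph{Step 2 (consistency).} We bound $\E\|A_n\|_{\mathbb L^2}^2\le C\tau^3$. For the deterministic part, the trapezoidal rule error is
\[
\frac1\tau\int_{t_n}^{t_{n+1}}\nabla{\bf y}\,\dt-\nabla\bar{\bf y}^{n+\frac12}=\frac1{\tau}\int_{t_n}^{t_{n+1}}\Big(t-t_n-\frac\tau2\Big)\Big(\partial_t\nabla{\bf y}(t)-\partial_t\nabla{\bf y}\big(t_{n+\frac12}\big)\Big)\dt .
\]
This uses that the kernel has zero mean, so a constant may be subtracted, and then the $C^{1/2}([0,T];\mathbb L^2(\Omega;\mathbb V))$ regularity of $\partial_t{\bf y}$. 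For the linear Stokes problem this regularity follows from Lemma~\ref{lem:regadditive}(c)-type arguments without any $R$; equivalently, Lemma~\ref{lem:deltastar-average} applies. This part is $\mathcal O(\tau^{3/2})$ in $\mathbb L^2(\Omega)$. For the stochastic part, \eqref{eq:QWvsIW} together with $\Phi\in L_2(\mathfrak U;\mathbb H^2)$ gives $\E\|\nabla\Phi(\mathcal Q_n^W-\mathcal I_n^W)\|^2\le C\tau^3$.

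\emph{Step 3 (energy and summation).} Test with $\boldsymbol{\varphi}={\bf e}^{n+\frac12}$, use $({\bf e}_{n+1}-{\bf e}_n,{\bf e}^{n+\frac12})=\frac12(\|{\bf e}_{n+1}\|^2-\|{\bf e}_n\|^2)$, and apply Young's inequality to obtain
\[
\frac1{2\tau}\big(\|{\bf e}_{n+1}\|^2-\|{\bf e}_n\|^2\big)+\frac\nu2\|\nabla{\bf e}^{n+\frac12}\|^2\le\frac\nu2\|A_n\|^2 .
\]
Take expectations, multiply by $2\tau$, and sum telescopically from $n=0$ with ${\bf e}_0=0$. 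This yields \eqref{eq:stokes-vel-L2} and \eqref{eq:stokes-vel-H1} directly with $C\tau\cdot N\cdot\tau^3=CT\tau^3$, without Gronwall. Note also that there is no extrapolated term, so the $n=0$ loss $\tau^2\mathbf 1_{\{n=0\}}$ from Lemma~\ref{lem:R-bound} disappears.

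\emph{Main obstacle.} The only delicate point is Step 2: extracting $\tau^{3/2}$ from the trapezoidal error with merely H\"older-$\frac12$ regularity of $\partial_t\nabla{\bf y}$. This is handled by the zero-mean kernel trick above, which loses no power of $\tau$ relative to the H\"older modulus.
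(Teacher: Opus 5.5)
Your proposal is correct and follows the structure the paper indicates for this omitted proof. The consistency residual collapses to the diffusive term $\mathcal T_{7,R,n}$, testing with ${\bf e}^{n+\frac12}$ gives the discrete energy inequality, and because no error terms remain on the right-hand side a plain telescoping sum replaces the paper's Gronwall step. (The Peano-kernel representation of the trapezoidal defect should carry a minus sign, $-\frac1\tau\int_{t_n}^{t_{n+1}}(t-t_n-\frac\tau2)(\cdots)\,\mathrm{d}t$, which does not affect the bound.)
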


For the pressure we obtain a matching bound, in the same spirit as
Theorem~\ref{lem:pressure-local}, but now without localization.

\begin{theorem}[Pressure error for the linear Stokes system]
\label{thm:stokes-press}
Assume in addition the continuous inf--sup condition
\eqref{eq:inf-sup-cont} for the pair $(V,Q)$. Define the pressure error
at level $t_{n+1}$ by
\[
  \pi_{n+1}
  := \frac{1}{\tau}\int_{t_n}^{t_{n+1}} p(t)\,\mathrm{d}t - p_{n+1}
  \in Q.
\]
Then there exists a constant $C>0$, independent of $\tau$ and $N$, such
that
\begin{equation}\label{eq:stokes-press-rate}
  \tau\sum_{n=0}^{N-1}
    \mathbb{E}\bigl[\|\pi_{n+1}\|_{\mathbb{L}^2(\mt)}^2\bigr]
  \;\le\; C\,\tau^{3}.
\end{equation}
\end{theorem}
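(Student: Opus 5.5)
We follow the proof of Theorem~\ref{lem:pressure-local}, which simplifies considerably because there is no convection. Transforming with ${\bf y}={\bf u}-\Phi W$, the scheme in terms of ${\bf y}_n={\bf u}_n-\Phi W(t_n)$ reads
\[
\Big(\frac{{\bf y}_{n+1}-{\bf y}_n}{\tau},{\bf v}\Big)+\nu\big(\nabla({\bf y}^{n+\frac12}+\Phi\mathcal I_n^W),\nabla{\bf v}\big)-(p_{n+1},\mathrm{div}\,{\bf v})=0
\qquad\forall\,{\bf v}\in\mathbb H^1(\mt)^2 .
\]
We integrate the continuous random Stokes system over $I_n$, test with an arbitrary ${\bf v}\in\mathbb H^1(\mt)^2$ (not necessarily solenoidal) and subtract. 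With ${\bf e}_n={\bf y}(t_n)-{\bf y}_n$ this gives the error identity
\[
(\pi_{n+1},\mathrm{div}\,{\bf v})=\Big(\frac{{\bf e}_{n+1}-{\bf e}_n}{\tau},{\bf v}\Big)+\nu(\nabla{\bf e}^{n+\frac12},\nabla{\bf v})-\langle\mathcal R_S^{n+\frac12},{\bf v}\rangle,
\]
where $\langle\mathcal R_S^{n+\frac12},{\bf v}\rangle=\nu\frac1\tau\int_{t_n}^{t_{n+1}}(\nabla{\bf Y}(t)-\nabla{\bf Y}^{n+\frac12},\nabla{\bf v})\,\mathrm dt$. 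This residual is exactly the term $\mathcal T_{7,R,n}$ of Lemma~\ref{lem:R-bound}. We estimate it without localization: we apply the trapezoidal-rule estimate of Lemma~\ref{lem:deltastar-average} using $\partial_t{\bf y}\in C^{1/2}([0,T];\mathbb L^2(\Omega;\mathbb V))$, and the Brownian quadrature bound \eqref{eq:QWvsIW} for $\Phi(\mathcal Q_n^W-\mathcal I_n^W)$. For the Stokes problem the required regularity follows from linear theory, with no $R$. This yields $\mathbb E\|\mathcal R_S^{n+\frac12}\|_{\mathbb H^{-1}}^2\le C\tau^3$ for all $n$. There is no $n=0$ defect, since no extrapolation enters.

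Next we bound the discrete time derivative in $\mathbb V^{-1}$, as in Lemma~\ref{lem:dtd-H-1}. Restricting the identity to ${\bf v}\in\mathbb V$ removes the pressure and gives
\[
\Big\|\frac{{\bf e}_{n+1}-{\bf e}_n}{\tau}\Big\|_{\mathbb V^{-1}}\le\nu\|\nabla{\bf e}^{n+\frac12}\|_{\mathbb L^2}+\|\mathcal R_S^{n+\frac12}\|_{\mathbb H^{-1}}.
\]
We square, multiply by $\tau$, sum and take expectations. Theorem~\ref{thm:stokes-vel}, estimate \eqref{eq:stokes-vel-H1}, written for ${\bf e}$ (which is the same quantity since the $\Phi W$ parts cancel), together with the residual bound gives $\tau\sum_n\mathbb E\|({\bf e}_{n+1}-{\bf e}_n)/\tau\|_{\mathbb V^{-1}}^2\le C\tau^3$.

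Finally we return to general ${\bf v}\in\mathbb H^1(\mt)^2$. Since $\mathrm{div}({\bf e}_{n+1}-{\bf e}_n)=0$, we replace ${\bf v}$ by its Leray projection ${\bf v}-\nabla\Delta^{-1}\mathrm{div}\,{\bf v}$ in the time-derivative term; this projection is $\mathbb H^1$-stable on the torus. Using the inf--sup condition \eqref{eq:inf-sup-cont} we then get the pathwise bound
\[
\beta\|\pi_{n+1}\|_{\mathbb L^2}\le C\Big(\tau^{-1}\|{\bf e}_{n+1}-{\bf e}_n\|_{\mathbb V^{-1}}+\|\nabla{\bf e}^{n+\frac12}\|_{\mathbb L^2}+\|\mathcal R_S^{n+\frac12}\|_{\mathbb H^{-1}}\Big).
\]
Squaring, multiplying by $\tau$, summing and taking expectations gives \eqref{eq:stokes-press-rate}.

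The main obstacle is the residual bound at order $\tau^3$ in mean square without localization, that is, the trapezoidal error for $\nabla{\bf y}$. This requires $\mathbb E$-moments of the Hölder seminorm of $\partial_t\nabla{\bf y}$. For the linear Stokes system we verify them directly: $\partial_t{\bf y}=\nu\Delta({\bf y}+\Phi W)$, and its $C^{1/2}$ continuity in $\mathbb L^2(\Omega;\mathbb V)$ follows from $\Phi\in L_2(\mathfrak U;\mathbb H^4)$ and the smoothing of the Stokes semigroup, with no nonlinear moment growth. This is why the constant $C$ is free of $R$.
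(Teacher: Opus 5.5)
Your proposal is correct and follows the route the paper indicates for this omitted proof: the argument of Lemma~\ref{lem:dtd-H-1} and Theorem~\ref{lem:pressure-local} with the convective terms, the $\mathbf{1}_{\{n=0\}}$ extrapolation defect and the localization removed, so that the only residual is $\mathcal T_{7,R,n}$ and the velocity input is \eqref{eq:stokes-vel-H1}. Two small remarks. First, for solenoidal $\Phi$ on $\mathbb T^2$, testing with $\nabla q$ shows that both $p$ and $p_{n+1}$ vanish identically, so in fact $\pi_{n+1}\equiv 0$. Second, what your residual bound really uses is mean-square $C^{1/2}$-continuity of $\partial_t{\bf y}$ in $\mathbb L^2(\Omega;\mathbb V)$, which also needs regular initial data such as ${\bf u}_0\in\mathbb H^4$; it does not use moments of the pathwise H\"older seminorm, which are in general infinite because of the noise.
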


\begin{remark}
The proofs of Theorems~\ref{thm:stokes-vel} and~\ref{thm:stokes-press}
follow the same structure as in Section~\ref{theo-2}: definition
of a consistency residual, derivation of a discrete energy identity, and
a Gronwall argument for the velocity, followed by an inf--sup argument
for the pressure. However, in the linear Stokes case all convective
terms are absent, no It\^o--type noise correction $I_n^{W^2}$ is needed, and no
localization in probability is required. As a consequence, the arguments
are shorter and all constants $C$ in
\eqref{eq:stokes-vel-L2}--\eqref{eq:stokes-press-rate} are deterministic.
We therefore omit the details.
\end{remark}

\section{Numerical experiments}\label{comstudies2}

We approximate the solution of the random PDE~\eqref{eq:random-NS} using a fully discrete variant of scheme \eqref{scheme:main0}.
For the spatial discretisation we employ the lowest order Taylor-Hood finite element approximation, {\em i.e.}, we use continuous piecewise quadratic approximation for the velocity (discrete space $\mathbb{V}_h$) and
continuous piecewise linear approximation for the pressure (discrete space $\mathbb{Q}_h$).
In all experiments below we consider the spatial domain to be a unit square $\mathcal{D} = (0,1)^2$.
The finite element approximation is considered over a uniform triangulation of $\mathcal{D}$ with mesh size $h=1/L$ which is constructed as follows:
the unit square is divided into $L\times L$ squares with side $h$ and each square is subdivided into four equally shaped triangles with barycenter of the square as a common vertex.

The fully-discrete finite element approximation of \eqref{eq:random-NS} takes the form:
for $n=0, \dots, N-1$, given ${\bf y}_n,{\bf y}_{n-1}\in\mathbb{V}_h$ (with ${\bf y}_{-1}={\bf y}_0$), find
$({\bf y}_h^{n+1},p_h^{n+1})\in\mathbb{V}_h\times\mathbb{Q}_h$ such that, for all
$(\boldsymbol{\varphi},q)\in\mathbb{V}_h\times\mathbb{Q}_h$,
\begin{align}\label{scheme:cn_fem}
\begin{cases}
\displaystyle
\Big(\frac{{\bf y}_h^{n+1}-{\bf y}_h^n}{\tau},\boldsymbol{\varphi}\Big)
+ \mathcal C\!\big({\bf y}_{h,\star}^{n+\frac12}+\Phi\mathcal I_n^W,\;
                   {\bf y}_h^{n+\frac12}+\Phi\mathcal I_n^W,\;
                   \boldsymbol{\varphi}\big)
 -\big(\mathcal{I}_n^{W^2},\nabla\boldsymbol{\varphi}\big)
\\[1ex]\qquad\qquad
+ \nu\,\big(\nabla({\bf y}_h^{n+\frac12}+\Phi\mathcal I_n^W),\nabla\boldsymbol{\varphi}\big)
 - (p_h^{n+1},\mathrm{div}\,\boldsymbol{\varphi})
 =\bigg(\displaystyle\frac{1}{\tau}\int_{t_n}^{t_{n+1}}f(s)\mathrm{d}s,\boldsymbol{\varphi}\bigg),\\[1ex]
(\mathrm{div}\, {\bf y}_h^{n+1},q)=0.
\end{cases}
\end{align}
Below we refer to the above scheme as CN.

For comparison we also consider semi-implicit backward Euler scheme:
for $n=0, \dots, N-1$, given ${\bf y}_n,{\bf y}_{n-1}\in\mathbb{V}_h$, find
$({\bf y}_h^{n+1},p_h^{n+1})\in\mathbb{V}_h\times\mathbb{Q}_h$ such that, for all
$(\boldsymbol{\varphi},q)\in\mathbb{V}_h\times\mathbb{Q}_h$,
\begin{align}\label{scheme:euler_fem}
\begin{cases}
\displaystyle
\Big(\frac{{\bf y}_h^{n+1}-{\bf y}_h^n}{\tau},\boldsymbol{\varphi}\Big) +  \mathcal C^*\!\big({\bf U}_{h}^{n},\;  {\bf y}_h^{n+1}+\Phi W^{n+1},\; \boldsymbol{\varphi}\big)
\\[1ex]\qquad\qquad\qquad
+ \nu\,\big(\nabla({\bf y}_h^{n+1}+\Phi W^{n+1}),\nabla\boldsymbol{\varphi}\big)
 - (p_h^{n+1},\mathrm{div}\,\boldsymbol{\varphi})
 =(f(t_{n+1}),\boldsymbol{\varphi}),\\[1ex]
(\mathrm{div}\, {\bf y}_h^{n+1},q)=0,
\end{cases}
\end{align}
where $W^n := W(t_{n})$, and
$$
\mathcal{C}^*\!\big({\bf u},\;  {\bf v},\; \boldsymbol{\varphi}\big) = \frac{1}{2}\left( \mathcal C\!\big({\bf u},\;  {\bf v},\; \boldsymbol{\varphi}\big)
 - \mathcal C\!\big({\bf v},\;  {\bf u},\; \boldsymbol{\varphi}\big)    \right),
$$
is an approximation of the convective term $\mathcal C(\cdot,\cdot,\cdot)$ (which is equivalent for divergence-free functions) that ensures that the fully discrete scheme satisfies an energy law.

We consider three variants of the Euler scheme (\ref{scheme:euler_fem}) with ${\bf U}_{h}^{n} = {\bf y}_h^{n}+\Phi W^{n}$, ${\bf U}_{h}^{n} = {\bf y}_h^{n}+\Phi W^{n+1}$,
and ${\bf U}_{h}^{n} = {\bf y}_h^{n,*}+\Phi W^{n+1}$, respectively,
where ${\bf y}_h^{n,*}\approx {\bf y}_h^{n+1}$ is obtained by one fixed-point iteration.
Below we refer to these schemes as SI, SIS and IE1, respectively.
We remark that the above SI scheme for the RPDE \eqref{eq:random-NS} is equivalent to the `standard' semi-implicit Euler scheme for the SPDE (\ref{eq:stochastic-NS}); see for instance \cite{BP1}.

\begin{remark}\label{remark_aufwand}
The variant SI of the semi-implicit Euler scheme (\ref{scheme:euler_fem}) has been considered in \cite{BP1} where it is shown that the scheme satisfies a discrete energy law
and exhibits first order convergence with respect to the time step $\tau$.
The semi-implicit Euler scheme requires the solution of one linear system of equations per time-level.

Compared to the semi-implicit Euler-type schemes the CN scheme (\ref{scheme:cn_fem}) has an improved order of convergence $3/2$ with respect to $\tau$.
In addition to the solution of a linear system of equations (with similar computational cost as in the case of the Euler scheme),
at each time-level the CN scheme requires the computation of the stochastic integrals $\mathcal{I}_n^W$, $\mathcal{I}_n^{W^2}$ over a micro-grid with step-size $\tau^2$.
At each time level the computational cost associated with the computation of these integrals is proportional to $M\,K^2$,
with $M=1/\tau$, and $K$ the number of Brownian motions $\{W_k\}_{k=1}^{K}$ (where the quadratic cost $K^2$ is associated with the computation of the integral $\mathcal{I}_n^{W^2}$).
 In the lid-driven cavity flow example where we use $K=4$ and $M=100$
the differences in the computational times were negligible.
In computations which require noise with a large number of modes along with very small time steps,
the additional computations cost of the CN scheme may become more obvious, nevertheless it can be still compensated by the improved accuracy of the CN scheme.

Finally, we also mention that the CN scheme only satisfies the discrete energy law approximately, nevertheless the computational results below indicate
that it enjoys similar stability properties as the semi-implicit Euler scheme, even in the presence of strong noise.
\end{remark}


\subsection{Academic example}\label{academic_example}

We demonstrate the convergence order for the Crank-Nicolson scheme \eqref{scheme:cn_fem} using an academic example with exact solution ${\bf y}_{ex}(t,x) = 2\cos(6t){\bf g}(x)$ with ${\bf g}(x) = (x_1^3, -3x_1^2x_2)^T$,
$\Phi W(t, x) = 4{\bf g}(x) W_1(t)$ and $p(t,x) = t(x_1^2 + x_2^2 -2/3)$. Hence, we approximate \eqref{eq:random-NS} on $(0,T)\times \mathcal{D}$ for $T=1$ with right-hand side
$${\bf f} = \partial_t {\bf y}_{ex} + \big(({\bf y}_{ex}+\Phi W)\cdot\nabla \big)\big({\bf y}_{ex}+\Phi W)\big) - \nu \Delta {\bf y}_{ex} + \nabla p_{ex}$$ and a Dirichlet boundary condition ${\bf y}|_{\partial \mathcal{D}} = {\bf y}_{ex}$.
The Wiener process $W_1(t)$ is generated on a grid with step size $10^{-8}$, and the time integral $\int_{t_n}^{t_{n+1}}{\bf f}(t)\mathrm{d}t$
which needs to be included in scheme \eqref{scheme:main0} is approximated by a Riemann sum over this fine grid.

In Figure~\ref{fig_conv} (left) we display the approximation errors
$$ \mathbb E\big[\max_{0\le n\le N} \|{\bf y}(t_n)-{\bf y}_n\|_{\mathbb L^2(\mt)^2}^2\big],\quad \tau \sum_{n=1}^N\mathbb{E}\big[ \|\pi_n\|_{\mathbb L^2(\mt)}^2\big],$$
for the velocity and the pressure, respectively for $\tau=0.1, 0.05,0.025,0.0125,0.005, 0.00025$ computed with fixed mesh size $h=1/16, 1/32$, respectively.
We observe convergence rate $3/2$ for the velocity and the pressure approximation even for this problem with Dirichlet boundary datum ${\bf y}_{ex}$ until the spatial errors start to dominate.
For comparison we also display results computed with
the Crank-Nicolson scheme \eqref{scheme:main0} without the correction term $\mathcal{I}^{W^2}_n$ (denoted `no IW2'), where we observe that the convergence rate without the correction term deteriorates.
In Figure~\ref{fig_conv} (right) we display the approximation errors of the Stokes equations computed with the CN scheme with $h=1/16, 1/32$, respectively
(for comparison we also include the error plot for the Navier-Stokes equations with $h=1/16$, denoted `NS' in the figure). We observe the predicted convergence rate $3/2$
until the spatial approximation error becomes dominant.
\begin{figure}
\includegraphics[width=0.45\textwidth]{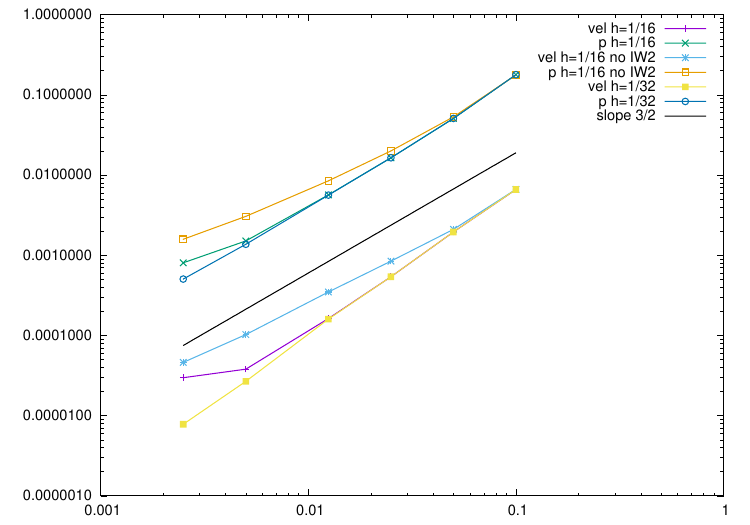}
\includegraphics[width=0.45\textwidth]{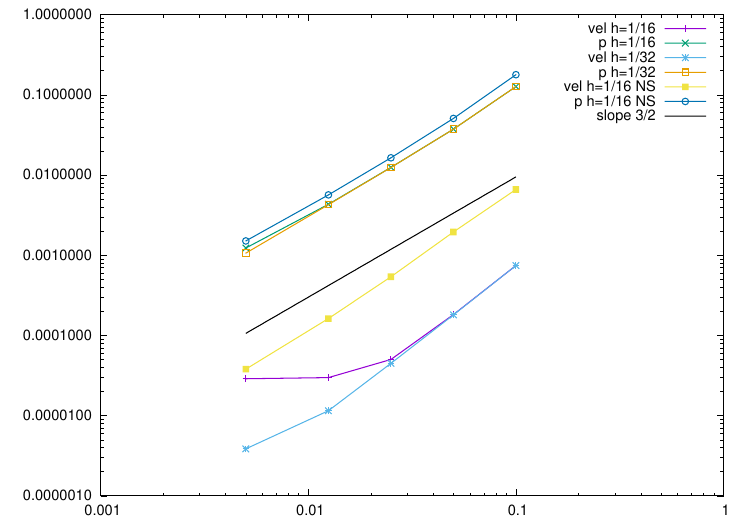}
\caption{Academic Example in Section \ref{academic_example}: approximation error of the CN scheme for the velocity and the pressure for Navier--Stokes equations (left) and Stokes equations (right).}
\label{fig_conv}
\end{figure}
We also compare the CN scheme \eqref{scheme:cn_fem} to the SI, SIS, IE1 variants of the semi-implicit Euler scheme  \eqref{scheme:euler_fem}.
The convergence of the error of the velocity and of the pressure for $\tau=0.1, 0.05,0.025,0.0125,0.005, 0.00025$ with fixed mesh size $h=1/16$ is displayed
in Figure~\ref{fig_conv_euler}. Here we observe a linear convergence of the error for the velocity and the pressure for schemes SIS, IE1.
The rate of the convergence of the SI scheme seems slightly less than linear. The rate for the velocity error of the SI improves for smaller values of $\tau$,
the rate for the pressure error remains suboptimal, which may be attributed to the effect of the boundary condition.
The error of the CN scheme is significantly lower than the error of the semi-implicit Euler schemes; for the largest time step $\tau=0.01$ the velocity error is roughly $2$-times smaller and the pressure error is $2.5$-times
smaller than the respective errors of IE1, which is the best performing variant of the Euler scheme. To achieve similar error as the CN scheme with $\tau=0.005$ (for $h=1/16$)
the IE1 scheme would require the use of roughly $10$-times smaller timestep.
\begin{figure}
\includegraphics[width=0.45\textwidth]{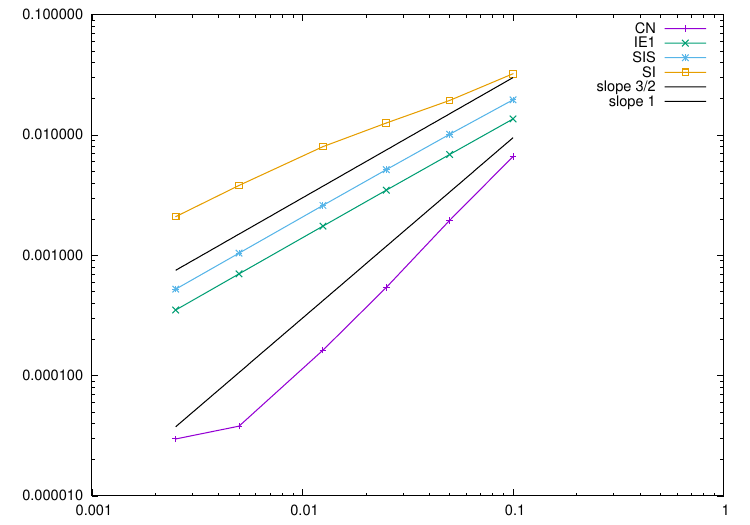}
\includegraphics[width=0.45\textwidth]{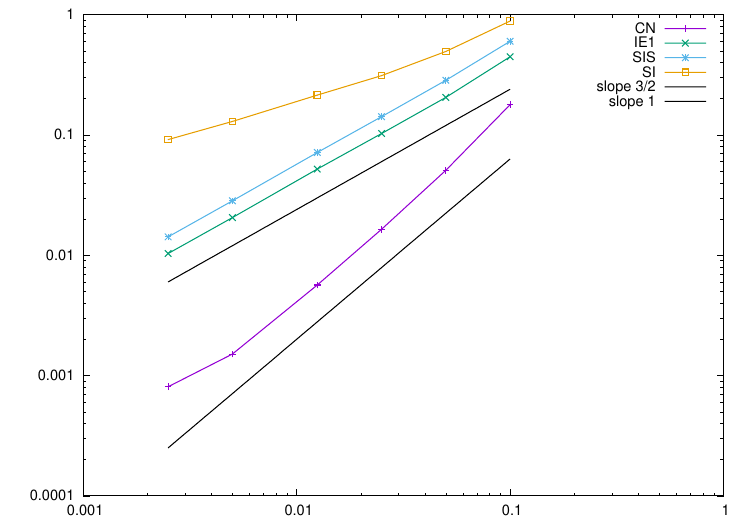}
\caption{Error of the approximation of the velocity (left) and the pressure (right).}
\label{fig_conv_euler}
\end{figure}

\subsection{Lid-driven cavity flow}\label{example_lid_driven}

We test the long-time behavior of scheme \eqref{scheme:cn_fem} using the lid-driven cavity benchmark problem from \cite[Example 5.9]{BPW}.
We take ${\bf v}_0 = {\bf u}_0 = {\bf 0}$, ${\bf f} = {\bf 0}$;
and the noise is taken  to be $\Phi W(t, x) = \mu \sum_{i=1}^4 W_i(t){\bf g}_i(x)$, $\mu>0$
where ${\bf g}_i$ are translated and scaled versions of the vector field $${\bf g}(x) = \mathbf{1}_{\mathcal{D}}(x)\big(\tilde{g}(x_1, x_2), \tilde{g}(x_2, x_1)\big)^T, \quad\tilde{g}(x_1,x_2)= 2x_1^2(1-x_2)^2 x_2(x_2-1)(2x_2-1).$$
We impose a Dirichlet boundary condition ${\bf u}(x) = \mathbf{1}_{\{x_2=1\}}(x)(1,0)^T$ and solve the problem on the time-interval $(0,T)$ with $T=100$.
All presented results are computed with time-step $\tau=0.01$ and mesh size $h=1/16$.
For the given setting the deterministic solution is close to the steady state approximately at the time $t=50$.
In all experiments below the solution computed with the semi-implicit Euler scheme is very similar to the solution of the CN scheme and the corresponding results are therefore not displayed, except for Figure~\ref{fig_euler_path}.
We also note that the differences in computational times for the SI and the CN schemes were negligible in this example.

In Figure~\ref{fig_lid_aver} we display the streamlines of the stationary solution of the deterministic problem (i.e., the solution at the final time $T=100$ for $\mu=0$) computed with the CN scheme
and a time-averaged solution computed with the CN scheme for $\mu=10$ and $\mu=40$ (the time-average is computed over the interval $[50,T]$, the streamlines are colored according to the amplitude of the velocity field).
We only observe minor differences (located close to the bottom of the domain) between the deterministic solution and the solution for $\mu=10$. The solution with the stronger noise $\mu=40$ is significantly different
which highlights the amplified effect of the nonlinear convective term in the stochastic Navier--Stokes equations for stronger noise.
\begin{figure}
\includegraphics[width=0.32\textwidth]{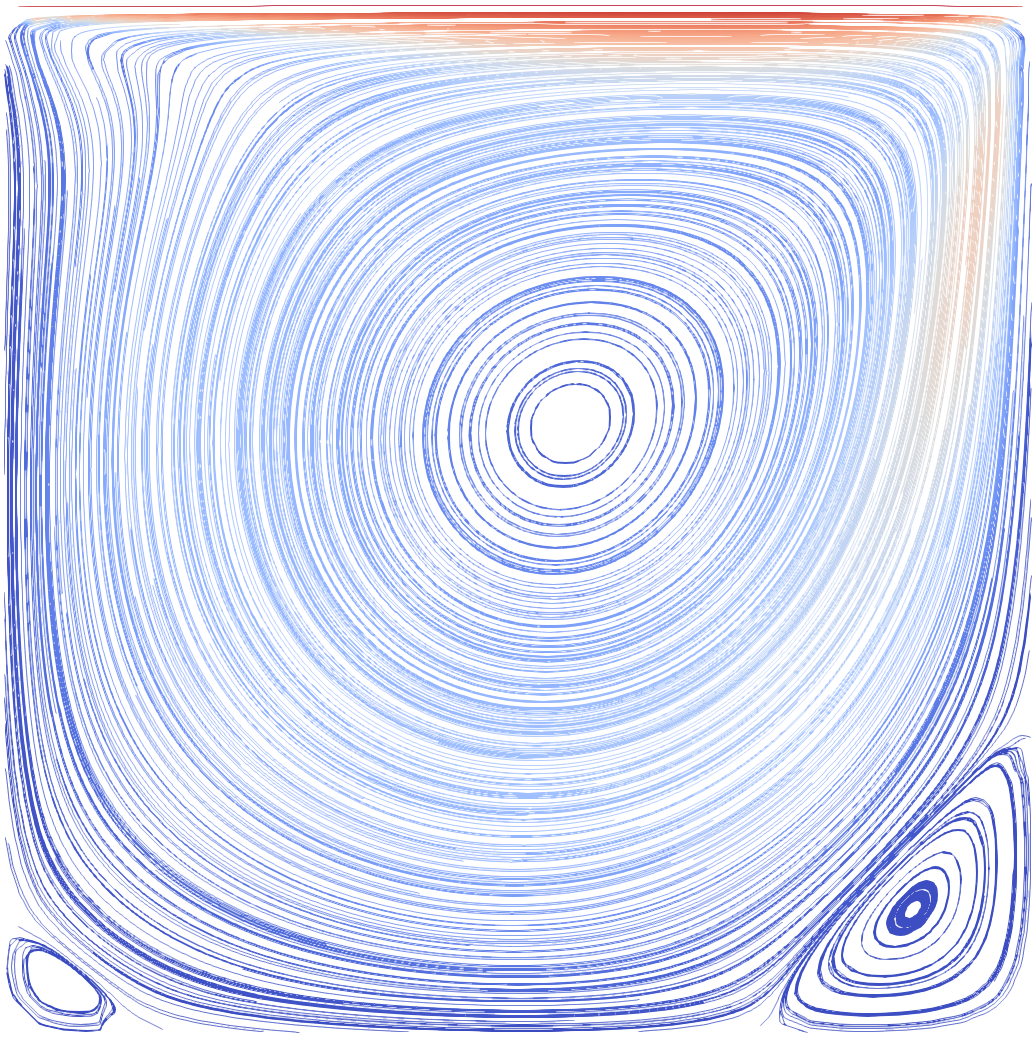}
\includegraphics[width=0.32\textwidth]{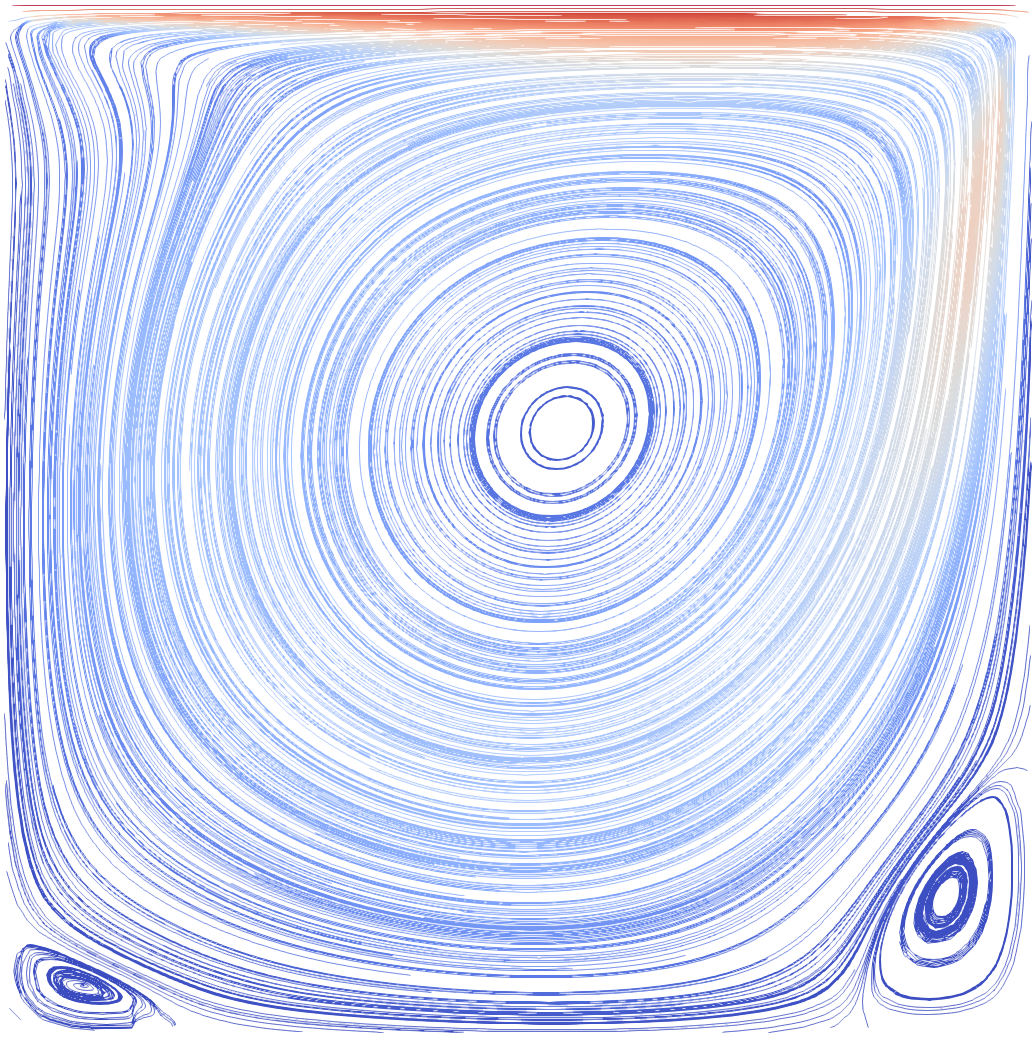}
\includegraphics[width=0.32\textwidth]{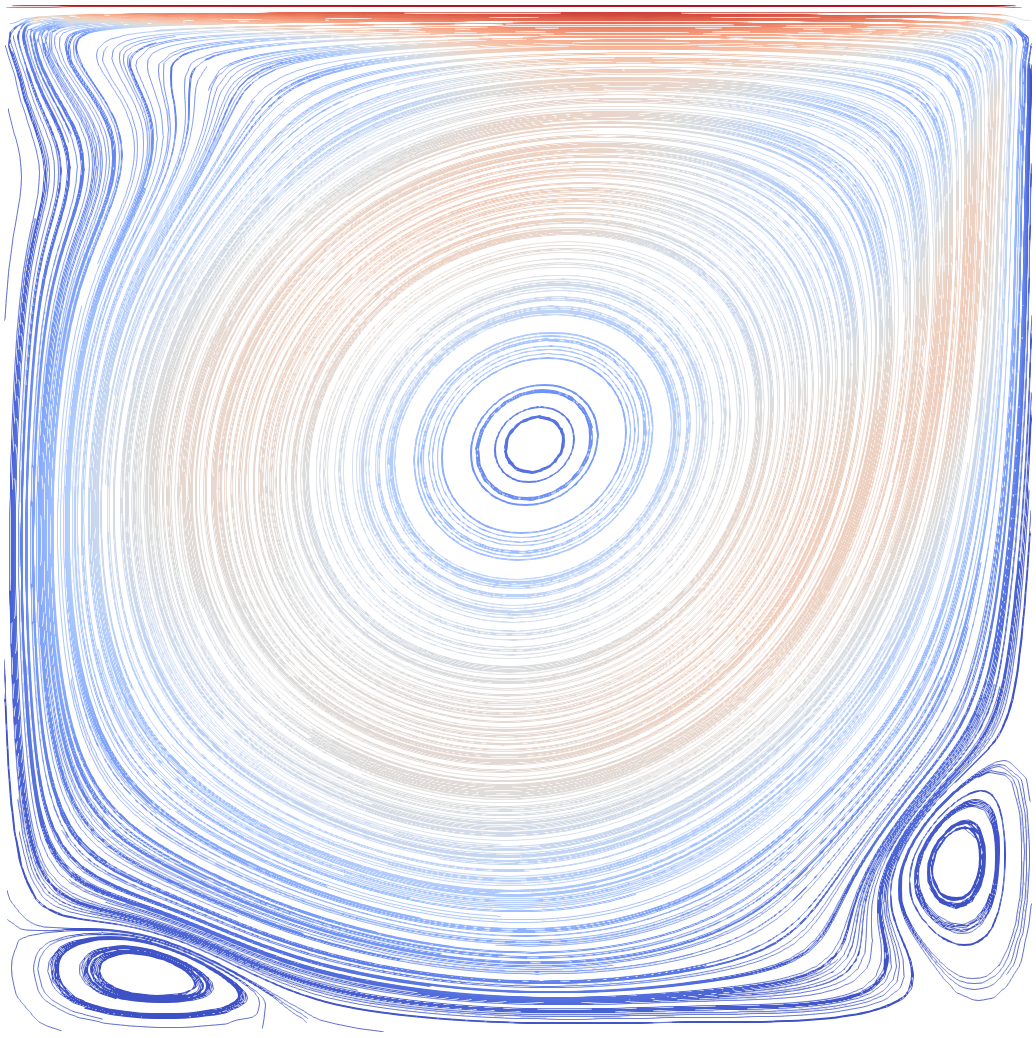}
\caption{Streamlines of the deterministic solution of the CN scheme ($\mu=0$) at $T=100$ (left), streamlines of the time-averaged solution of the CN scheme with $\mu=10$ (middle) and $\mu=40$ (right).}
\label{fig_lid_aver}
\end{figure}


It was Figure~\ref{fig_lid_expval_nu40} in the introduction where the expected value of the numerical solution at $T=20$ was computed over $1000$ realisations of the noise with $\mu=10$ and $\mu=40$, respectively,
along with the deterministic solution ($\mu=0$).
The deterministic solution and the solution for $\mu=10$ are similar, but the solution computed with $\mu=40$ differs significantly.

Figure~\ref{fig_cn_path_nu10} now displays the evolution of one realisation of the noise with $\mu=10$ for the CN scheme, which again differs from the deterministic dynamics here.
\begin{figure}
\includegraphics[width=0.32\textwidth]{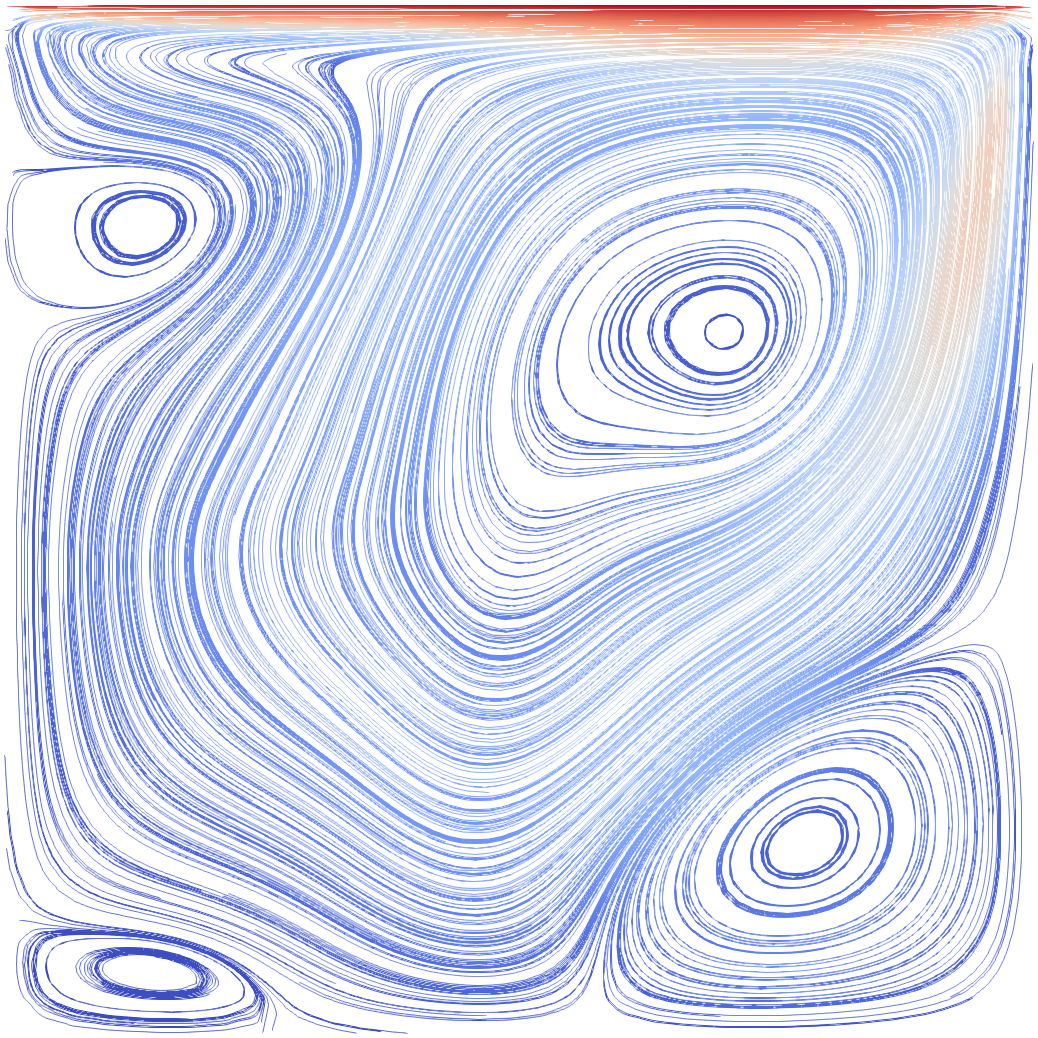}
\includegraphics[width=0.32\textwidth]{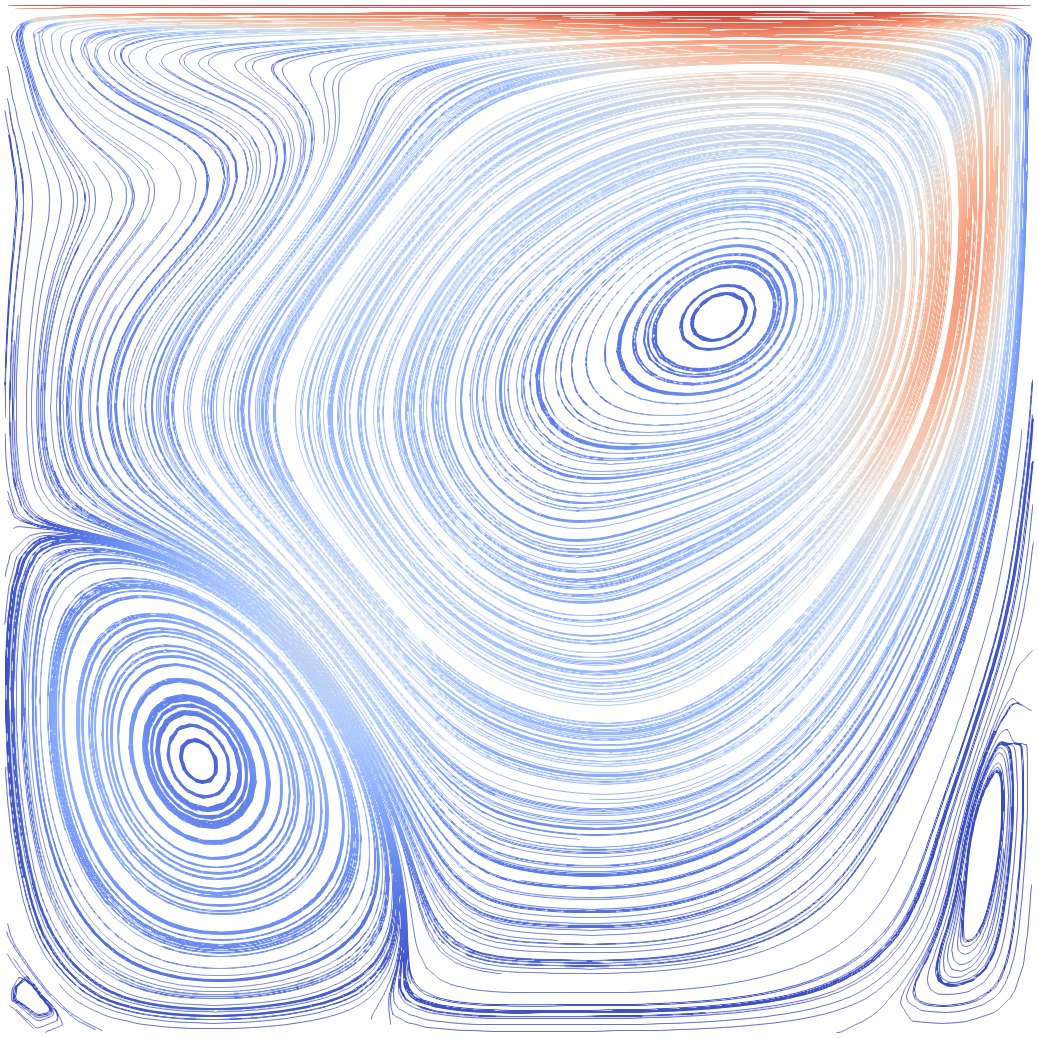}
\includegraphics[width=0.32\textwidth]{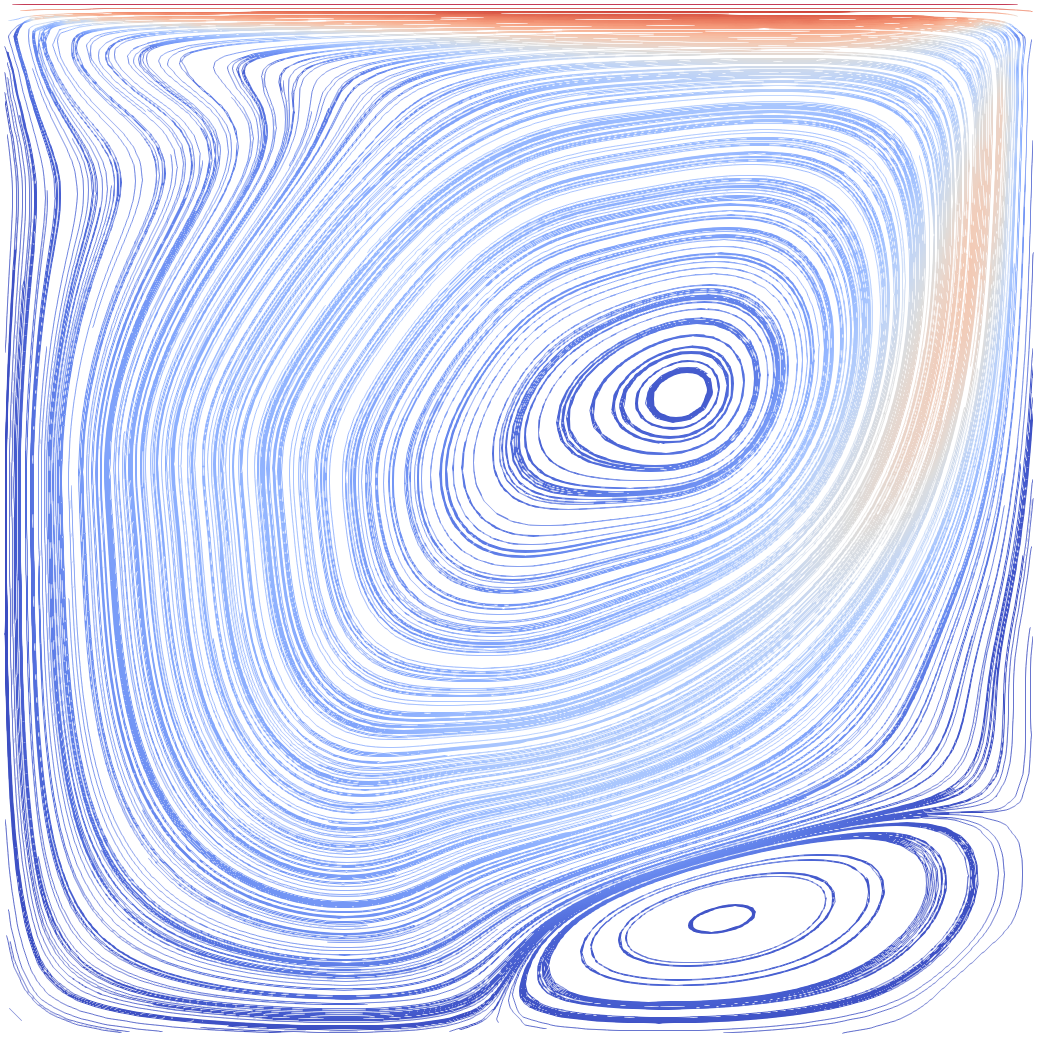}
\caption{Solution of the CN scheme for one realisation of the noise with $\mu=10$ at time $t=10,20,50$.}
\label{fig_cn_path_nu10}
\end{figure}
Even more different is the evolution of this realisation for the stronger noise with $\mu=40$ for the CN scheme and the SI scheme
in Figure~\ref{fig_cn_path} and Figure~\ref{fig_euler_path}, respectively. Both schemes yield qualitatively very similar results.
\begin{figure}
\includegraphics[width=0.32\textwidth]{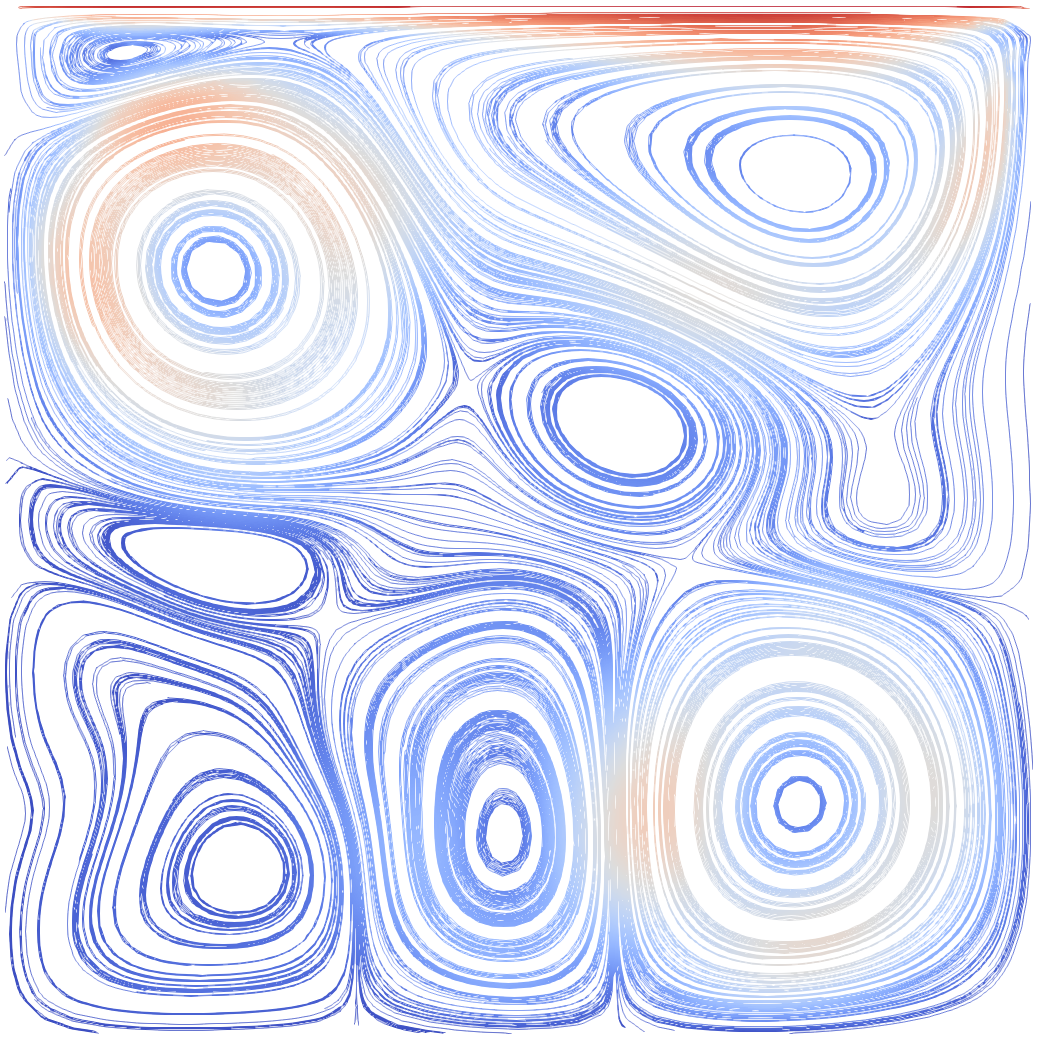}
\includegraphics[width=0.32\textwidth]{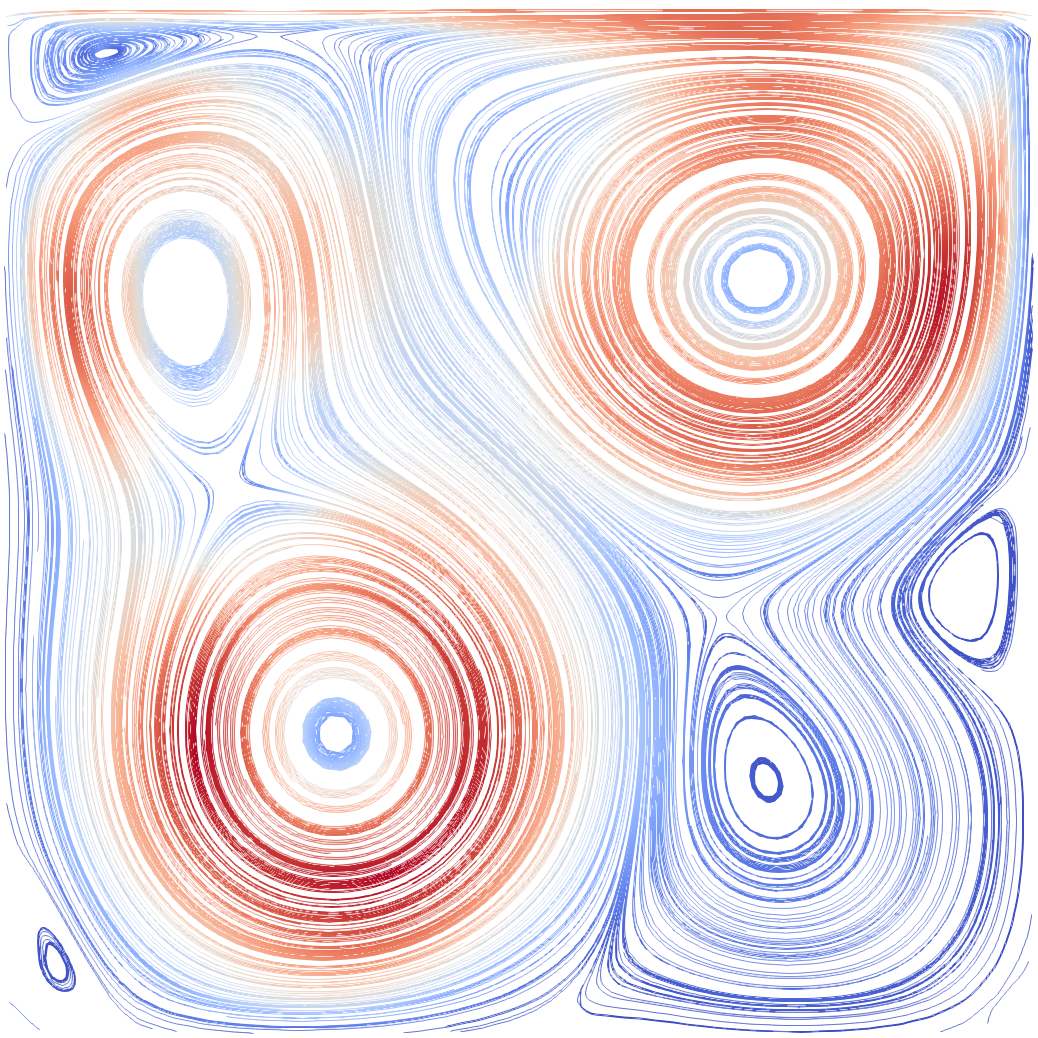}
\includegraphics[width=0.32\textwidth]{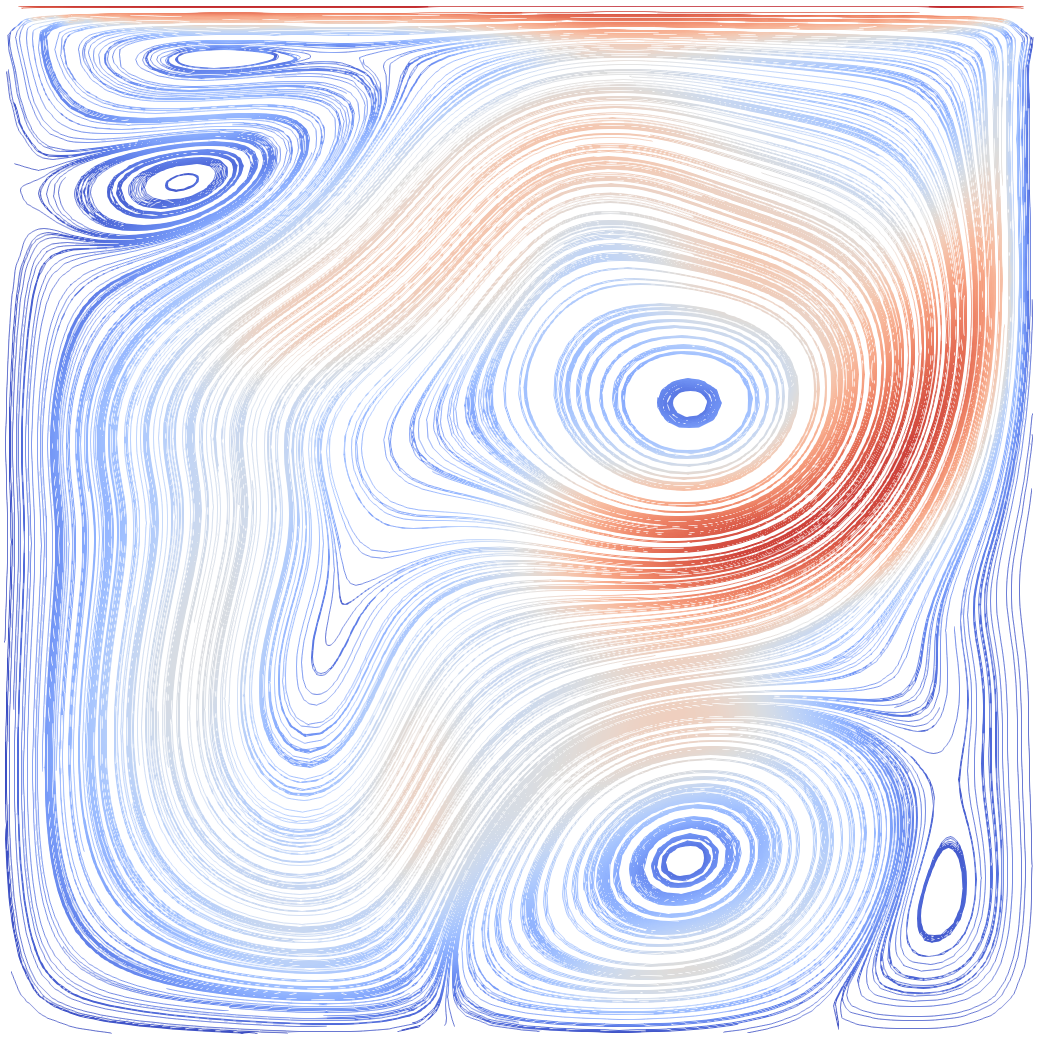}
\caption{Solution of the CN scheme for one realisation of the noise with $\mu=40$ at time $t=10,20,50$.}
\label{fig_cn_path}
\end{figure}

\begin{figure}
\includegraphics[width=0.32\textwidth]{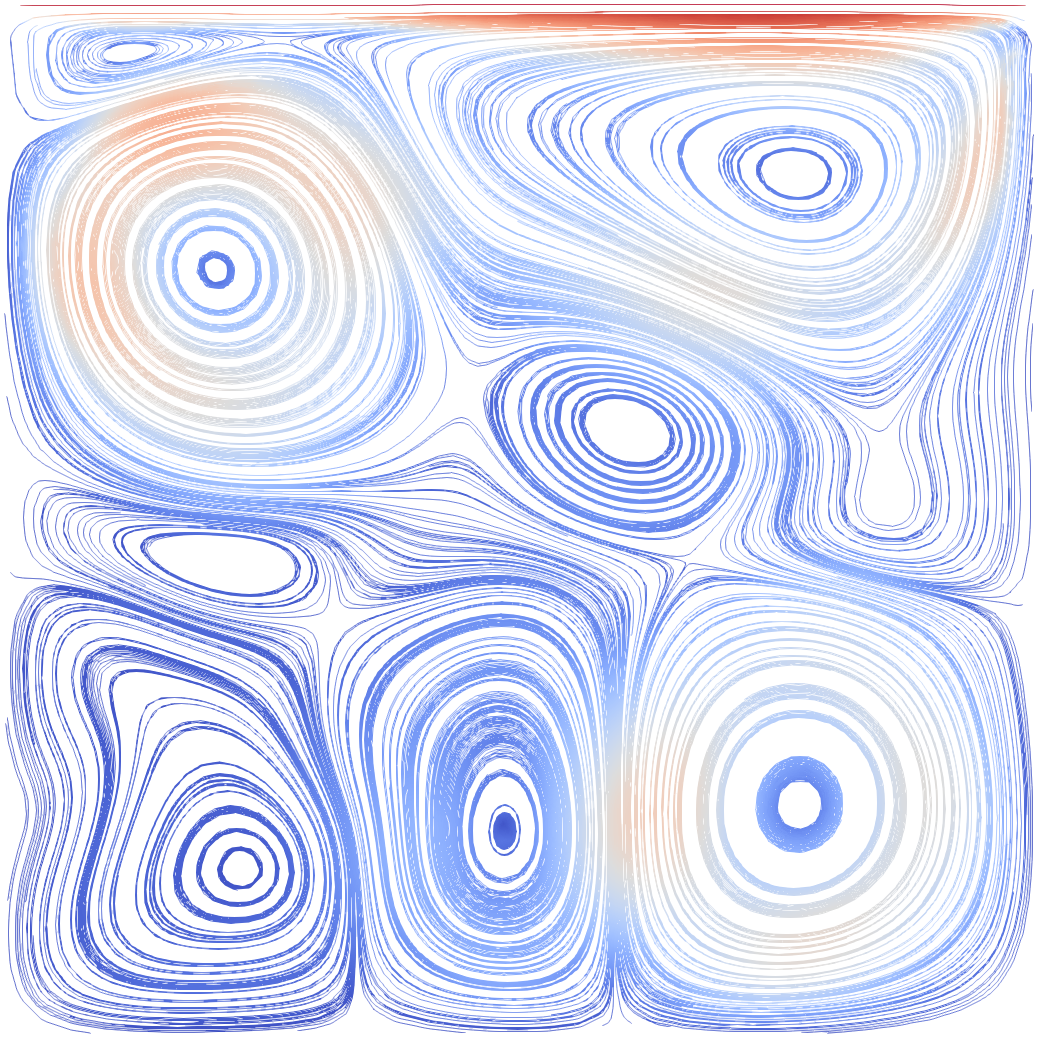}
\includegraphics[width=0.32\textwidth]{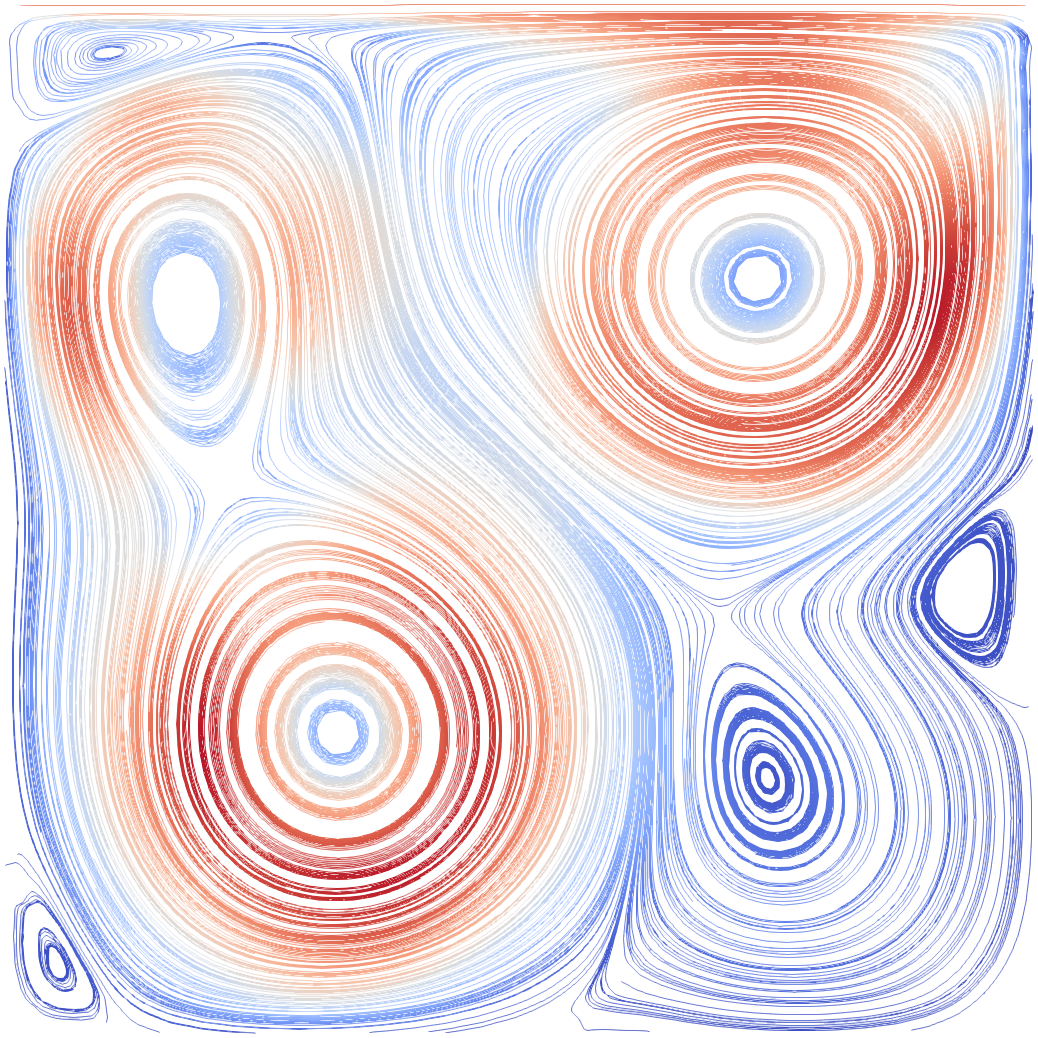}
\includegraphics[width=0.32\textwidth]{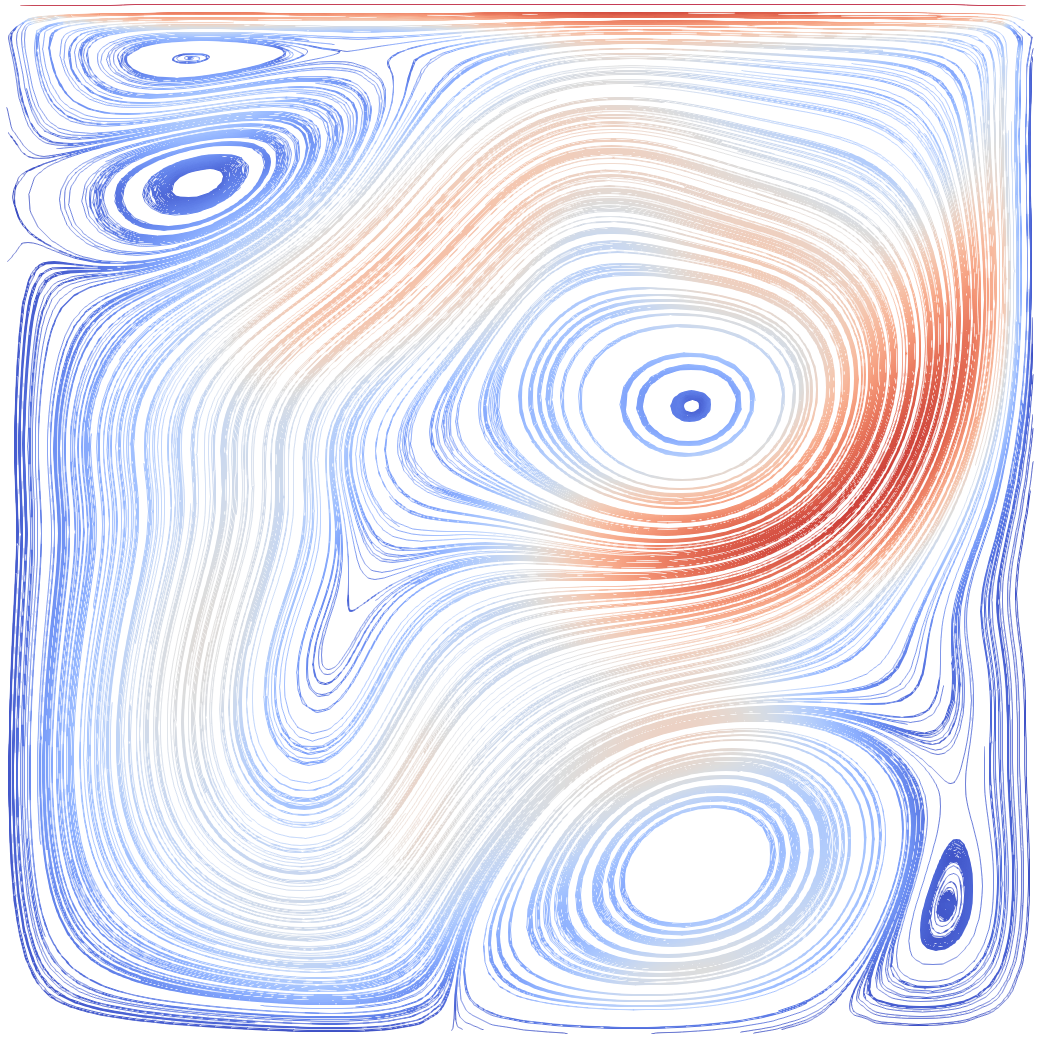}
\caption{Solution of the Euler scheme (SI) for one realisation of the noise with $\mu=40$ at time $t=10,20,50$.}
\label{fig_euler_path}
\end{figure}

\section*{Compliance with Ethical Standards}\label{conflicts}
\smallskip
\par\noindent
{\bf Conflict of Interest}. The author declares that he has no conflict of interest.

\smallskip
\par\noindent
{\bf Data Availability}. Data sharing is not applicable to this article as no datasets were generated or analysed during the current study.

\appendix
\section{Technical results}
\subsection{Peano-kernel proof of the midpoint extrapolation error}\label{app:peano}

\begin{lemma}\label{lem:deltastar-average}
Let $\mathrm{K}$ be a Banach space and let
\[
\delta_\star^{\,n+\frac12}
:= \frac{1}{\tau}\int_{t_{n}}^{t_{n+1}} {\bf y}(s)\,\mathrm{d}s
 \;-\;\Big(\tfrac32\,{\bf y}(t_{n})-\tfrac12\,{\bf y}(t_{n-1})\Big).
\]
Assume ${\bf y}_t\in C^{1/2}\!\big([0,T];\mathrm{K}\big)$, i.e.
\(
\|{\bf y}_t(t)-{\bf y}_t(s)\|_{\mathrm{K}}\le [{\bf y}_t]_{C^{1/2}([0,T]; \mathrm{K})}\,|t-s|^{1/2}
\) for all $s,t\in[0,T]$.
Then there exists a constant $C>0$, independent of $n$ and $\tau$, such that
\[
\boxed{\quad
\|\delta_\star^{\,n+\frac12}\|_{\mathrm{K}}\ \le\ C\,[{\bf y}_t]_{C^{1/2}([0,T]; \mathrm{K})}\,\tau^{3/2}.
\quad}
\]
\end{lemma}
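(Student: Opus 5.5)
The plan is to expand everything around the grid point $t_n$ using the fundamental theorem of calculus for $\mathrm K$-valued functions. Here I read the statement for $n\ge1$, so that $t_{n-1}=t_n-\tau$ is a genuine grid point; this is how it is used in Case 1 of the proof of Lemma~\ref{lem:R-bound}. Since ${\bf y}_t\in C^{1/2}([0,T];\mathrm K)$, ${\bf y}$ is $C^1$ with values in $\mathrm K$, and ${\bf y}(b)-{\bf y}(a)=\int_a^b{\bf y}_t(r)\,\mathrm dr$ holds as a Bochner integral. In Peano-kernel language, the key structural fact is that the linear functional ${\bf y}\mapsto\delta_\star^{\,n+\frac12}$ annihilates affine functions: for ${\bf y}\equiv$ const and for ${\bf y}(t)=t$, both the average over $[t_n,t_{n+1}]$ and the extrapolation $\tfrac32{\bf y}(t_n)-\tfrac12{\bf y}(t_{n-1})$ reproduce the value at the midpoint $t_n+\tau/2$. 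So only the deviation of ${\bf y}_t$ from a constant contributes, and that deviation is controlled by the Hölder seminorm.

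Concretely, I would first write
\[
\frac1\tau\int_{t_n}^{t_{n+1}}{\bf y}(s)\,\mathrm ds-{\bf y}(t_n)=\frac1\tau\int_{t_n}^{t_{n+1}}\int_{t_n}^{s}{\bf y}_t(r)\,\mathrm dr\,\mathrm ds,
\]
and split ${\bf y}_t(r)={\bf y}_t(t_n)+\big({\bf y}_t(r)-{\bf y}_t(t_n)\big)$. The constant part gives exactly $\tfrac\tau2{\bf y}_t(t_n)$. For the remainder, $|r-t_n|\le\tau$ gives the bound
\[
\frac1\tau\int_{t_n}^{t_{n+1}}(s-t_n)\,[{\bf y}_t]_{C^{1/2}}\,\tau^{1/2}\,\mathrm ds\le\tfrac12[{\bf y}_t]_{C^{1/2}}\,\tau^{3/2}.
\]
Second, I would write
\[
\Big(\tfrac32{\bf y}(t_n)-\tfrac12{\bf y}(t_{n-1})\Big)-{\bf y}(t_n)=\tfrac12\int_{t_{n-1}}^{t_n}{\bf y}_t(r)\,\mathrm dr=\tfrac\tau2\,{\bf y}_t(t_n)+\tfrac12\int_{t_{n-1}}^{t_n}\big({\bf y}_t(r)-{\bf y}_t(t_n)\big)\,\mathrm dr,
\]
where the last integral has norm at most $\tfrac12\tau\cdot[{\bf y}_t]_{C^{1/2}}\tau^{1/2}$.

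Subtracting the two expansions, the leading terms $\tfrac\tau2{\bf y}_t(t_n)$ cancel. This cancellation is the only real point of the proof and reflects the exactness on affine functions noted above. The triangle inequality in $\mathrm K$ then yields $\|\delta_\star^{\,n+\frac12}\|_{\mathrm K}\le[{\bf y}_t]_{C^{1/2}([0,T];\mathrm K)}\,\tau^{3/2}$, so $C=1$ suffices.

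No step is genuinely hard; the only care needed is at $n=0$. There the convention ${\bf y}(t_{-1})={\bf y}_0$ breaks the cancellation and leaves only an $O(\tau)$ bound, coming from $\tfrac\tau2{\bf y}_t(0)$. I would therefore state explicitly that the lemma is applied for $n\ge1$, which is consistent with Case 2 in the proof of Lemma~\ref{lem:R-bound}. The same argument, with the fine-mesh Brownian quadrature in place of the exact average, is what gives the trapezoidal-type estimates used for $A^{(y)}_{5,n}$ there.
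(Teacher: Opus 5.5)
Your argument is correct and is essentially the paper's. Both proofs use the fundamental theorem of calculus to write $\delta_\star^{\,n+\frac12}$ as an integral of ${\bf y}_t$ against a zero-mean kernel (exactness on affine ${\bf y}$), subtract a frozen value of ${\bf y}_t$, and bound the remainder by the $C^{1/2}$ seminorm. The only difference is that the paper freezes at the midpoint $t_n+\tau/2$, so each of its pieces $A$ and $B$ is separately $\mathcal O(\tau^{3/2})$, whereas you freeze at $t_n$ and let the two $\mathcal O(\tau)$ leading terms $\tfrac\tau2{\bf y}_t(t_n)$ cancel, which gives the explicit constant $C=1$.

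Your restriction to $n\ge1$ is also right. The paper's proof implicitly uses ${\bf y}(t_n-\tau)$ with $t_n-\tau\in[0,T]$, and at $n=0$ the convention ${\bf y}(t_{-1})={\bf y}_0$ leaves the $\mathcal O(\tau)$ term $\tfrac\tau2{\bf y}_t(0)$, in line with Case 2 of the proof of Lemma~\ref{lem:R-bound}.
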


\begin{proof}
We set $t_0:=t^{n+\frac12}=t_n+\tfrac{\tau}{2}$ and $h:=\tfrac{\tau}{2}$, so that $[t_{n},t_{n+1}]=[t_0-h,t_0+h]$.
We split
\[
\delta_\star^{\,n+\frac12}
= \underbrace{\Big(\frac{1}{\tau}\int_{t_0-h}^{t_0+h} {\bf y}(t)\,\mathrm{d}t - {\bf y}(t_0)\Big)}_{=:A}
\;+\;
\underbrace{\Big({\bf y}(t_0)-\tfrac32\,{\bf y}(t_0-h)+\tfrac12\,{\bf y}(t_0-3h)\Big)}_{=:B}.
\]
We will show $\|A\|_{\mathrm{K}}\le C\, [{\bf y}_t]_{C^{1/2}([0,T]; \mathrm{K})}\,\tau^{3/2}$ and
$\|B\|_{\mathrm{K}}\le C\, [{\bf y}_t]_{C^{1/2}([0,T]; \mathrm{K})}\,\tau^{3/2}$.

\smallskip\noindent\emph{Step 1: the mid--cell averaging defect $A$.}
By using the fundamental theorem of calculus,
\[
{\bf y}(t_0+\theta)-{\bf y}(t_0)=\int_{0}^{\theta} {\bf y}_t(t_0+s)\,\mathrm{d}s
= \theta\,{\bf y}_t(t_0)+\int_{0}^{\theta}\!\big({\bf y}_t(t_0+s)-{\bf y}_t(t_0)\big)\,\mathrm{d}s.
\]
By averaging over the symmetric interval and dividing by $\tau=2h$,
\[
A=\frac{1}{2h}\int_{-h}^{h}\!\big({\bf y}(t_0+\theta)-{\bf y}(t_0)\big)\,\mathrm{d}\theta
= \underbrace{\frac{1}{2h}\int_{-h}^{h}\!\theta\,{\bf y}_t(t_0)\,\mathrm{d}\theta}_{=\,0}
\;+\; \frac{1}{2h}\int_{-h}^{h}\int_{0}^{\theta}\!\big({\bf y}_t(t_0+s)-{\bf y}_t(t_0)\big)\,\mathrm{d}s\,\mathrm{d}\theta.
\]
Hence, by using the $C^{1/2}$ modulus of ${\bf y}_t$ in $\mathrm{K}$ and that
$\int_{0}^{\theta}\!=\ \mathrm{sgn}(\theta)\int_{0}^{|\theta|}$,
\[
\|A\|_{\mathrm{K}}
\;\le\; \frac{1}{2h}\int_{-h}^{h}\int_{0}^{|\theta|}
 [{\bf y}_t]_{C^{1/2}([0,T]; \mathrm{K})}\, s^{1/2}\,\mathrm{d}s\,\mathrm{d}\theta
= \frac{[{\bf y}_t]_{C^{1/2}([0,T]; \mathrm{K})}}{2h}\int_{-h}^{h}\!\frac{2}{3}\,|\theta|^{3/2}\,\mathrm{d}\theta.
\]
We compute the elementary integral:
\[
\int_{-h}^{h}\!|\theta|^{3/2}\,\mathrm{d}\theta
= 2\int_{0}^{h}\theta^{3/2}\,\mathrm{d}\theta
= \frac{4}{5}h^{5/2}.
\]
Therefore,
\[
\|A\|_{\mathrm{K}}
\;\le\; \frac{[{\bf y}_t]_{C^{1/2}([0,T]; \mathrm{K})}}{2h}\cdot \frac{2}{3}\cdot \frac{4}{5}h^{5/2}
= \frac{4}{15}\,[{\bf y}_t]_{C^{1/2}([0,T]; \mathrm{K})}\,h^{3/2}
= C\,[{\bf y}_t]_{C^{1/2}([0,T]; \mathrm{K})}\,\tau^{3/2}.
\]

\smallskip\noindent\emph{Step 2: the BDF2--midpoint defect $B$.}
By using backward integral representations from $t_0$,
\[
{\bf y}(t_0-h)={\bf y}(t_0)-\int_{0}^{h}{\bf y}_t(t_0-s)\,\mathrm{d}s,\qquad
{\bf y}(t_0-3h)={\bf y}(t_0)-\int_{0}^{3h}{\bf y}_t(t_0-s)\,\mathrm{d}s,
\]
we obtain
\[
\begin{aligned}
B
&= {\bf y}(t_0)-\tfrac32\Big({\bf y}(t_0)-\!\int_{0}^{h}{\bf y}_t(t_0-s)\,\mathrm{d}s\Big)
+\tfrac12\Big({\bf y}(t_0)-\!\int_{0}^{3h}{\bf y}_t(t_0-s)\,\mathrm{d}s\Big)\\
&= \int_{0}^{h}{\bf y}_t(t_0-s)\,\mathrm{d}s - \frac12\int_{h}^{3h}{\bf y}_t(t_0-s)\,\mathrm{d}s
= \int_{0}^{3h} w(s)\,{\bf y}_t(t_0-s)\,\mathrm{d}s,
\end{aligned}
\]
where the \emph{Peano kernel}
\[
w(s)=
\begin{cases}
1, & s\in[0,h],\\[2pt]
-\tfrac12, & s\in(h,3h],\\[2pt]
0, & \text{otherwise},
\end{cases}
\qquad\text{satisfies}\quad \int_{0}^{3h}w(s)\,\mathrm{d}s = h-\tfrac12(2h)=0.
\]
By subtract and add ${\bf y}_t(t_0)$ and use the zero moment of $w$:
\[
B=\int_{0}^{3h}\!w(s)\,\big({\bf y}_t(t_0-s)-{\bf y}_t(t_0)\big)\,\mathrm{d}s.
\]
Hence, by the $C^{1/2}$ modulus of ${\bf y}_t$ in $\mathrm{K}$,
\[
\|B\|_{\mathrm{K}}
\le [{\bf y}_t]_{C^{1/2}([0,T]; \mathrm{K})}\int_{0}^{3h}\!|w(s)|\, s^{1/2}\,\mathrm{d}s
= [{\bf y}_t]_{C^{1/2}([0,T]; \mathrm{K})}\!\left(\int_{0}^{h}\! s^{1/2}\,\mathrm{d}s
+\frac12\int_{h}^{3h}\! s^{1/2}\,\mathrm{d}s\right).
\]
We  compute the integrals:
\[
\int_{0}^{h}\! s^{1/2}\,\mathrm{d}s=\frac{2}{3}h^{3/2},\qquad
\frac12\int_{h}^{3h}\! s^{1/2}\,\mathrm{d}s=\frac{1}{3}\big((3h)^{3/2}-h^{3/2}\big).
\]
Therefore,
\[
\|B\|_{\mathrm{K}}\ \le\ C\,[{\bf y}_t]_{C^{1/2}([0,T]; \mathrm{K})}\,h^{3/2}
= C\,[{\bf y}_t]_{C^{1/2}([0,T]; \mathrm{K})}\,\tau^{3/2}.
\]

\smallskip\noindent\emph{Step 3: conclusion.}
By the triangle inequality,
\[
\|\delta_\star^{\,n+\frac12}\|_{\mathrm{K}}\ \le\ \|A\|_{\mathrm{K}}+\|B\|_{\mathrm{K}}
\ \le\ C\,[{\bf y}_t]_{C^{1/2}([0,T]; \mathrm{K})}\,\tau^{3/2}.
\]
This completes the proof.
\end{proof}

\subsection{Approximation of a matrix--valued Brownian triple integral}

Let $W$ be a cylindrical Wiener process let
\[
  \Phi \in L_2(\mathfrak U;L^2(\mt))
\]
be a given coefficient field. 
We then define the $\mathbb{L}^2(\mt)^2$--valued process
\[
  Z(t) := \Phi W(t)\in\mathbb{L}^2(\mt)^2,\qquad t\ge 0.
\]

On a macro--interval $[t_n,t_{n+1}]$ of length $\tau := t_{n+1}-t_n$ we
consider the (space--dependent) matrix--valued triple integral
\[
  B_n
  :=\tau^3\mathcal{Q}_n^{W^2}
  = \int_{t_n}^{t_{n+1}} \int_{t_n}^{t_{n+1}} \int_{t_n}^{t_{n+1}}
      \bigl(Z(t)-Z(s)\bigr)\otimes\bigl(Z(t)-Z(r)\bigr)\,
      \mathrm{d}s\, \mathrm{d}t\, \mathrm{d}r,
\]
which is an element of $\mathbb{L}^2(\mt)^2$. On $[t_n,t_{n+1}]$ we also introduce the fine (Brownian) time mesh
\[
  h := \tau^2,\qquad M := \tau^{-1},\qquad
  t_\ell := t_n+\ell h,\quad \ell=0,\dots,M,
\]
and define the corresponding triple Riemann sum
\[
  B_n^{\mathrm{disc}}
  :=\tau^3\mathcal{I}_n^{W^2}
  = h^3 \sum_{\ell=1}^{M}\sum_{k=1}^{M}\sum_{j=1}^{M}
      \bigl(Z(t_\ell)-Z(t_k)\bigr)\otimes
      \bigl(Z(t_\ell)-Z(t_j)\bigr)
  \;\in\;\mathbb{L}^2(\mt)^4.
\]

\begin{lemma}\label{lem:triple-BM}
There exists a constant $C_\Phi>0$, depending only on
$\|\Phi\|_{L_2(\mathfrak U;L^2(\mt))}$, such that
\[
  \tau^6\,
  \mathbb{E}\big[\|\mathcal{Q}_n^{W^2}-\mathcal{I}_n^{W^2}\|_{{\mathbb{L}^2({\mathbb{T}^2})^4}}^2\big]
  \;=\;
  \mathbb{E}\bigl[\|B_n-B_n^{\mathrm{disc}}\|_{{\mathbb{L}^2({\mathbb{T}^2})^4}}^2\bigr]
  \;\le\; C_\Phi\,\tau^9.
\]
Equivalently,
\[
  \mathbb{E}\big[\|\mathcal{Q}_n^{W^2}-\mathcal{I}_n^{W^2}\|_{{\mathbb{L}^2({\mathbb{T}^2})^4}}^2\big]
  \;\le\; C_\Phi\,\tau^3.
\]
\end{lemma}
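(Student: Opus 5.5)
We reduce the triple integral to a single time integral of a quadratic functional of $Z$ and compare it with its Riemann sum on the fine mesh. Expanding the tensor product and integrating in $s$ and $r$ separately gives
\[
B_n=\tau\int_{t_n}^{t_{n+1}}\bigl(Z(t)-\bar Z\bigr)\otimes\bigl(Z(t)-\bar Z\bigr)\,\mathrm dt,\qquad \bar Z:=\frac1\tau\int_{t_n}^{t_{n+1}}Z(s)\,\mathrm ds=\Phi\mathcal Q_n^W,
\]
and analogously $B_n^{\mathrm{disc}}=\tau\,h\sum_{\ell=1}^M (Z(t_\ell)-\hat Z)\otimes(Z(t_\ell)-\hat Z)$ with $\hat Z:=\Phi\mathcal I_n^W$ (note $hM=\tau$). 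Hence
\[
B_n-B_n^{\mathrm{disc}}=\tau\Bigl(\int_{t_n}^{t_{n+1}} Z\otimes Z\,\mathrm dt-h\sum_\ell Z(t_\ell)\otimes Z(t_\ell)\Bigr)-\tau^2\bigl(\bar Z\otimes\bar Z-\hat Z\otimes\hat Z\bigr),
\]
and it suffices to bound the second moment of each bracket by $C\tau^{7}$.

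\emph{Averages.} Write $\bar Z\otimes\bar Z-\hat Z\otimes\hat Z=(\bar Z-\hat Z)\otimes\bar Z+\hat Z\otimes(\bar Z-\hat Z)$. By \eqref{eq:QWvsIW} (in its $\Phi$-valued form, computed via independence of the coordinates $W_k$ and the Hilbert--Schmidt norm), $\mathbb E\|\bar Z-\hat Z\|^4\le C\tau^6$ by Gaussianity. We also need $\mathbb E\|\bar Z\|^4,\mathbb E\|\hat Z\|^4\le C$. This is the key point: $Z$ itself is not small, only bounded. Cauchy--Schwarz gives $\mathbb E\|\cdot\|^2\le C\tau^3$, so this part contributes $\tau^4\cdot\tau^3=\tau^7\le\tau^9$? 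No: $\tau^4\tau^3=\tau^7$, which is worse than the target $\tau^9$. So I must keep the centered form rather than split this way.

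\emph{Main obstacle and fix.} The naive splitting loses because $B_n$ is of size $\tau^3\cdot\tau$ (the centered increments are $O(\tau^{1/2})$ on an interval of length $\tau$), so only centered quantities should appear. I would therefore write $Z(t)-\bar Z=\delta(t)-\bar\delta$ with $\delta(t):=Z(t)-Z(t_n)$, which leaves both sums invariant. All estimates then involve $\delta$, which satisfies $\mathbb E\|\delta(t)\|^{p}\le C\tau^{p/2}$. Redoing the split gives:
\begin{itemize}
\item the average term $\tau^2\bigl((\bar\delta-\hat\delta)\otimes\bar\delta+\hat\delta\otimes(\bar\delta-\hat\delta)\bigr)$, with second moment at most $\tau^4\cdot\tau^{3}\cdot\tau=\tau^8$, using the fourth-moment versions of \eqref{eq:QWvsIW} and Hölder;
\item the integral term $\tau\int_{t_n}^{t_{n+1}}\bigl(\delta\otimes\delta(t)-\delta\otimes\delta(\lceil t\rceil_h)\bigr)\,\mathrm dt$.
\end{itemize}

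For the integral term, the difference is $\bigl(\delta(t)-\delta(\lceil t\rceil)\bigr)\otimes\delta(t)+\dots$, with factors of size $h^{1/2}=\tau$ and $\tau^{1/2}$. A crude triangle-inequality bound gives $\tau\cdot\tau\cdot\tau\cdot\tau^{1/2}$, so a second moment of order $\tau^7$. To gain more, I would exploit the fact that the fine-mesh errors $\int_{I_\ell}(W(t)-W(t_{\ell}))\,\mathrm dt$ on different fine cells are centered and conditionally orthogonal given the grid values. Writing the error as a sum over $M$ cells of martingale-difference-like terms, the second moment becomes $M$ times the per-cell bound $(\tau\cdot h\cdot h^{1/2}\tau^{1/2})^2$ rather than $M^2$ times it. This yields $\tau^{-1}\cdot\tau^2\tau^4\tau^2\tau=\tau^8$.

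Reaching exactly $\tau^9$ (rather than $\tau^8$) may require the mean-square Riemann-sum error for the quadratic functional, $\mathbb E|\int W^2-h\sum W(t_\ell)^2|^2\lesssim h\tau\cdot\tau$, computed via Wick/Itô isometry. I would try this explicit Gaussian computation first, coordinatewise in $k,k'$ for $W_kW_{k'}$, summing with Hilbert--Schmidt weights. This orthogonality step is where I expect the real difficulty to lie.
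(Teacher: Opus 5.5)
\textbf{There is a genuine gap: a dropped factor of $\tau$, and it alone creates the obstacle you end on.} Your reduction of the triple integral to a single time integral is a good idea, but the powers of $\tau$ in it are wrong. Integrating in $s$ and in $r$ each produces a factor $\tau$, because $\int_{t_n}^{t_{n+1}}(Z(t)-Z(s))\,\mathrm{d}s=\tau\,(Z(t)-\bar Z)$. Hence $B_n=\tau^2\int_{t_n}^{t_{n+1}}(Z-\bar Z)\otimes(Z-\bar Z)\,\mathrm{d}t$. Likewise $h^3M^2=\tau^2h$, so $B_n^{\mathrm{disc}}=\tau^2h\sum_{\ell}(Z(t_\ell)-\hat Z)\otimes(Z(t_\ell)-\hat Z)$. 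This agrees with $B_n^{\mathrm{disc}}=\tau^3\mathcal I_n^{W^2}$ and $\mathcal I_n^{W^2}=\tau\sum_\ell(\cdots)$. The correct identity is
\[
B_n-B_n^{\mathrm{disc}}=\tau^2\Bigl(\int_{t_n}^{t_{n+1}}Z\otimes Z\,\mathrm{d}t-h\sum_{\ell}Z(t_\ell)\otimes Z(t_\ell)\Bigr)-\tau^3\bigl(\bar Z\otimes\bar Z-\hat Z\otimes\hat Z\bigr),
\]
so every second-moment bound you wrote down should carry an extra factor $\tau^2$.

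What you are actually trying to bound by $C\tau^9$ is $\mathbb E\|\tau^{-1}(B_n-B_n^{\mathrm{disc}})\|^2$, and that is impossible. Already for a single nonzero mode, split $W$ on each fine cell into its linear interpolant plus an independent Brownian bridge. This shows $\mathbb E\|\mathcal Q_n^{W^2}-\mathcal I_n^{W^2}\|^2\ge c\,\tau^4$ with $c>0$, i.e.\ $\mathbb E\|\tau^{-1}(B_n-B_n^{\mathrm{disc}})\|^2\ge c\,\tau^8$. So your orthogonality bound $\tau^8$ is already sharp for the quantity you are handling. The Wick/It\^o computation you defer to as ``the real difficulty'' cannot produce $\tau^9$, and as it stands the proposal does not prove the lemma. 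Your centered average bullet also gives $\tau^8$ in your own normalization, and you do not flag this as insufficient.

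\textbf{With the correct prefactors, the crude estimates you already have close the argument; no orthogonality is needed.} Even the uncentered split of the averages that you rejected gives $\tau^6\cdot C\tau^3=C\tau^9$. Now center $Z\mapsto\delta:=Z-Z(t_n)$; this is legitimate because only increments of $Z$ enter $B_n$ and $B_n^{\mathrm{disc}}$. Then:
\begin{itemize}
\item the average bracket contributes $O(\tau^{10})$;
\item the cellwise triangle inequality on the integral bracket gives the $\mathbb L^2(\Omega)$-bound $\tau^2\cdot M\cdot h\cdot C\,h^{1/2}\tau^{1/2}=C\tau^{9/2}$, i.e.\ exactly $\tau^9$. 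Orthogonality would sharpen this to $\tau^{10}$.
\end{itemize}

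This corrected argument is a one-dimensional version of the paper's proof. The paper keeps the triple integral and partitions $[t_n,t_{n+1}]^3$ into $M^3$ boxes of volume $h^3$. It proves $\mathbb E\|F(t,s,r)-F(t',s',r')\|^2\le C\tau\,(|t-t'|+|s-s'|+|r-r'|)$. There, the smallness you get from centering comes from the fact that all increments within one macro-interval are $O(\tau^{1/2})$. It then sums with the triangle inequality, $M^3h^3(\tau h)^{1/2}=\tau^{9/2}$. Your reduction is more transparent and makes visible that the true order of $\mathbb E\|\mathcal Q_n^{W^2}-\mathcal I_n^{W^2}\|^2$ is $\tau^4$.

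One caveat applies equally to your argument and the paper's. $\|a\otimes b\|_{\mathbb L^2(\mathbb T^2)^4}$ is controlled by $\|a\|_{\mathbb L^4}\|b\|_{\mathbb L^4}$, not by $\mathbb L^2$-norms. So the Gaussian moment bounds must be taken in $\mathbb L^4$, which needs e.g.\ $\Phi\in L_2(\mathfrak U;\mathbb H^1(\mathbb T^2)^2)$ rather than only $L_2(\mathfrak U;\mathbb L^2)$.
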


\begin{proof}
By stationarity of Brownian motion we may, without loss of generality,
assume $t_n=0$ and $t_{n+1}=\tau$, and drop the index $n$. Thus we work
on $[0,\tau]$ and define
\[
  B := \int_0^\tau\int_0^\tau\int_0^\tau
        F(t,s,r)\, \mathrm{d}s\, \mathrm{d}t\, \mathrm{d}r,
  \qquad
  F(t,s,r) := (Z(t)-Z(s))\otimes(Z(t)-Z(r)).
\]
On $[0,\tau]$ we use the fine mesh
\[
  h := \tau^2,\qquad M := \tau^{-1},\qquad
  t_\ell := \ell h,\quad \ell=0,\dots,M,
\]
so that $B^{\mathrm{disc}}$ is defined in the same way as above.

\medskip\noindent
\textbf{Step 1: decomposition into micro-boxes.}
We partition $[0,\tau]^3$ into micro-boxes
\[
  R_{\ell k j}
  := (t_{\ell-1},t_\ell]\times(t_{k-1},t_k]\times(t_{j-1},t_j],
  \qquad \ell,k,j=1,\dots,M.
\]
Then
\[
  B = \sum_{\ell,k,j}
        \int_{R_{\ell k j}} F(t,s,r)\, \mathrm{d}s\, \mathrm{d}t\, \mathrm{d}r.
\]
We define the local quadrature error on each micro--box by
\[
  {\bf e }_{\ell k j}
  := \int_{R_{\ell k j}} \bigl(F(t,s,r)-F(t_\ell,t_k,t_j)\bigr)\,
     \mathrm{d}s\, \mathrm{d}t\, \mathrm{d}r
  \;\in\;\mathbb{L}^2(\mt)^4.
\]
Since
\[
  B^{\mathrm{disc}}
  = \sum_{\ell,k,j}\int_{R_{\ell k j}} F(t_\ell,t_k,t_j)\,
      \mathrm{d}s\, \mathrm{d}t\, \mathrm{d}r,
\]
we obtain the exact decomposition
\[
  B-B^{\mathrm{disc}} = \sum_{\ell,k,j} {\bf e }_{\ell k j}.
\]

\medskip\noindent
\textbf{Step 2: Hilbert--space estimate for $F(t,s,r)--F(t',s',r')$.} We now work entirely at the level of $\mathbb{L}^2(\mt)^2$, so that we do not mix space and time variables. We fix $(t,s,r),(t',s',r')\in[0,\tau]^3$ and set
\[
  U := Z(t)-Z(s)\in\mathbb{L}^2(\mt)^2,\qquad
  V := Z(t)-Z(r)\in\mathbb{L}^2(\mt)^2,
\]
\[
  U' := Z(t')-Z(s')\in\mathbb{L}^2(\mr)^2,\qquad
  V' := Z(t')-Z(r')\in\mathbb{L}^2(\mt)^2.
\]
Then
\[
  F(t,s,r) = U\otimes V,\qquad
  F(t',s',r') = U'\otimes V'.
\]
We write
\begin{align*}
  F(t,s,r)-F(t',s',r')
  &= U\otimes V - U'\otimes V'\\
  &= (U-U')\otimes V + U'\otimes(V-V') + (U-U')\otimes(V-V').
\end{align*}
By using the triangle inequality and H\"older's inequality,
\begin{align*}
  \|F(t,s,r)-F(t',s',r')\|_{{\mathbb{L}^2({\mathbb{T}^2})^4}}
  &\le \|U-U'\|_{\mathbb{L}^2(\mt)^2}\,\|V\|_{\mathbb{L}^2(\mt)^2} + \|U'\|_{\mathbb{L}^2(\mt)^2}\,\|V-V'\|_{\mathbb{L}^2(\mt)^2}\\&\qquad
       + \|U-U'\|_{\mathbb{L}^2}\,\|V-V'\|_{\mathbb{L}^2(\mt)^2}.
\end{align*}
We now bound the second moment of this quantity. By
$(x+y+z)^2\le 3(x^2+y^2+z^2)$ and Cauchy--Schwarz,
\begin{align*}
  \mathbb{E}\big[\|F(t,s,r)-F(t',s',r')\|_{{\mathbb{L}^2({\mathbb{T}^2})^4}}^2\big]
  &\le 3\Big(
      \mathbb{E}\big[\|U-U'\|_{\mathbb{L}^2(\mt)^2}^2\|V\|_{\mathbb{L}^2(\mt)^2}^2\big]
     +\mathbb{E}\big[\|U'\|_{\mathbb{L}^2(\mt)^2}^2\|V-V'\|_{\mathbb{L}^2(\mt)^2}^2\big]\\&\qquad
     +\mathbb{E}\big[\|U-U'\|_{\mathbb{L}^2(\mt)^2}^2\|V-V'\|_{\mathbb{L}^2(\mt)^2}^2\big]\Big)\\
  &\le 3\Big(
      (\mathbb{E}\|U-U'\|_{\mathbb{L}^2(\mt)^2}^4)^{1/2}(\mathbb{E}\|V\|_{\mathbb{L}^2(\mt)^2}^4)^{1/2}\\
  &\qquad\quad
     +(\mathbb{E}\|U'\|_{\mathbb{L}^2(\mt)^2}^4)^{1/2}(\mathbb{E}\|V-V'\|_{\mathbb{L}^2(\mt)^2}^4)^{1/2}\\
  &\qquad\quad
     +(\mathbb{E}\|U-U'\|_{\mathbb{L}^2(\mt)^2}^4)^{1/2}(\mathbb{E}\|V-V'\|_{\mathbb{L}^2(\mt)^2}^4)^{1/2}
     \Big).
\end{align*}
Since $Z(t)=\Phi W(t)$ and $\Phi\in L_2(\mathfrak U;L^2(\mt)^2)$,
$Z$ is a Gaussian process in the Hilbert space
$\mathbb{L}^2(\mt)^2$. Standard Gaussian moment
estimates yield
\begin{align*}
  \mathbb{E}\|Z(t)-Z(s)\|^4 &\le C_\Phi\,|t-s|^2,
\end{align*}
with $C_\Phi$ depending only on $\|\Phi\|_{L_2(\mathfrak U;L^2(\mt))}$.
By applying this to the various increments, we obtain
\begin{align*}
  \mathbb{E}\|U\|_{\mathbb{L}^2(\mt)^2}^4 &\le C_\Phi\,|t-s|^2, &
  \mathbb{E}\|U'\|_{\mathbb{L}^2(\mt)^2}^4 &\le C_\Phi\,|t'-s'|^2,\\
  \mathbb{E}\|V\|_{\mathbb{L}^2(\mt)^2}^4 &\le C_\Phi\,|t-r|^2, &
  \mathbb{E}\|V'\|_{\mathbb{L}^2(\mt)^2}^4 &\le C_\Phi\,|t'-r'|^2,
\end{align*}
and
\begin{align*}
  \mathbb{E}\|U-U'\|_{\mathbb{L}^2(\mt)^2}^4
  &= \mathbb{E}\big\|\bigl(Z(t)-Z(s)\bigr)-\bigl(Z(t')-Z(s')\bigr)\big\|_{\mathbb{L}^2(\mt)^2}^4
   \le C_\Phi\,\bigl(|t-t'|+|s-s'|\bigr)^2,\\[0.5ex]
  \mathbb{E}\|V-V'\|_{\mathbb{L}^2(\mt)^2}^4
  &= \mathbb{E}\big\|\bigl(Z(t)-Z(r)\bigr)-\bigl(Z(t')-Z(r')\bigr)\big\|_{\mathbb{L}^2(\mt)^2}^4
   \le C_\Phi\,\bigl(|t-t'|+|r-r'|\bigr)^2.
\end{align*}

Since $t,s,r,t',s',r'\in[0,\tau]$, we have $|t-s|,|t-r|,|t'-s'|,|t'-r'|\le\tau$,
so
\[
  (\mathbb{E}\|U\|_{\mathbb{L}^2(\mt)^2}^4)^{1/2}
  +(\mathbb{E}\|U'\|_{\mathbb{L}^2(\mt)^2}^4)^{1/2}
  +(\mathbb{E}\|V\|_{\mathbb{L}^2(\mt)^2}^4)^{1/2}
  +(\mathbb{E}\|V'\|_{\mathbb{L}^2(\mt)^2}^4)^{1/2}
  \le C_\Phi\,\tau,
\]
and
\[
  (\mathbb{E}\|U-U'\|_{\mathbb L^2(\mt)^2}^4)^{1/2}
  \le C_\Phi\bigl(|t-t'|+|s-s'|\bigr),\qquad
  (\mathbb{E}\|V-V'\|_{\mathbb{L}^2(\mt)^2}^4)^{1/2}
  \le C_\Phi\bigl(|t-t'|+|r-r'|\bigr).
\]
Plugging these bounds back into the previous estimate yields
\begin{align}\label{eq:F-diff-bound}
  \mathbb{E}\|F(t,s,r)-F(t',s',r')\|_{{\mathbb{L}^2({\mathbb{T}^2})^2}}^2
  &\le C_\Phi\,\tau\bigl(|t-t'|+|s-s'|+|r-r'|\bigr),
\end{align}
for all $(t,s,r),(t',s',r')\in[0,\tau]^3$.

\medskip\noindent
\textbf{Step 3: application on each micro--box.}
Fix $\ell,k,j$ and take $(t',s',r')=(t_\ell,t_k,t_j)$. For
$(t,s,r)\in R_{\ell k j}$ we have $|t-t_\ell|\le h$, $|s-t_k|\le h$,
$|r-t_j|\le h$, hence
\[
  |t-t'|+|s-s'|+|r-r'|
  \le 3h.
\]
Therefore, by \eqref{eq:F-diff-bound},
\[
  \mathbb{E}\|F(t,s,r)-F(t_\ell,t_k,t_j)\|_{{\mathbb{L}^2({\mathbb{T}^2})^4}}^2
  \le C_\Phi\,\tau h
  = C_\Phi\,\tau^3,
  \qquad (t,s,r)\in R_{\ell k j},
\]
since $h=\tau^2$.

\medskip\noindent
\textbf{Step 4: $L^2$--bound for the local error ${\bf e }_{\ell k j}$.}
By Jensen's inequality on the box $R_{\ell k j}$ (which has volume
$|R_{\ell k j}|=h^3$),
\begin{align*}
  \|{\bf e }_{\ell k j}\|_{{\mathbb{L}^2({\mathbb{T}^2})^4}}^2
  &= \Bigl\|\int_{R_{\ell k j}}
         \bigl(F(t,s,r)-F(t_\ell,t_k,t_j)\bigr)\,
         \mathrm{d}s\, \mathrm{d}t\, \mathrm{d}r\Bigr\|_{{\mathbb{L}^2({\mathbb{T}^2})^4}}^2
 \\& \le |R_{\ell k j}|\int_{R_{\ell k j}}
         \|F(t,s,r)-F(t_\ell,t_k,t_j)\|_{{\mathbb{L}^2({\mathbb{T}^4})^2}}^2\,
         \mathrm{d}s\, \mathrm{d}t\, \mathrm{d}r.
\end{align*}
Taking expectations and using the bound above gives
\[
  \mathbb{E}\|{\bf e }_{\ell k j}\|_{{\mathbb{L}^2({\mathbb{T}^2})^4}}^2
  \le h^3\int_{R_{\ell k j}} C_\Phi\,\tau^3\,
        \mathrm{d}s\, \mathrm{d}t\, \mathrm{d}r
  = C_\Phi\,h^6\tau^3.
\]
Recalling $h=\tau^2$, we obtain
\begin{equation}\label{98976}
  \mathbb{E}\|{\bf e }_{\ell k j}\|_{{\mathbb{L}^2({\mathbb{T}^2})^4}}^2
  \le C_\Phi\,\tau^{15}.
\end{equation}

\medskip\noindent
\textbf{Step 5: summation over all micro--boxes.}
There are $M^3 = \tau^{-3}$ boxes. By using the triangle inequality in
$\mathbb{L}^2(\Omega;\mathbb{L}^2(\mt)^{4})$,
\begin{align*}
  \|B-B^{\mathrm{disc}}\|_{\mathbb{L}^2(\Omega;{\mathbb{L}^2({\mathbb{T}^2})^4})}
 & = \Big\|\sum_{\ell,k,j}{\bf e }_{\ell k j}\Big\|_{\mathbb{L}^2(\Omega;{\mathbb{L}^2({\mathbb{T}^2})^4})}
  \le \sum_{\ell,k,j}\|{\bf e }_{\ell k j}\|_{\mathbb{L}^2(\Omega;{\mathbb{L}^2({\mathbb{T}^2})^4})}
 \\& = \sum_{\ell,k,j}\sqrt{\mathbb{E}\|{\bf e }_{\ell k j}\|_{{\mathbb{L}^2({\mathbb{T}^2})^4}}^2}.
\end{align*}
From \eqref{98976},
\[
  \sqrt{\mathbb{E}\|{\bf e }_{\ell k j}\|_{{\mathbb{L}^2({\mathbb{T}^2})^4}}^2}
  \le C_\Phi^{1/2}\,\tau^{15/2},
\]
hence
\[
  \sum_{\ell,k,j}\sqrt{\mathbb{E}\|{\bf e }_{\ell k j}\|_{{\mathbb{L}^2({\mathbb{T}^2})^4}}^2}
  \le M^3 C_\Phi^{1/2}\,\tau^{15/2}
  = \tau^{-3} C_\Phi^{1/2}\,\tau^{15/2}
  = C_\Phi^{1/2}\,\tau^{9/2}.
\]
By squaring both sides, we obtain
\[
  \mathbb{E}\|B-B^{\mathrm{disc}}\|_{{\mathbb{L}^2({\mathbb{T}^2})^2}}^2
  \le C_\Phi\,\tau^9.
\]
Finally, since $B=\tau^3\mathcal{Q}_n^{W^2}$ and
$B^{\mathrm{disc}}=\tau^3\mathcal{I}_n^{W^2}$, we have
\[
  B-B^{\mathrm{disc}}
  = \tau^3\big(\mathcal{Q}_n^{W^2}-\mathcal{I}_n^{W^2}\big),
\]
and therefore
\[
  \mathbb{E}\big[\|B-B^{\mathrm{disc}}\|_{{\mathbb{L}^2({\mathbb{T}^2})^4}}^2\big]
  = \tau^6\,\mathbb{E}\big[\|\mathcal{Q}_n^{W^2}-\mathcal{I}_n^{W^2}\|_{{\mathbb{L}^2({\mathbb{T}^2})^4}}^2\big]
  \le C_\Phi\,\tau^9.
\]
Dividing by $\tau^6$ gives
\[
  \mathbb{E}\big[\|\mathcal{Q}_n^{W^2}-\mathcal{I}_n^{W^2}\|_{{\mathbb{L}^2({\mathbb{T}^2})^4}}^2\big]
  \le C_\Phi\,\tau^3,
\]
which proves the lemma.
\end{proof}

\end{document}